\documentclass[12pt,reqno]{amsart}
\usepackage[letterpaper,margin=1.2in]{geometry}

\usepackage{amsmath,amssymb,amsthm,verbatim, xcolor}
\usepackage{hyperref}

\newtheorem{theorem}[subsection]{Theorem}
\newtheorem{lemma}[subsection]{Lemma}
\newtheorem{cor}[subsection]{Corollary}
\newtheorem{prop}[subsection]{Proposition}

\newtheorem{ass}[subsection]{Assumptions}

\theoremstyle{definition}
\newtheorem{definition}[subsubsection]{Definition}
\newtheorem{remark}[subsection]{Remark}
\newtheorem{example}[subsection]{Example}

\newcommand{\haus}{\mathcal{H}}

\newcommand{\spt}{\mathrm{spt}}

\newcommand{\reg}{\mathrm{reg}}

\newcommand{\graph}{\mathrm{graph}}
\newcommand{\tr}{\mathrm{tr}}

\newcommand{\eps}{\epsilon}

\newcommand{\sing}{\mathrm{sing}}
\newcommand{\cF}{\mathcal{F}}
\newcommand{\R}{\mathbb{R}}
\newcommand{\bC}{\mathbf{C}}

\newcommand{\del}{\partial}

\newcommand{\cG}{\mathcal{G}}
\newcommand{\geucl}{g_{eucl}}

\newcommand{\cH}{\mathcal{H}}

\newcommand{\cE}{\mathcal{E}}

\title[Symmetric (log-)epiperimetric]{The symmetric (log-)epiperimetric inequality and a decay-growth estimate}
\author{Nick Edelen, Luca Spolaor, Bozhidar Velichkov}

\numberwithin{equation}{section}

\begin{document}

\begin{abstract}
We introduce a symmetric (log-)epiperimetric inequality, generalizing the standard epiperimetric inequality, and we show that it implies a growth-decay for the associated energy: as the radius increases energy decays while negative and grows while positive.  One can view the symmetric epiperimetric inequality as giving a log-convexity of energy, analogous to the 3-annulus lemma or frequency formula.  We establish the symmetric epiperimetric inequality for some free-boundary problems and almost-minimizing currents, and give some applications including a ``propagation of graphicality'' estimate, uniqueness of blow-downs at infinity, and a local Liouville-type theorem.
\end{abstract}

\maketitle

\section{Introdution}\label{sec:intro}

We consider the decay-growth properties of certain functions $u$ defined on $\R^n$.  Typically when $u$ solves (or almost solves) a second-order elliptic PDE, $u$ will admit a Fourier-type decomposition and/or a monotone frequency function and/or a 3-annulus-type lemma.  Any of these imply a decay-growth estimate for a suitably scaled $u_r := r^{-m} u(r \cdot)$ (or, often, the scale-invariant radial derivative $r \del_r u_r$): there is a threshold $r_0$ so that, as the radius $r$ increases, the norm $||u_r||_{L^2(\del B_1)}$ will exhibit polynomial or logarithmic decay in $r$ while $r \leq r_0$, and polynomial or logarithmic growth in $r$ while $r \geq r_0$.

In this paper we introduce a \emph{symmetric (log-)epiperimetric inequality} for an energy functional $\cE: H^1(B_1) \to \R$, which encompasses both the usual epiperimetric inequality and a new ``reverse'' epiperimetric inequality.  We show that when $u$ and $\cE$ satisfy the symmetric epiperimetric inequality and a fairly universal monotonicity-type formula, then both $\cE(u_r)$ and $r \del_r u_r$ exhibit a decay-growth estimate as above.  In fact we will characterize the threshold radius $r_0$ as the radius at which the associated energy $\cE(u_r)$ changes sign.

As an application, we prove the symmetric epiperimetric inequality for some free-boundary-type variational problems and for almost-minimizing currents.  The main consequence is a ``quantitative uniqueness of tangent cone'' which allows one to propagate graphicality or closeness near a smooth cone in regions where the energy stays close (see Theorems \ref{thm:ac-param}, \ref{cor:o-unique}, \ref{thm:am-param}, and Example \ref{ex:ac}).  This implies uniqueness of blow-ups and blow-downs, but is particularly useful in blow-up arguments to control transition regions between scales.  For example, using our estimates we can prove some ``local Liouville'' theorems for minimizers of the Alt-Caffarelli functional and almost-minimizing currents (Theorems \ref{thm:ac-lious}, \ref{thm:am-lious}).

Epiperimetric-type inequalities have already found extensive use in free-boundary type problems (\cite{We}, \cite{SpVe}, \cite{cospve1}, \cite{EnSpVe}, \cite{SpVe2}) and minimal surface theory (\cite{Re}, \cite{Ta}, \cite{Wh}, \cite{EnSpVe2}) to prove uniqueness of blow up and decay estimates near singularities.  A key advantage of the symmetric epiperimetric inequality is that it can be applied in an annulus \emph{without} a priori knowledge of the singular behavior (compare \cite[Appendix 1]{Ed}).  

\vspace{3mm}


We observe here that many solutions $u \in H^1(B_R \subset \R^n)$ to variational problems involving an energy $\cE : H^1(B_1) \to [-1, 1]$ satisfy the following two properties:
\begin{enumerate}
\item \emph{Monotonicity}: writing $u_r(x) = r^{-m} u(rx)$ and $z_r$ for the $m$-homogenous extension of $u_r$, then for a.e. $r \in (\rho, R)$:
\begin{equation}\label{eqn:intro-mono}
\frac{d}{dr} \cE(u_r) \geq \frac{c_E}{r}(\cE(z_r) - \cE(u_r)) + \frac{c_E}{r} \int_{\del B_1} r^2 |\del_r u_r|^2 d\haus^{n-1}
\end{equation}
for some fixed $c_E > 0$, $m \in (2-n, \infty)$, $\rho \in [0, R]$.
\item \emph{Symmetric log-epiperimetric inequality}: for a.e. $r \in (\rho, R)$:
\begin{equation}\label{eqn:intro-epi}
\cE(u_r) \leq \cE(z_r) - \eps |\cE(z_r)|^{1+\gamma},
\end{equation}
for some $\gamma \in [0, 1), \eps \in (0, 1]$.
\end{enumerate}
When $\cE(z_r) \geq 0$, then \eqref{eqn:intro-epi} reduces to the usual (log-)epiperimetric inequality.  However, unlike standard (log-)epiperimetric, because of the absolute values, \eqref{eqn:intro-epi} can control $\cE(u_r)$ even when $\cE(z_r) \leq 0$.  We call \eqref{eqn:intro-epi} ``symmetric'' because it does not care about the sign of $\cE(z_r)$, or the direction of $r$ (increasing or decreasing).  For comparison, we might call \eqref{eqn:intro-epi} with $(-\cE(z_r))$ in place of $|\cE(z_r)|$ a ``reverse'' epiperimetric-type inequality.

We show in Theorem \ref{thm:main} that the above two properties guarantee a decay-growth estimate, which stated informally says the following.
\begin{theorem}\label{thm:teaser}
In the notation of the above, as $r$ increases from $\rho$ to $R$, then $\cE(u_r)$ is increasing, and both $\cE(u_r)$ and $||r \del_r u_r||_{L^2(\del B_1)}$ polynomially/logarithmically decay while $\cE(u_r) \leq 0$, and polynomially/logarithmically grow while $\cE(u_r) \geq 0$.  The nature of growth (polynomial or logarithmic) depends on whether $\gamma = 0$ or $\gamma > 0$ (resp.).

If $\rho = 0$ (resp. $R = \infty$), then $u_r$ has an $L^2(\del B_1)$ limit as $r \to 0$ (resp. $r \to \infty$).  If $\rho = 0$ \emph{and} $R = \infty$, then $\del_r u_r \equiv 0$.
\end{theorem}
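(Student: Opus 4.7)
Let $f(r) := \cE(u_r)$ and $g(r) := \cE(z_r)$. Substituting $g - f \ge \eps |g|^{1+\gamma} \ge 0$ from \eqref{eqn:intro-epi} into \eqref{eqn:intro-mono} gives
$$ f'(r) \ge \frac{c_E \eps}{r}|g(r)|^{1+\gamma} + \frac{c_E}{r}\|r\del_r u_r\|_{L^2(\del B_1)}^2 \ge 0, $$
so $f$ is monotone nondecreasing, proving the first assertion and singling out a sign-change threshold $r_0 \in [\rho, R]$ with $f \le 0$ on $(\rho, r_0)$ and $f \ge 0$ on $(r_0, R)$.

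The main task is then to convert this into a closed ODI for $f$ in each sign regime. In the positive regime $r > r_0$ this is immediate: $g \ge f \ge 0$ gives $|g|^{1+\gamma} \ge f^{1+\gamma}$, whence $f'(r) \ge (c_E \eps/r) f(r)^{1+\gamma}$. In the negative regime $r < r_0$ I would split cases on the sign of $g$. If $g \ge 0 \ge f$, then $g - f \ge |f|$ directly; if $g \le 0$, then $|g| \le |f|$ (since $g \ge f$), and a further split on $|g| \le |f|/2$ (giving $g - f \ge |f|/2$) versus $|g| \ge |f|/2$ (giving $g - f \ge \eps (|f|/2)^{1+\gamma}$ from \eqref{eqn:intro-epi}) shows $g - f \ge c \min(|f|, |f|^{1+\gamma})$. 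Since $|f| \le 1$, every case yields $f'(r) \ge (c_0/r)|f(r)|^{1+\gamma}$ for some $c_0 = c_0(c_E,\eps,\gamma) > 0$. Integrating this ODI by separation of variables in each regime then produces the claimed rates: for $\gamma = 0$ the quantity $|f(r)| r^{\mp c_0}$ is monotone (polynomial rate); for $\gamma > 0$ the quantity $|f(r)|^{-\gamma} \pm c_0 \gamma \log r$ is monotone (logarithmic rate), with the sign conventions opposite in the positive and negative regimes.

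The kinetic term in \eqref{eqn:intro-mono} gives $\int_\rho^R \|t\del_t u_t\|_{L^2(\del B_1)}^2 \frac{dt}{t} \le c_E^{-1}(f(R) - f(\rho)) \le 2/c_E$; restricted to a dyadic annulus this reads $\int_r^{2r} \|t\del_t u_t\|^2 \frac{dt}{t} \le c_E^{-1}(f(2r) - f(r))$, and combined with the decay/growth of $|f|$ from the previous step yields the corresponding $L^2$-averaged decay/growth of $\|r\del_r u_r\|_{L^2(\del B_1)}$. For the $L^2$-limit at $r \to \rho$ (or $r \to R$), I would combine Cauchy--Schwarz $\|u_s - u_r\|_{L^2(\del B_1)} \le (\log(s/r))^{1/2} \bigl(\int_r^s \|t\del_t u_t\|^2 dt/t\bigr)^{1/2}$ with a dyadic grouping argument: the explicit decay $|f(r)| \lesssim r^{c_0}$ (resp. $|f(r)| \lesssim (\log 1/r)^{-1/\gamma}$) makes the relevant square-root tail $\sum_k \sqrt{f(2^{-k}R) - f(2^{-k-1}R)}$ converge for every $\gamma \in [0,1)$. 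Finally, when $\rho = 0$ and $R = \infty$, fixing $r > 0$ in the negative-regime monotone identity $|f(r)| r^{c_0} \le |f(r_*)| r_*^{c_0}$ and letting $r_* \to 0$ using $|f(r_*)| \le 1$ forces $f \equiv 0$ on the negative regime; the symmetric argument on the positive side gives $f \equiv 0$ globally, and then \eqref{eqn:intro-mono} immediately yields $\del_r u_r \equiv 0$.

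The main obstacle I anticipate is the subcase $f \le g \le 0$ with $|g|$ comparable to $|f|$: the standard (non-symmetric) epiperimetric inequality---which presumes $g \ge 0$---is useless there, and it is precisely the symmetric form \eqref{eqn:intro-epi} that extracts the lower bound $|g|^{1+\gamma} \gtrsim |f|^{1+\gamma}$ driving the decay. A secondary technical point is the dyadic Cauchy-sum argument when $\gamma$ is close to $1$: the logarithmic decay rate $|f(r)| \lesssim (\log 1/r)^{-1/\gamma}$ is only barely good enough, and a block/Cauchy--Schwarz estimate (rather than direct telescoping of $\sum \sqrt{b_k}$) is needed to close the argument uniformly in $\gamma \in [0,1)$.
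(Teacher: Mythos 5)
Your proof is correct and follows essentially the same route as the paper's (the teaser is a corollary of the precise Theorem~\ref{thm:main}, and your steps line up with its proof): substitute the symmetric epiperimetric bound into the monotonicity formula to see $\cE(u_r)$ is increasing, run a case analysis on the signs of $\cE(u_r)$ and $\cE(z_r)$ to close the differential inequality $\cE(u_r)' \gtrsim r^{-1}|\cE(u_r)|^{1+\gamma}$, integrate, then combine Cauchy--Schwarz in $r$ with the monotonicity formula and a block decomposition of scales for the Dini estimate. The only cosmetic difference is that you organize the cases by $\sign \cE(u_r)$ rather than $\sign \cE(z_r)$, and you correctly flag that the block/Cauchy--Schwarz step (the paper uses the doubly-exponential scales $s_j = e^{-e^j}r_3$) is what makes the dyadic square-root tail summable uniformly in $\gamma \in [0,1)$; the naive pointwise bound $\sqrt{f(r_k)} \lesssim k^{-1/(2\gamma)}$ alone would fail for $\gamma \ge 1/2$.
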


Let us illustrate with an example.  In many free-boundary-type problems the associated energy of interest is a modification of the following Weiss-type energy
\begin{equation}\label{eqn:weiss-def}
W_0^m(u) = \int_{B_1} |Du|^2 - m \int_{\del B_1} u^2 , \quad u \in H^1(B_1).
\end{equation}
It is a now-standard computation due to Weiss \cite{We2} that if $u_r(x) = r^{-m}u(rx)$, then $r \mapsto W_0^m(u_r)$ is absolutely continuous in $r \in (0, 1]$, and
\begin{equation}\label{eqn:weiss}
\frac{d}{dr} W_0^m(u_r) = \frac{n+2m-2}{r}(W_0(z_r) - W_0(u_r)) + \frac{1}{r} \int_{\del B_1} r^2|\del_r u_r|^2,
\end{equation}
where $z_r(x) = |x|^m u_r(x/|x|)$ is the $m$-homogenous extension of $u_r$.

\begin{example}\label{ex:ac}
We consider here the Alt-Caffarelli funtional $J$ defined on an open set $U$  by
\begin{equation}\label{eqn:ac}
J(u, U) = \int_{U} |Du|^2 dx + |\{ u > 0 \}|, \quad u \in H^1(U).
\end{equation}
We say that a nonnegative function $u \in H^1(U)$ minimizes $J$ if $J(u, U) \leq J(v, U)$ for all $u - v \in H^1_0(U)$, and $u \in H^1_{loc}(U)$ locally-minimizes $J$ if $J(u, W) \leq J(v, W)$ for all $u - v \in H^1_0(W)$ for some $W \subset\subset U$.  If $U = B_1$ we simply write $J(u)$.

The functional $J$ was introduced by \cite{AlCa}, who proved existence, compactness, and regularity properties of minimizers and their free-boundaries $\del \{ u > 0 \} \cap U$.  Minimizers $u$ are known to be locally-Lipschitz, and (from additional work of \cite{We2}, \cite{JeSa}) the free-boundary $\del \{ u > 0 \} \cap U$ is known to be analytic away from a singular set of dimension at most $n-d^*$, for some critical dimension (as yet unknown) $d^* \in \{ 5, 6, 7\}$.

The natural scaling for $J$ is $1$-homogeneous, so let us write in this Example $u_r(x) = r^{-1}u(r x)$ and $\xi_r(x) = r^{-1} \xi(r x)$.  It follows from the Weiss monotonicity \eqref{eqn:intro-ac-mono} that any blow-up (or blow-down) $u_0$ is $1$-homogenous.  \cite{AlCa} proved a regularity theory that shows $\del \{ u > 0 \}$ is regular at points where the blow-up is $u_0(x) = (x \cdot e)_+$ for some unit vector $e$.  \cite{EnSpVe} used a log-epiperimetric inequality to prove uniqueness of blow-ups $u_0$ with isolated singularities (such as in dimension $d^*$).

\vspace{3mm}

Let us fix in this Example a $1$-homogenous minimizer $u_0$ of $J$ which is regular away from $0$.  We shall demonstrate the nature and use of the symmetric log-eperimetric inequality for minimizers near $u_0$, which is covered in more detail in Section \ref{sec:ac}.  To study the behavior of $u$ near a $1$-homogenous $u_0$, the natural energy to consider is a normalized Weiss energy
\[
\cE(u) = W(u) - W(u_0), \quad W(u) = W^1_0(u) + | \{ u > 0 \} \cap B_1 |, 
\]
where $W^1_0$ as in \eqref{eqn:weiss-def}.  The Weiss monotonicity \eqref{eqn:weiss} and the coarea formula imply that for any $u \in H^1(B_1)$, $r \mapsto \cE(u_r)$ is absolutely continuous on $r \in (0, 1)$ and satisfies
\begin{equation}\label{eqn:intro-ac-mono}
\frac{d}{dr} \cE(u_r) = \frac{n}{r}(\cE(z_r) - \cE(u_r)) + \frac{1}{r} \int_{\del B_1} r^2 |\del_r u_r|^2  \quad \text{ for } a.e. r \in (0, 1),
\end{equation}
where $z_r(x) = |x|u_r(x/|x|)$ is the $1$-homogenous extension of $u_r$.  Note in particular if $u \in H^1(B_1)$ minimizes $J$, then $r \mapsto \cE(u_r)$ is increasing on $[0,1]$.

\vspace{3mm}

Write $u_0(x = r\theta) = r\sigma(\theta)$, $\Omega = \{ \sigma > 0 \} \subset B_1$, and $C\Omega = \{ u_0 > 0\}$, and $\nu(x)$ for the outer normal of $\Omega \subset B_1$.  Given $\xi : \del\Omega \to \R$, with $||\xi||_{C^0}$ sufficiently small depending only on $\Omega$, we can define a new domain $\Omega_\xi$ as a perturbation of $\Omega$, with boundary given by
\[
\del \Omega_\xi = \{ \cos(\xi(\theta)) \theta + \sin(\xi(\theta)) \nu(\theta) : \theta \in \del\Omega \}.
\]
If $\xi : \Omega \cap A_{R, \rho} \to \R$, then we define conical graph
\[
G_{C\Omega}(\xi) \cap A_{R, \rho} = \cup \{ r \Omega_{\xi_r} : r \in (\rho, R) \}.
\]
Here $A_{R, \rho}(x) = B_R(x) \setminus \overline{B_\rho(x)}$.  Write $\lambda_i^\Omega$ for the Dirichlet eigenvalues of $\Omega$, and $\phi_i^\Omega$ for the corresponding $L^2(\Omega)$-ON eigenfunctions.  We prove in Section \ref{sec:ac} the following. Recall that $\cE(u) = W(u) - W(u_0) \equiv W(u) - W(r\sigma)$.

\begin{theorem}[Symmetric log-epi for Alt-Caffarelli near smooth cones]\label{thm:ac-epi}
There are constants $\delta(\sigma), \eps(\sigma) > 0$, $\gamma(\sigma) \in [0, 1)$, so that the following holds.

Let $\xi \in C^{2,\alpha}(\del\Omega)$, $z \in H^1(\del B_1)$, be such that $\{ z > 0 \} = \Omega_\xi$ and
\begin{equation}\label{eqn:ac-epi-hyp}
||z - \sigma||_{H^1(\del B_1)} < \delta, \quad ||\xi||_{2,\alpha} < \delta.
\end{equation}
Then there is an $h \in H^1(B_1)$ with $h|_{\del B_1} = z$, so that
\begin{equation}\label{eqn:ac-epi-concl}
\cE(h) - \cE(rz) \leq -\eps |\cE(rz)|^{1+\gamma}.
\end{equation}
\end{theorem}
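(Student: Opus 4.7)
The plan is a Fourier-mode argument: straighten the boundary via a diffeomorphism, expand the bulk in Dirichlet eigenfunctions of $\Omega$, and choose as competitor the mode-by-mode harmonic extension. Using $\Phi_\xi:\Omega\to\Omega_\xi$ with $\Phi_\xi(\theta) = \cos(\xi(\theta))\theta + \sin(\xi(\theta))\nu(\theta)$, pull back $z$ to $\tilde z := z\circ\Phi_\xi \in H^1_0(\Omega)$ and expand $\tilde z = \sum_{i\geq 1} a_i\phi_i^\Omega$ in the $L^2(\Omega)$-orthonormal Dirichlet eigenbasis. Let $m_i$ be the harmonic exponent $m_i(m_i+n-2) = \lambda_i^\Omega$; since $u_0 = r\sigma$ is harmonic on $C\Omega$, one has $\phi_1^\Omega\propto\sigma$ and $\lambda_1^\Omega = n-1$, giving $m_1 = 1$ and a uniform gap $\inf_{i\geq 2}(m_i-1) \geq c(\sigma) > 0$. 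Define
\[
h_0(r\theta) := \sum_{i\geq 1} a_i\, r^{m_i}\,\phi_i^\Omega(\theta) \quad\text{on } C\Omega \cap B_1,
\]
and $h := h_0 \circ \Psi_\xi^{-1}$ on $C\Omega_\xi \cap B_1$ (extended by $0$), where $\Psi_\xi(r\theta) := r\Phi_\xi(\theta)$ is the radial extension of $\Phi_\xi$. Then $h|_{\partial B_1} = z$ and $\{h > 0\} \cap B_1 = \Omega_\xi \cap B_1$.

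\textbf{Energy comparison.} In the reduction $\xi\equiv 0$, orthogonality of the $\phi_i^\Omega$ and $\int_0^1 r^{n+2m-3}dr = 1/(n+2m-2)$ give
\[
\cE(rz) = \frac{1}{n}\sum_{i\geq 2}a_i^2(m_i-1)(m_i+n-1), \qquad \cE(h) = \sum_{i\geq 2}a_i^2(m_i-1),
\]
and hence
\[
\cE(rz) - \cE(h) = \frac{1}{n}\sum_{i\geq 2}a_i^2(m_i-1)^2 \;\geq\; \frac{c(\sigma)}{c(\sigma)+n-1}\,\cE(rz),
\]
which is \eqref{eqn:ac-epi-concl} with $\gamma = 0$ in this reduced case. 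For $\xi\neq 0$, I would Taylor-expand both $\cE(rz)$ and $\cE(h)$ jointly in $(\tilde z - \sigma, \xi)$: the linear variations of $W$ at $u_0$ vanish by the AC condition $(\partial_\nu\sigma)^2 = 1$ combined with the eigenvalue identity $-\Delta_S\sigma = (n-1)\sigma$, and since $h$ and $rz$ share the same positivity set $\Omega_\xi$ and trace $z$ on $\partial B_1$, the $\xi$-quadratic and bulk--boundary cross contributions enter both functionals identically and cancel in the difference, leaving $\cE(rz) - \cE(h) = \frac{1}{n}\sum_{i\geq 2}(m_i-1)^2 a_i^2 + O(\delta^3)$. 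The \emph{symmetric} nature of the inequality (with $|\cE(rz)|^{1+\gamma}$ rather than $\cE(rz)_+^{1+\gamma}$) is automatic here because the gain $\sum(m_i-1)^2 a_i^2 \geq 0$ is nonnegative regardless of the sign of $\cE(rz)$---and $\cE(rz)$ is genuinely allowed to be negative, occurring when $\xi$ shrinks $|\Omega_\xi|$ below $|\Omega|$ more than the bulk perturbation increases $W^1_0$.

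\textbf{Degenerate directions and main obstacle.} The Jacobi form of $\partial C\Omega$ (governing the $\xi$-sector) may have a nontrivial kernel from ambient isometries of $S^{n-1}$ fixing $\Omega$ setwise; on this finite-dimensional subspace the quadratic improvement vanishes. By analyticity of the AC functional $J$ near the smooth cone $u_0$, a Łojasiewicz--Simon inequality with some exponent $1+\gamma$, $\gamma \in [0, 1)$, holds on the kernel sub-manifold, and one modifies $h$ along the kernel directions (as in \cite{EnSpVe, cospve1}) to upgrade the gain to $\eps|\cE(rz)|^{1+\gamma}$. The main technical obstacle is the careful $(\tilde z - \sigma, \xi)$ expansion: verifying the vanishing of the linear variation and the exact cancellation of the $\xi$-quadratic and cross terms between $\cE(rz)$ and $\cE(h)$ requires a somewhat delicate change of variables on $B_1 \cap C\Omega_\xi$ via $\Psi_\xi$, together with careful use of the AC boundary condition to kill the potentially large linear-in-$\xi$ contributions coming from $|\Omega_\xi|-|\Omega|$. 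The Łojasiewicz step on the degenerate sector is then standard for analytic functionals, but must be implemented compatibly with the hard constraints $h|_{\partial B_1} = z$ and $\{h > 0\} = \{z > 0\}$.
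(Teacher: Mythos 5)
Your high-level strategy (eigenmode decomposition plus a \L{}ojasiewicz--Simon treatment of the degenerate sector) is in the right family, but there is a genuine gap in how you set up the decomposition and in the claimed cancellation, and it is not a technical nuisance -- it is exactly where the difficulty of the theorem lives.

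\textbf{The expansion is in the wrong basis.} You pull back to $\tilde z = z\circ\Phi_\xi \in H^1_0(\Omega)$ and expand in $\{\phi_i^\Omega\}$. The paper instead decomposes $z = c_1\phi_1^{\Omega_\xi} + \sum_{i\ge 2}c_i\phi_i^{\Omega_\xi}$ in eigenfunctions of the \emph{perturbed} domain, and treats the two pieces by entirely different competitors (Lemmas \ref{lem:ac-zplus-epi} and \ref{lem:ac-z1-epi}). This distinction matters: with your choice, if $\tilde z = \kappa\phi_1^\Omega$ exactly but $\xi\ne 0$, then $h_0 = r\tilde z$ and your competitor is $h = h_0\circ\Psi_\xi^{-1} = rz$. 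That gives $\cE(h)=\cE(rz)$ and \emph{no} gain, while $\cE(rz)$ can be of size $\|\xi\|^2$ and of either sign. So the construction as written does not prove the statement even on an open set of data.

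\textbf{The claimed cancellation does not close.} You assert that after pulling back through $\Psi_\xi$, the $\xi$-quadratic and cross contributions ``enter both functionals identically and cancel,'' leaving $\cE(rz)-\cE(h) = \tfrac{1}{n}\sum_{i\ge 2}(m_i-1)^2a_i^2 + O(\delta^3)$. In fact, writing $Q^2:=\sum_{i\ge 2}(m_i-1)^2a_i^2$, the change-of-variables error is of the form $\int (g^{ij}J-\delta^{ij})(\partial_i(r\tilde z)\partial_j(r\tilde z)-\partial_ih_0\partial_jh_0)$, which is $O(\|\xi\|_{C^1}\cdot Q)$, not $O(\delta^3)$. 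Since $Q$ can be arbitrarily small relative to $\|\xi\|$, this error need not be dominated by the gain $Q^2$. The clean decoupling $W(rz)=W(rz_1)+W_0(rz_+)$ the paper uses is a consequence of working in the $\Omega_\xi$-basis and is not available in your setup without extra work.

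\textbf{The degenerate sector is not ``isometries plus a kernel modification.''} In the paper, the first-mode competitor is $h_1(r\theta) = r\kappa(r)\phi_1^{\Omega_{g(r)}}(\theta)$: the positivity cone itself is flowed in $r$ along the negative gradient of a constrained reduced functional $G(\mu,s)$ built out of $\cG(\zeta,s) = (\kappa^2+s^3)(\lambda_1^{\Omega_\zeta}-(n-1)) + \haus^{n-1}(\Omega_\zeta) - \haus^{n-1}(\Omega)$, and the \L{}ojasiewicz inequality is used to monitor $|W(rz_1)-W(r\sigma)|$ along that flow. The degenerate directions here are the kernel modes of $\delta^2\cG(0,0)$ (plus one scale parameter $s$), which is a different -- and a priori larger -- object than the moduli of ambient isometries fixing $\Omega$. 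Moreover the ``symmetric'' form $|\cE(rz)|^{1+\gamma}$ (rather than $[\cE(rz)]_+^{1+\gamma}$) is not automatic from nonnegativity of a Fourier gain: the paper gets it by scaling the \emph{positive} and \emph{negative} $\xi^\perp$-modes oppositely with factors $\eta_\pm(r) = 1\mp a_\pm(1-r)\eps$, and by running the gradient flow in the sign-appropriate direction when $G(\mu_0,s_0)$ is negative. Your competitor, being a single fixed diffeomorphism of a single harmonic extension, has no mechanism to produce this bi-directional improvement and has no replacement for the $\kappa(r),g(r)$-flow.

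In short: the elementary case $\xi\equiv 0$ that you compute is correct and is indeed how Lemma \ref{lem:ac-zplus-epi} works after projecting off the first $\Omega_\xi$-mode, but the heart of the theorem is the $\xi$-sector, and there your proposal gives no gain at all in the pure-first-mode case, relies on an uncontrolled cancellation, and under-specifies the \L{}ojasiewicz step.
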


Theorem \ref{thm:ac-epi} implies that if $u \in H^1(B_1)$ minimizes $J$ and satisfies
\begin{gather}
\sup_{r \in (\rho, 1)} ||u_r - u_0||_{H^1(\del B_1)} \leq \delta, \quad \text{ and } \label{eqn:intro-ac-ass1} \\
\del \{ u > 0 \} \cap A_{1, \rho} = G_{C\Omega}(\xi) \cap A_{1, \rho}, \quad \sup_{r \in (\rho, 1)} ||\xi_r||_{C^{2,\alpha}(\Omega)} \leq \delta, \label{eqn:intro-ac-ass2}
\end{gather}
for some $\rho \in [0, 1/2]$, then
\begin{gather}\label{eqn:intro-ac-epi}
\cE(u_r) \leq \cE(z_r) - \eps |\cE(z_r)|^{1+\gamma} \quad \forall r \in (\rho, 1).
\end{gather}
So for a.e. $r \in (\rho, 1)$, $\cE(u_r)$ satisfies both a monotonicity formula \eqref{eqn:intro-ac-mono} like \eqref{eqn:intro-mono} and a symmetric log-epiperimetric inequality \eqref{eqn:intro-ac-epi} like \eqref{eqn:intro-epi}.  Our growth-decay Theorem \ref{thm:main} immediately implies:
\begin{theorem}[Growth-Decay estimates near smooth cones]\label{eqn:ac-main}
There are $c(\sigma), \beta(\sigma) > 0$ so that if $u \in H^1(B_1)$ minimizes $J$ and satisfies \eqref{eqn:intro-ac-ass1}, \eqref{eqn:intro-ac-ass2} for some $\rho \in [0, 1/2]$ (and $\delta, \gamma$ as in Theorem \ref{thm:ac-epi}), and $\rho' \in [\rho, 1]$ is chosen so that $\cE(u_r) \leq 0$ for $r \in [\rho, \rho']$ and $\cE(u_r) \geq 0$ for $r \in [\rho', 1]$, then we have the decay-growth estimates
\begin{align}
&\int_{\max\{s, \rho'\}}^{\max\{r, \rho'\}} ||\del_ru_r||_{L^2(\del B_1)} dr \leq \left\{ \begin{array}{l l} c ( \left[\cE(1)_+\right]^{\frac{\gamma-1}{2}} + \delta \gamma \log(1/r)^{\frac{1-\gamma}{2\gamma}})^{-1}  & \gamma > 0 \\ c \left[\cE(1)_+\right]^{\frac{1}{2}} r^{\frac{\beta}{2}} & \gamma = 0 \end{array} \right. , \label{eqn:intro-ac-grow} \\
&\int_{\min\{s, \rho'\}}^{\min\{r, \rho'\}} ||\del_r u_r||_{L^2(\del B_1)} dr \leq \left\{ \begin{array}{l l} c ( \left[\cE(\rho)_-\right]^{\frac{\gamma-1}{2}} + \delta \gamma \log(s/\rho)^{\frac{1-\gamma}{2\gamma}})^{-1}  & \gamma > 0  \\ c  \left[\cE(\rho)_-\right]^{\frac{1}{2}} (\rho/s)^{\frac{\beta}{2}} & \gamma = 0 \end{array} \right. ,\label{eqn:intro-ac-decay}
\end{align}
for every $\rho \leq s < r \leq 1$, and hence we have the Dini estimate
\begin{equation}\label{eqn:intro-ac-dini}
\int_{\rho}^{1} ||\del_r u_r||_{L^2(\del B_1)} dr \leq c( \left[\cE(\rho)_-\right]^{\frac{1-\gamma}{2}} + \left[\cE(1)_+\right]^{\frac{1-\gamma}{2}}) .
\end{equation}
\end{theorem}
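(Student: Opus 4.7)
The plan is to obtain Theorem \ref{eqn:ac-main} as an immediate consequence of the abstract growth--decay theorem (Theorem \ref{thm:main}, whose informal version is Theorem \ref{thm:teaser}), specialized to the Alt--Caffarelli scaling $m=1$, $c_E=n$, and the annular range $(\rho,1)$. The two hypotheses of that theorem which must be verified on this range are the monotonicity \eqref{eqn:intro-mono} and the symmetric log-epiperimetric inequality \eqref{eqn:intro-epi} for $\cE(u_r)$.

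The monotonicity comes for free: \eqref{eqn:intro-ac-mono} is exactly \eqref{eqn:intro-mono} with these constants, and is a computation valid for arbitrary $u\in H^1(B_1)$. The real content is the scale-by-scale log-epi. I would fix $r\in(\rho,1)$, set $z := u_r|_{\del B_1}$, and note that by \eqref{eqn:intro-ac-ass1}--\eqref{eqn:intro-ac-ass2} the pair $(z,\xi_r)$ satisfies the hypotheses \eqref{eqn:ac-epi-hyp} of Theorem \ref{thm:ac-epi}. That theorem produces a competitor $h\in H^1(B_1)$ with $h|_{\del B_1}=z$ and
\[
\cE(h) \leq \cE(rz) - \eps |\cE(rz)|^{1+\gamma},
\]
where $rz$ is the $1$-homogeneous extension of $z$ to $B_1$, i.e.\ exactly $z_r$.

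To convert this competitor estimate into a lower bound for $\cE(u_r)$ I would use minimality. Since $u$ minimizes $J$ on $B_1$, a change of variables shows $u_r$ minimizes $J$ on $B_1$ as well; and since $W(v)=J(v,B_1) - \int_{\del B_1} v^2$ differs from $J$ only by boundary data, the equality $h|_{\del B_1} = u_r|_{\del B_1}$ combined with $J(u_r)\leq J(h)$ forces $W(u_r)\leq W(h)$, hence $\cE(u_r)\leq \cE(h)$. Chaining with the epi-estimate for $h$ yields \eqref{eqn:intro-ac-epi}. Threading the single-scale result of Theorem \ref{thm:ac-epi} through minimality to obtain the inequality at every $r$ is the main (though not deep) obstacle.

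With both hypotheses verified on $(\rho,1)$, Theorem \ref{thm:main} directly produces the threshold $\rho'\in[\rho,1]$ at which $\cE(u_r)$ changes sign, together with the quantitative integral bounds \eqref{eqn:intro-ac-grow} and \eqref{eqn:intro-ac-decay}. The Dini bound \eqref{eqn:intro-ac-dini} then follows by taking $s=\rho$, $r=1$ in those two integral estimates: for $\gamma>0$ the logarithmic factor inside the reciprocal in \eqref{eqn:intro-ac-grow}--\eqref{eqn:intro-ac-decay} vanishes at the relevant endpoint, reducing the bounds to $c\,[\cE(1)_+]^{(1-\gamma)/2}$ and $c\,[\cE(\rho)_-]^{(1-\gamma)/2}$ respectively; for $\gamma=0$ the prefactors $r^{\beta/2}$ and $(\rho/s)^{\beta/2}$ are bounded by $1$, and the sum of the two integrals is again of the stated form.
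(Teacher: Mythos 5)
Your proof is correct and takes essentially the same approach as the paper: Theorem \ref{eqn:ac-main} is presented there as an immediate consequence of Theorem \ref{thm:main} once the per-scale hypotheses \eqref{eqn:intro-ac-mono} and \eqref{eqn:intro-ac-epi} are verified, and your minimality/competitor step (using $J(u_r)\leq J(h)$ together with equal boundary data to pass from the competitor bound of Theorem \ref{thm:ac-epi} to $\cE(u_r)\leq\cE(z_r)-\eps|\cE(z_r)|^{1+\gamma}$) is exactly the bridge the paper asserts but leaves implicit.
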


A key point is that no sign of $\cE$ is assumed anywhere, and so no a priori assumptions on the singular set in $B_{\rho}$ are required, but on the other hand one may have both decay and growth in $r$ depending on the sign of $\cE$.  \eqref{eqn:intro-ac-grow}, \eqref{eqn:intro-ac-decay} recover the uniqueness of blow-ups with isolated singularities as proven by \cite{EnSpVe}, and also establish uniqueness of the same kinds of blow-downs (Corollary \ref{cor:ac-unique}).  Additionally, when combined with the $\eps$-regularity of \cite{AlCa}, \eqref{eqn:intro-ac-dini} implies $u$ stays close to $u_0$ while the energy $|\cE|$ stays small.  Precisely, in Theorem \ref{thm:ac-param}, we show that if $u$ satisfies \eqref{eqn:intro-ac-ass1}, \eqref{eqn:intro-ac-ass2} with $\rho = 1/2$ and $\delta(\eps, \sigma)$ sufficiently small, and we have
\[
|\cE(u_r)| \leq \delta(\eps, \sigma) \quad \forall r \in (\rho''/2, 1)
\]
for some $\rho'' \in [0, 1/2]$, then we can \emph{deduce} \eqref{eqn:intro-ac-ass1}, \eqref{eqn:intro-ac-ass2} holds with $\rho''$ in place of $\rho$ and $\eps$ in place of $\delta$.  In Theorem \ref{thm:ac-lious} we show how this ``propogation of closeness'' can be used in blow-up arguments to prove local structure from global rigidity properties.
\end{example}

\vspace{2mm}

In Section \ref{sec:decay} we prove an abstract decay-growth estimate, which shows how the monotonicity formula \eqref{eqn:intro-mono} and the symmetric epiperimetric inequality \eqref{eqn:intro-epi} imply a differential inequality for $\cE$.  In Section \ref{sec:fb} we explain how to prove and apply the symmetric epiperimetric inequality to the obstacle problem, and in Section \ref{sec:am} we adapt the symmetric epiperimetric inequality to almost-minimizing currents.  The setups and applications are similar to the Example above, but we want to highlight that we can additionally handle almost-minimizing currents defined only in an annulus (Section \ref{ssec:am}), similar to the situation considered in \cite{MaNo}.

\textbf{Acknowledgements:} N.E. was supported in part by NSF grant DMS-2204301. L.S. acknowledges the support of the NSF Career Grant DMS 2044954. B.V. was supported by the European Research Council's (ERC) project n.853404 ERC VaReg - {\it Variational approach to the regularity of the free boundaries}, financed by Horizon 2020.

\section{An abstract decay-growth theorem}\label{sec:decay}

In this section we prove that the symmetric (log-)epiperimetric inequality, combined with a monotonicity formula, implies a decay/growth estimate.  We allow errors which polynomially grow or decay.

We shall work in $\R^n$.  Fix $m \in (2-n, \infty)$.  Take $r_1 < r_3$, and $u \in H^1(B_{r_3})$.  For $r \leq r_3$ define the $m$-homogenously rescaled $u_r(x) = r^{-m} u(rx) \in H^1(B_1)$, and for a.e. $r \in (0, r_3)$ define $z_r(x) = |x|^m u_r(x/|x|) \in H^1(B_1)$ to be the $m$-homogenous extension of $u_r$.

We shall assume, throughout this section, that the following assumptions hold.

\begin{ass}\label{ass_symepi}There are constants $\Lambda_\pm \geq 0$, $\alpha, c_E, \eps \in (0, 1]$, $\gamma \in [0, 1)$, and an energy functional $\cE : H^1(B_1) \to [-1, 1]$, so that the following hold:
\begin{enumerate}
\item Almost-monotonicity: $r \mapsto \cE(u_r)$ is a $BV_{loc}$ function on $(r_2, r_3)$, and satisfies
\begin{equation}\label{eqn:main-hyp1}
\frac{d}{dr} \cE(u_r) \geq \frac{c_E}{r}(\cE(z_r) - \cE(u_r)) + \frac{c_E}{r} \int_{\del B_1} r^2 |\del_r u_r|^2 d\haus^{n-1} - \Lambda_+ r^{\alpha - 1} - \Lambda_- r^{-\alpha-1}
\end{equation}
for a.e. $r \in (r_1, r_3)$.

\item Symmetric (log-)epipimetric + almost-minimizing: for a.e. $r \in (r_2, r_3)$ we have
\begin{equation}\label{eqn:main-hyp2}
\cE(u_r) \leq \cE(z_r) - \eps|\cE(z_r)|^{1+\gamma} + \Lambda_+ r^{\alpha} + \Lambda_- r^{-\alpha}
\end{equation}
\end{enumerate}
\end{ass}


Our main Theorem is the following decay-growth estimate for $r\del_r u_r$, which in turn (see Remark \ref{rem:diff-ineq}) controls the differences $||u_r - u_s||_{L^2(\del B_1)}$.
\begin{theorem}[Decay-growth of $r\del_r u_r$]\label{thm:main}
Under the assumptions \eqref{eqn:main-hyp1}, \eqref{eqn:main-hyp2}, and provided $\max\{ \Lambda_+ r_3^{\alpha}, \Lambda_- r_1^{-\alpha} \} \leq 1$, if we define
\begin{equation}
G(r) = \cE(u_r) + 3\alpha^{-1} \Lambda_+ r^{\alpha} - 3\alpha^{-1} \Lambda_- r^{-\alpha}
\end{equation}
then the following holds.

\begin{enumerate}
\item $G$ is increasing, and for a.e. $r \in (r_1, r_3)$ satisfies
\begin{equation}\label{eqn:main-concl1}
G' \geq (\delta/r)|G|^{1+\gamma} ,
\end{equation}
for $\delta = \delta(c_E, \eps, \gamma, \alpha)$.

\item We have the Dini estimate
\begin{equation}\label{eqn:main-concl2}
\int_{r_1}^{r_3} ||r \del_r u_r||_{L^2(\del B_1)} \frac{dr}{r} \leq c( \left[G(r_1)_-\right]^{\frac{1-\gamma}{2}} + \left[G(r_3)_+\right]^{\frac{1-\gamma}{2}}),
\end{equation}
where $v_+ = \max\{ v, 0\}$, $v_- = -\min\{ v, 0\}$, and $c = c(c_E, \eps, \gamma, \alpha)$.

\item If $r_2 \in [r_1, r_3]$ is chosen so that $G \leq 0$ on $[r_1, r_2]$ and $G \geq 0$ on $[r_2, r_3]$, then for any $r_2 \leq s \leq r \leq r_3$ we have
\begin{equation}\label{eqn:main-concl3}
\int_s^r ||\del_ru_r||_{L^2(\del B_1)} dr \leq \left\{ \begin{array}{l l} c ( G(r_3)^{-\gamma} + \delta \gamma \log(r_3/r))^{\frac{\gamma - 1}{2\gamma}}  & \gamma > 0 \\ c G(r_3)^{\frac{1}{2}} (r/r_3)^{\frac{\delta}{2}} & \gamma = 0 \end{array} \right. ,
\end{equation}
while for any $r_1 \leq s \leq r \leq r_2$ we have
\begin{equation}\label{eqn:main-concl4}
\int_s^r ||\del_r u_r||_{L^2(\del B_1)} dr \leq \left\{ \begin{array}{l l} c ( (-G(r_1))^{-\gamma} + \delta \gamma \log(s/r_1))^{\frac{\gamma-1}{2\gamma}}  & \gamma > 0  \\ c  (-G(r_1))^{\frac{1}{2}} (r_1/s)^{\frac{\delta}{2}} & \gamma = 0 \end{array} \right. ,
\end{equation}
where $c = c(c_E, \eps, \gamma, \alpha)$ .
\end{enumerate}
\end{theorem}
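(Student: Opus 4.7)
The plan is to combine the almost-monotonicity \eqref{eqn:main-hyp1} and the symmetric epi \eqref{eqn:main-hyp2} into a single differential inequality $G'(r) \geq (\delta/r)|G(r)|^{1+\gamma}$, integrate it, and then transfer the resulting pointwise bound on $G$ to a bound on $\|\del_r u_r\|_{L^2(\del B_1)}$ via a weighted-product trick. The starting point is to rearrange \eqref{eqn:main-hyp2} as $\cE(z_r) - \cE(u_r) \geq \eps|\cE(z_r)|^{1+\gamma} - \Lambda_+ r^\alpha - \Lambda_- r^{-\alpha}$, substitute into \eqref{eqn:main-hyp1}, use $c_E \leq 1$ to absorb the new errors into the existing $\Lambda_\pm r^{\pm\alpha-1}$ terms, and then differentiate the shift $G(r) - \cE(u_r) = 3\alpha^{-1}\Lambda_+ r^\alpha - 3\alpha^{-1}\Lambda_- r^{-\alpha}$ to obtain
\[
G'(r) \geq \frac{c_E\eps}{r}|\cE(z_r)|^{1+\gamma} + \frac{c_E}{r}\int_{\del B_1} r^2|\del_r u_r|^2\, d\haus^{n-1} + \Lambda_+ r^{\alpha-1} + \Lambda_- r^{-\alpha-1}.
\]
This already shows $G$ is increasing, which is part of conclusion (1).

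To upgrade $|\cE(z_r)|^{1+\gamma}$ in the display above to a multiple of $|G|^{1+\gamma}$ I would split on whether $|G(r)|$ is smaller or larger than $K(\Lambda_+ r^\alpha + \Lambda_- r^{-\alpha})$ for a suitable $K = K(\alpha)$. When $|G|$ is small, $|G|^{1+\gamma} \lesssim \Lambda_+ r^\alpha + \Lambda_- r^{-\alpha}$ (using $\gamma \geq 0$ and the hypothesis $\Lambda_\pm r^{\pm\alpha} \leq 1$), so the $\Lambda_\pm r^{\pm\alpha-1}$ error terms already present in the display suffice. When $|G|$ is large I pick $K$ so that $\cE(u_r)$ has the same sign as $G$ and $|\cE(u_r)| \geq |G|/2$; then for $G > 0$ the epi gives $\cE(z_r) \geq \cE(u_r) - \Lambda_+ r^\alpha - \Lambda_- r^{-\alpha} \geq G/4$, producing $|\cE(z_r)|^{1+\gamma} \gtrsim |G|^{1+\gamma}$; for $G < 0$ the same kind of lower bound on $|\cE(z_r)|$ is immediate whenever $\cE(z_r) \leq \cE(u_r)/2$, while in the remaining subcase $\cE(z_r) > \cE(u_r)/2$ we have $\cE(z_r) - \cE(u_r) \geq |G|/4$ and I fall back on the raw monotonicity \eqref{eqn:main-hyp1} to deduce $G' \geq (c_E/4r)|G| \gtrsim (1/r)|G|^{1+\gamma}$ (since $|\cE(u_r)| \leq 1$ forces $|G| \lesssim 1$).

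With the ODE $G' \geq (\delta/r)|G|^{1+\gamma}$ in hand, separation of variables on the sign-constant pieces $[r_1, r_2]$ and $[r_2, r_3]$ yields the pointwise bounds on $G$: for $\gamma > 0$ on the growing piece, $G(r) \leq (G(r_3)^{-\gamma} + \gamma\delta\log(r_3/r))^{-1/\gamma}$, and $G(r) \leq G(r_3)(r/r_3)^\delta$ for $\gamma = 0$; symmetrically on the decaying piece. To pass from $G$ to $F(r) := \int_{\del B_1}|\del_r u_r|^2\, d\haus^{n-1}$, I would multiply the two lower bounds $G' \geq c_E r F$ (from the first display above) and $G' \geq (\delta/r)|G|^{1+\gamma}$ to get $(G')^2 \geq c_E\delta\, F\, |G|^{1+\gamma}$, whence $\sqrt{F} \leq (c_E\delta)^{-1/2}\, G'\, |G|^{-(1+\gamma)/2}$. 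The substitution $u = G(t)$ then produces $\int_s^r \sqrt{F}\, dt \lesssim G(r)^{(1-\gamma)/2}$ on the growing side (and the analogous bound with $|G(s)|^{(1-\gamma)/2}$ on the decaying side); inserting the ODE bound for $G$ gives \eqref{eqn:main-concl3} and \eqref{eqn:main-concl4}, and summing the two halves yields the Dini estimate \eqref{eqn:main-concl2}.

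The main obstacle is the $G < 0$ regime of the case analysis above: the symmetric epi alone only upper-bounds $|\cE(z_r)|$ in terms of $|\cE(u_r)| + \text{error}$, so recovering a useful \emph{lower} bound on $|\cE(z_r)|^{1+\gamma}$ requires either that $\cE(z_r)$ inherit the sign of $\cE(u_r)$ or that one fall back on the $(\cE(z_r)-\cE(u_r))/r$ term appearing with a good sign in the monotonicity — this is the precise point at which \emph{both} hypotheses \eqref{eqn:main-hyp1} and \eqref{eqn:main-hyp2} are used in tandem. The remaining bookkeeping (three error scales $|G|$, $\Lambda_+ r^\alpha$, $\Lambda_- r^{-\alpha}$, plus the $BV$ interpretation of $\frac{d}{dr}\cE(u_r)$) is routine once the cases are separated.
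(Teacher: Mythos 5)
Your proof is correct, and it tracks the paper's argument closely for conclusion (1) but takes a genuinely different — and in my view cleaner — route through conclusions (2)–(3). For (1), both you and the paper derive $G' \geq (\delta/r)|G|^{1+\gamma}$ by a four-way case split; the paper indexes its cases by the sign of $\cE(z_r)$ and whether $|\cE(z_r)|$ dominates $|\cE(u_r)-\lambda|$ (with $\lambda = \Lambda_+r^\alpha + \Lambda_-r^{-\alpha}$), whereas you index by whether $|G| \gtrless K\lambda$ and then by sign/dominance of $\cE(z_r)$; the underlying comparisons are the same, and in both $G < 0$ sub-cases the crucial trick is to fall back on the raw $(\cE(z_r)-\cE(u_r))/r$ term in the monotonicity rather than the $|\cE(z_r)|^{1+\gamma}$ term from the epi inequality. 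The real divergence is in (2)–(3): the paper applies Cauchy–Schwarz on an interval $(s,r)$ to get
\[
\Bigl(\int_s^r \|\del_r u_r\|\Bigr)^2 \leq c_E^{-1}\log(r/s)\,(G(r)-G(s)),
\]
then kills the $\log(r/s)^{1/2}$ factor by decomposing $(s,r)$ into the doubly-exponential dyadic scales $s_i = e^{-e^i}r_3$ (resp.\ $s_i = e^{e^i}r_1$) and summing a geometric-type series. You instead take the geometric mean of the two a.e.\ lower bounds $G' \geq c_E\, r\,\|\del_r u_r\|_{L^2(\del B_1)}^2$ and $G' \geq (\delta/r)|G|^{1+\gamma}$ to get $\|\del_r u_r\| \leq (c_E\delta)^{-1/2}\,G'\,|G|^{-(1+\gamma)/2}$, and integrate the right-hand side exactly by the chain rule: on $[r_2,r_3]$,
\[
\int_s^r G'\,G^{-(1+\gamma)/2}\,dt \leq \tfrac{2}{1-\gamma}\bigl(G(r)^{(1-\gamma)/2} - G(s)^{(1-\gamma)/2}\bigr),
\]
and symmetrically with $-G$ on $[r_1,r_2]$ (this is a valid use of the fundamental theorem for the increasing BV function $h(t) = \frac{2}{1-\gamma}\,\sign(G)\,|G|^{(1-\gamma)/2}$, since for increasing BV $h$ one has $\int_s^r h' \leq h(r)-h(s)$). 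Plugging in the ODE bound $G(r) \leq (G(r_3)^{-\gamma}+\delta\gamma\log(r_3/r))^{-1/\gamma}$ (resp.\ $G(r)\leq G(r_3)(r/r_3)^\delta$ when $\gamma=0$) then yields \eqref{eqn:main-concl3}, \eqref{eqn:main-concl4} directly, and setting $s=r_2$ and summing the two halves gives \eqref{eqn:main-concl2} with $c = \frac{2}{(1-\gamma)\sqrt{c_E\delta}}$ — no dyadic decomposition needed, and the argument is uniform in $\gamma$. This is a real simplification of the paper's step.
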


\begin{remark}\label{rem:diff-ineq}
It is useful to note the elementary inequality:
\begin{align}\label{eqn:diff-ineq}
||u_r - u_s||_{L^2(\del B_1)} = \left( \int_{\del B_1} \left| \int_s^r \del_r u_r dr \right|^2 d\theta \right)^{1/2} \leq \int_s^r ||\del_r u_r||_{L^2(\del B_1)} dr.
\end{align}
\end{remark}

\begin{remark}\label{rem:mono-only}
Monotonicity \eqref{eqn:main-hyp1} by itself only controls the \emph{square} of the $L^2$ norm:
\[
\int_{r_1}^{r_3} ||r \del_r u_r||_{L^2(\del B_1)}^2 \frac{dr}{r} \leq c_E^{-1} (G(r_3) - G(r_1)),
\]
which is insufficient to control the behavior of $u$ across many scales.
\end{remark}

\begin{remark}
If one allows the energy $\cE(u_r)$ to take values outside $[-1, 1]$, then in place of \eqref{eqn:main-concl1} one has the ODE
\begin{equation}
G' \geq (\delta/r) \min \{ |G|^{1+\gamma}, |G|\} \quad a.e. r \in (r_1, r_3) .
\end{equation}
\end{remark}

Combining Theorem \ref{thm:main} and Remark \ref{rem:diff-ineq}, we obtain directly the following.
\begin{cor}[Uniqueness at $0$ and $\infty$]\label{cor:unique}
Under the same hypotheses as Theorem \ref{thm:main}, if $r_1 = 0$ (and so, necessarily $\Lambda_- = 0$), then there is a limit $u_0 \in L^2(\del B_1)$ so that
\begin{equation}
||u_r - u_0||_{L^2(\del B_1)} \leq \left\{\begin{array}{l l} c \log(r_3/r)^{\frac{\gamma-1}{2\gamma}} & \gamma > 0 \\ c (r/r_3)^{\frac{\delta}{2}} & \gamma = 0 \end{array} \right.  \quad \forall r < r_3 .
\end{equation}
where $c, \delta > 0$ both depend on $(c_E, \eps, \gamma, \alpha)$.

On the other hand if $r_3 = \infty$ (and so $\Lambda_+ = 0$), then there is a limit $u_\infty \in L^2(\del B_1)$ so that
\begin{equation}
||u_r - u_\infty||_{L^2(\del B_1)} \leq \left\{ \begin{array}{l l}c \log(r/r_1)^{\frac{\gamma-1}{2\gamma}} & \gamma > 0 \\ c (r_1/r)^{\frac{\delta}{2}} & \gamma = 0 \end{array} \right. \quad \forall r > r_1.
\end{equation}

If both $r_1 = 0$ and $r_3 = \infty$, then $\del_r u_r \equiv 0$.
\end{cor}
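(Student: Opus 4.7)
The plan is to obtain both conclusions as almost-immediate consequences of Theorem \ref{thm:main}, combined with the telescoping bound of Remark \ref{rem:diff-ineq}:
\[
||u_r - u_s||_{L^2(\del B_1)} \leq \int_s^r ||\del_\rho u_\rho||_{L^2(\del B_1)} \, d\rho.
\]
The key observation is that (\ref{eqn:main-concl3}) and (\ref{eqn:main-concl4}) \emph{degenerate} at the relevant boundary. Substituting $r_1 = 0$ into (\ref{eqn:main-concl4}), the factor $(r_1/s)^{\delta/2}$ (when $\gamma = 0$) or $(\log(s/r_1))^{(\gamma-1)/(2\gamma)}$ (when $\gamma > 0$, since the exponent is negative) vanishes for every fixed $s > 0$, while $|G(0)|$ is uniformly bounded (by $1 + 6/\alpha$, using $|\cE| \leq 1$ and $\Lambda_\pm r_3^{\pm \alpha} \leq 1$); hence the right-hand side of (\ref{eqn:main-concl4}) is zero. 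The analogous degeneration occurs in (\ref{eqn:main-concl3}) at $r_3 = \infty$.

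In the case $r_1 = 0$ (which forces $\Lambda_- = 0$), let $r_2 \in [0, r_3]$ be the sign-change radius furnished by Theorem \ref{thm:main}(3); its existence is guaranteed by the monotonicity of $G$ in part (1). Plugging $r_1 = 0$ into (\ref{eqn:main-concl4}) yields
\[
\int_s^r ||\del_\rho u_\rho||_{L^2(\del B_1)} \, d\rho = 0 \quad \text{for all } 0 < s \leq r \leq r_2,
\]
so $u_\rho$ is constant on $(0, r_2)$; call this constant $u_0$. (If $r_2 = 0$ this interval is empty, and we instead set $u_0 := \lim_{r \to 0^+} u_r$, whose existence follows from (\ref{eqn:main-concl3}) by Cauchy as $r \to 0$.) For $r \in [r_2, r_3)$, applying (\ref{eqn:main-concl3}) with $s = r_2$ together with the telescoping bound gives
\[
||u_r - u_0||_{L^2(\del B_1)} = ||u_r - u_{r_2}||_{L^2(\del B_1)} \leq \int_{r_2}^r ||\del_\rho u_\rho||_{L^2(\del B_1)} \, d\rho,
\]
which is controlled by the rate stated in the corollary after absorbing the prefactors involving $G(r_3)$ into $c$ via the uniform bound on $|G|$. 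For $r \in (0, r_2)$ the bound is trivial since $u_r = u_0$. The case $r_3 = \infty$ is entirely symmetric: (\ref{eqn:main-concl3}) degenerates, forcing $u_\rho \equiv u_\infty$ on $(r_2, \infty)$, and (\ref{eqn:main-concl4}) provides the bound for $r \in (r_1, r_2]$.

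If both $r_1 = 0$ and $r_3 = \infty$, both arguments apply, yielding $u_\rho \equiv u_0$ on $(0, r_2)$ and $u_\rho \equiv u_\infty$ on $(r_2, \infty)$. Continuity of $\rho \mapsto u_\rho$ in $L^2(\del B_1)$ on $(0, \infty)$---which follows from local $L^1$-integrability of $\rho \mapsto ||\del_\rho u_\rho||_{L^2(\del B_1)}$, obtained by Cauchy--Schwarz on bounded sub-intervals from the $L^2$-type bound in Remark \ref{rem:mono-only}---identifies $u_0 = u_\infty$, so $u_\rho$ is globally constant and $\del_\rho u_\rho \equiv 0$ a.e. The main subtlety, and the step I expect to require the most care, is justifying the direct substitution $r_1 = 0$ (resp.\ $r_3 = \infty$) into the pointwise estimates. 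This is legitimate because the constants $c, \delta$ in Theorem \ref{thm:main} are independent of $r_1$ and $r_3$; rigorously one applies the theorem on $[\rho, r_3]$ with $\rho \to 0^+$ (resp.\ on $[r_1, R]$ with $R \to \infty$) and uses the uniform bound on $|G|$ to conclude that the right-hand side vanishes in the limit.
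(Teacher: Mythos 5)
Your proof is correct, and it takes a route that is in the same spirit as the paper's but is packaged differently. The paper's own argument shows \emph{by contradiction} that $G \geq 0$ everywhere when $r_1 = 0$ (and hence the sign-change radius is $r_2 = 0$): if $G(r_2) < 0$ for some $r_2 > 0$, then the decay estimate \eqref{eqn:main-concl4} applied for $r_1 < r$ small forces $-G(r_2)$ below any positive constant, which is absurd. You instead apply \eqref{eqn:main-concl4} with $r_1 = 0$ and read off that its right-hand side is literally zero, which gives $u_r$ constant on the whole interval $(0, r_2)$. These two formulations encode the same mechanism (the logarithmic divergence of $\int dr/r$ near $r_1 = 0$ is incompatible with $G$ staying negative), and indeed once $u_r$ is constant on an interval the symmetric epiperimetric inequality forces $\cE(u_r) = 0$ there, so the two conclusions are formally equivalent. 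What your variant buys is that it avoids the explicit contradiction argument and instead feeds directly into the telescoping bound of Remark \ref{rem:diff-ineq}. What the paper's version buys is a one-line reduction: once $G \geq 0$ is known, the growth estimate \eqref{eqn:main-concl3} applies on all of $(0, r_3)$ without any interval-splitting. One point that deserves the care you already flag: the substitution $r_1 = 0$ is not literally in the scope of Theorem \ref{thm:main}, and needs to be obtained as a limit over $[\rho_0, r_3]$ with $\rho_0 \to 0^+$; this works because the sign-change radius $r_2$, the constants $c, \delta$, and the uniform bound $|G| \leq 1 + 6/\alpha$ are all independent of $\rho_0$, and the right-hand side of \eqref{eqn:main-concl4} tends to $0$ as $\rho_0 \to 0$ regardless of whether $-G(\rho_0)$ is bounded away from zero or not. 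Your treatment of the third statement (continuity at $r_2$ via Cauchy–Schwarz and Remark \ref{rem:mono-only}) is a valid alternative to the paper's invocation of the first two parts together with \eqref{eqn:main-hyp1}; a slightly shorter path is to note that the first conclusion applied on $[0, R]$ with $R \to \infty$ already forces $u_r \equiv u_0$.
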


\begin{proof}[Proof of Corollary \ref{cor:unique}]
To prove the first statement, by Theorem \ref{thm:main} it suffices to show that $G(r) \geq 0$.  Otherwise, if we had $G(r_2) < 0$, then for all $0 < r < r_2$ we would have either
\[
-G(r_2) \leq ( (-G(r))^{-\gamma} + \delta \gamma \log(r_2/r))^{-1/\gamma} \leq c(\delta, \gamma) \log(r_2/r)^{-1/\gamma}
\]
or (recalling that $|\cE(u_r)| \leq 1$)
\[
-G(r_2) \leq (r/r_2)^\delta (-G(r)) \leq (1+\Gamma r_2^\alpha) (r/r_2)^\delta,
\]
which is a contradiction for sufficiently small $r$.  The second statement follows by a similar argument.  The third statement follows from the first two, and \eqref{eqn:main-hyp1}.
\end{proof}

\begin{proof}[Proof of Theorem \ref{thm:main}]
For shorthand we write $E(r) = \cE(u_r)$, $F(r) = \cE(z_r)$, $\lambda(r) = \Lambda_+ r^{\alpha} + \Lambda_- r^{-\alpha}$, and we remark that \eqref{eqn:main-hyp2} implies
\begin{equation}
E  - \lambda  \leq F, \quad E' \geq (c_E/r)(F - E) - \lambda/r.
\end{equation}
We first claim there is a $\delta'(c_E, \eps) > 0$ so that
\begin{equation}
E' \geq (\delta'/r)  |E - \lambda |^{1+\gamma} - (2/r)\lambda
\end{equation}
for a.e. $r \in (r_1, r_3)$.  To prove this, we break into four cases.
\begin{enumerate}
\item $F \geq 0$ and $E - \lambda \geq 0$: Then
\begin{align*}
E' \geq (c_E/r)(F - E) - \lambda/r &\geq (c_E/r)(\eps F^{1+\gamma} - \lambda) - \lambda/r \\
&\geq (c_E \eps/r) |E - \lambda|^{1+\gamma} - (2/r) \lambda.
\end{align*}

\item $F \geq 0$ and $E - \Lambda r^\alpha \leq 0$: Then
\begin{align*}
E' \geq (c_E/r)(-E) - \lambda &\geq (c_E/r)|E -  \lambda| - (2/r)\lambda \\
&\geq (c_E/3r) |E - \lambda|^{1+\gamma} - (2/r)\lambda.
\end{align*}

\item $F \leq 0$ and $|F| \leq |E - \lambda|/2$: Since $\lambda \leq 0$, we have:
\begin{align*}
E' \geq (c_E/r)(F - (E - \lambda)) - (2/r)\lambda &\geq (c_E/2r)|E - \lambda| -  (2/r)\lambda \\
&\geq (c_E/6r)|E - \lambda|^{1+\gamma} - (2/r)\lambda.
\end{align*}

\item $F \leq 0$ and $|F| \geq |E - \lambda/2$: Then
\begin{align*}
E' \geq (c_E\eps/r)|F|^{1+\gamma} - (2/r)\lambda &\geq (c_E\eps/4r)|E - \lambda|^{1+\gamma} - (2/r)\lambda.
\end{align*}
\end{enumerate}
This proves our claim.

Fix $\delta'' = \min \{ \delta', 2^{-\gamma} (1+3/\alpha)^{-1-\gamma} \}$.  Recalling that $\max\{\Lambda_+ r_3^\alpha, \Lambda_- r_1^{-\alpha}\} \leq 1$, $\gamma \in [0, 1)$, and $\Gamma_\pm = 3\Lambda_\pm/\alpha$, we compute:
\begin{align*}
G'  &\geq (\delta''/r) |G - (1+3/\alpha)\Lambda_+ r^\alpha - (1-3/\alpha) \Lambda_- r^{-\alpha}|^{1+\gamma}  + 2\Lambda_+ r^{\alpha-1} + 2\Lambda_- r^{-\alpha-1} \\
&\geq (\delta'' 2^{-\gamma}/r)|G|^{1+\gamma} - (\Lambda_+ r^\alpha)^{1+\gamma}/r - (\Lambda_- r^\alpha)^{1+\gamma}/r + 2\Lambda_+ r^{\alpha -1} + 2\Lambda_- r^{-\alpha-1} \\
&\geq (\delta'' 2^{-\gamma}/r)|G|^{1+\gamma} .
\end{align*}
In the second line we used general inequality $|a - b|^{1+\gamma} \geq 2^{-\gamma} |a|^{1+\gamma} - |b|^{1+\gamma}$.  This proves \eqref{eqn:main-concl1}, with $\delta = \delta''/2$.

We work towards proving \eqref{eqn:main-concl2}.  For shorthand write $||v|| = ||v||_{L^2(\del B_1)}$.  We first compute, noting that $G' = E' + (3/r) \lambda$, 
\begin{align}
\left( \int_s^r ||\del_r u_r|| dr \right)^2
&\leq \log(r/s) \int_s^r \int_{\del B_1} r |\del_r u_r|^2 d\theta dr \nonumber \\
&\leq c_E^{-1} \log(r/s) \int_s^r (E' + (2/r) \lambda) dr \nonumber \\
&\leq c_E^{-1} \log(r/s)(G(r) - G(s)) \label{eqn:main1}
\end{align}

Assume for the moment that $\gamma > 0$.  For a.e. $r \in (r_2, r_3)$ we have $G' \geq (\delta/r) G^{1+\gamma}$.  Therefore, for every $r_2 \leq s \leq r < r_3$ we have
\begin{equation*}
(-1/\gamma) ( G(r)^{-\gamma} - G(s)^{-\gamma}) \geq \delta \log(r/s),
\end{equation*}
and hence
\begin{equation}\label{eqn:main2}
G(s) \leq (G(r)^{-\gamma} + \delta \gamma \log(r/s))^{-1/\gamma} .
\end{equation}
Combining \eqref{eqn:main1}, \eqref{eqn:main2}, we get
\begin{align*}
\int_s^r ||\del_r u_r|| dr 
&\leq c_E^{-1/2} \log(r/s)^{1/2} ( G(r_3)^{-\gamma} + \delta \gamma \log(r_3/r))^{-1/2\gamma}
\end{align*}

Let $s_i = e^{-e^i} r_3$.  Choose $i \leq i'$ so that $s_{i'} \geq r \geq s_{i'+1}$ and $s_{i} \geq s \geq s_{i+1}$.  Then we estimate
\begin{align}
\int_s^r ||\del_r u_r|| dr 
&\leq \sum_{j=i'}^i \int_{\max\{s_{j+1}, r_2\}}^{s_j} ||\del_r u_r|| dr \nonumber \\
&\leq c \sum_{j=i'}^i \log(s_j/s_{j+1})^{1/2} (G(r_3)^{-\gamma} + \delta\gamma \log(r_3/s_j))^{-1/2\gamma} \nonumber \\
&\leq c \sum_{j=i'}^i e^{j/2} ( G(r_3)^{-\gamma} + \delta \gamma e^j)^{-1/2\gamma} \nonumber \\
&\leq c (G(r_3)^{-\gamma} + \delta \gamma e^{i'})^{(\gamma-1)/2\gamma} \nonumber \\
&\leq c(c_E, \delta, \gamma)(G(r_3)^{-\gamma} + \delta\gamma \log(r_3/r))^{(\gamma-1)/2\gamma}. \label{eqn:main4}
\end{align}
In the penultimate line we used the inequality
\[
\sum_{j=i'}^\infty e^{j/2} (a+e^j)^{-\beta/2} \leq c(\beta) (a+e^{i'})^{(1-\beta)/2}
\]
for any $\beta > 1$, $a \geq 0$, $i' \geq 0$.

Similarly, if instead $r_1 < s \leq r \leq r_2$, then we have
\[
-G(r) \leq ((-G(s))^{-\gamma} + \delta \gamma \log(r/s))^{-1/\gamma}
\]
and hence
\begin{align*}
\int_s^r ||\del_r u_r|| dr 
&\leq c_E^{-1} \log(r/s)^{1/2}(-G(s))^{1/2} \\
&\leq c_E^{-1} \log(r/s)^{1/2}( (-G(r_1))^{-\gamma} + \delta \gamma \log(s/r_1))^{-1/2\gamma}.
\end{align*}
Therefore, taking now $s_i = e^{e^i} r_1$ we can argue as above to get
\begin{equation}
\int_s^r ||\del_r u_r|| dr \leq c(c_E, \gamma, \delta) ( (-G(r_1))^{-\gamma} + \delta \gamma \log(s/r_1))^{(\gamma-1)/2\gamma}. \label{eqn:main6}
\end{equation}
Together, combining \eqref{eqn:main4}, \eqref{eqn:main6} gives the Dini estimate \eqref{eqn:main-concl2}, \eqref{eqn:main-concl3} in the case when $\gamma > 0$.

Assume now $\gamma = 0$.  Then \eqref{eqn:main-concl1} becomes $G' \geq (\delta/r) |G|$.  If $r_2 \leq s < r < r_3$, then we have
\[
G(s) \leq (s/r)^\delta G(r),
\]
and hence, arguing as before but with $s_i = e^{-i} r_3$, we have
\[
\int_s^r ||\del_r u_r|| dr \leq c(c_E, \delta) G(r_3)^{1/2} (r/r_3)^{\delta/2}.
\]
If instead $r_1 < s < r \leq r_2$, then we have
\[
-G(r) \leq (s/r)^{\delta} (-G(s)),
\]
and (arguing as before but with $s_i = e^i r_1$)
\[
\int_s^r ||\del_r u_r|| dr \leq c(c_E, \delta) (-G(r_1))^{\delta/2} (r_1/s)^{\delta/2} .
\]
This proves \eqref{eqn:main-concl2}, \eqref{eqn:main-concl3} when $\gamma = 0$.
\end{proof}

\section{Minimizers of the Alt-Caffarelli functional}\label{sec:ac}

We consider here the Alt-Caffarelli functional $J$ as defined in Example \ref{ex:ac}.  We shall use the same notation as Example \ref{ex:ac}, and in particular for the duration of this Section we fix a $1$-homogenous minimizer $u_0(x = r\theta) = r \sigma(\theta)$ which is regular away from $0$.

Recall that as outlined in Example \ref{ex:ac} the key point is that if $u \in H^1(B_1)$ is a minimizer for $J$ and satisfies \eqref{eqn:intro-ac-ass1}, \eqref{eqn:intro-ac-ass2}, then for all $r \in (\rho, 1)$ we have the monotonicity
\begin{equation}\label{eqn:ac-mono}
\frac{d}{dr} \cE(u_r) = \frac{n}{r}(\cE(z_r) - \cE(u_r)) + \frac{1}{r} \int_{\del B_1} r^2 |\del_r u_r|^2 
\end{equation}
and (by Theorem \ref{thm:ac-epi}) the symmetric log-epiperimetric inequality
\begin{equation}\label{eqn:ac-epi}
\cE(u_r) \leq \cE(z_r) - \eps |\cE(z_r)|^{1+\gamma} \quad \forall r \in (\rho, 1),
\end{equation}
for $\cE(u) = W(u) - W(u_0)$, $W(u) = W^1_0(u) + |\{ u > 0 \} \cap B_1|$, and $z_r(x) = |x| u_r(x/|x|)$ the $1$-homogenous extension of $u_r$.  \eqref{eqn:ac-mono}, \eqref{eqn:ac-epi} together with Theorem \ref{thm:main} gives a decay-growth estimates \eqref{eqn:intro-ac-grow}, \eqref{eqn:intro-ac-decay} and the Dini estimate
\begin{equation}\label{eqn:ac-dini}
\int_{\rho}^{1} ||\del_r u_r||_{L^2(\del B_1)} dr \leq c( \left[\cE(\rho)_-\right]^{\frac{1-\gamma}{2}} + \left[\cE(1)_+\right]^{\frac{1-\gamma}{2}}) .
\end{equation}

In Subsection \ref{ssec:ac} we prove the symmetric log-epiperimetric inequality (Theorem \ref{thm:ac-epi}).  In the remainder of this Section we highlight and prove several applications.  The first and primary consequence is that regularity and graphicality propogate both outward and inward while the density remains close to constant.
\begin{theorem}[Small density drop implies graphical + estimates]\label{thm:ac-param}
Given $\eta > 0$, there are $\delta(u_0, \eta), \eps(u_0) > 0$, $\gamma(u_0) \in [0, 1)$ so that the following holds.  Let $u \in H^1(B_1)$ minimize $J$, and suppose there is a $1/4 \geq \rho \geq 0$ so that
\begin{equation}\label{eqn:ac-param-hyp1}
\min\{ ||u - u_0||_{L^2(\del B_1)}, ||u_{\rho/2} - u_0||_{L^2(\del B_1)} \} \leq \delta,
\end{equation}
and
\begin{equation}\label{eqn:ac-param-hyp2}
W(u) \leq W(u_0) + \delta, \quad W(u_{\rho/2}) \geq W(u_0) - \delta .
\end{equation}

Then we have
\begin{equation}\label{eqn:ac-param-concl1}
\sup_{r \in [\rho, 1/2]} ||u_r - u_0||_{H^1(\del B_1)} \leq \eta, \quad \sup_{r \in [2\rho, 1/2]} ||u_r -u_0||_{H^1(A_{1, 1/2})} \leq \eta,
\end{equation}
and we can find a $\xi : C\Omega \cap A_{1/2, \rho} \to \R$ so that 
\begin{equation}\label{eqn:ac-param-concl2}
\del \{ u > 0 \} \cap A_{1/2, \rho} = G_{C\Omega}(\xi) \cap A_{1/2, \rho}, \quad \sup_{r \in [2\rho, 1/2]} ||\xi_r||_{C^{2,\alpha}(C\Omega \cap A_{1, 1/2})} \leq \eta.
\end{equation}
Moreover, we have the Dini estimate
\begin{equation}\label{eqn:ac-param-concl3}
\int_{\rho}^{1/2} ||\del_r u_r||_{L^2(\del B_1)} dr \leq c(u_0) \delta^{(1-\gamma)/2}.
\end{equation}
\end{theorem}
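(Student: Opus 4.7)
The plan is a continuation argument in the scale parameter $r$, combining three ingredients: Weiss monotonicity \eqref{eqn:ac-mono} of $r \mapsto \cE(u_r)$, Alt--Caffarelli $\eps$-regularity (which, thanks to the smoothness of $u_0$ away from $0$, promotes $L^2(\del B_1)$-closeness of $u_r$ to $u_0$ into $C^{2,\alpha}$-closeness plus a free-boundary graph on the interior annulus $A_{1,1/2}$), and the decay--growth estimate Theorem \ref{thm:main} fed by the symmetric log-epi Theorem \ref{thm:ac-epi}. As a preliminary observation, Weiss monotonicity and \eqref{eqn:ac-param-hyp2} immediately yield the two-sided energy bound $|\cE(u_r)| \leq \delta$ on all of $[\rho/2, 1]$. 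Without loss of generality I assume the first alternative in \eqref{eqn:ac-param-hyp1}, i.e.\ $||u - u_0||_{L^2(\del B_1)} \leq \delta$; the other case is symmetric, seeding from the bottom scale $\rho/2$ and propagating upward. Choosing $\delta$ small in terms of $\eta$ and the thresholds $\delta(\sigma), \eps(\sigma)$ of Theorem \ref{thm:ac-epi}, $\eps$-regularity at scale one gives $||u_r - u_0||_{C^{2,\alpha}(A_{1,1/2})} \leq \eta/2$ together with the free-boundary graph bound $||\xi_r||_{C^{2,\alpha}} \leq \eta/2$ for all $r$ in some initial interval $(r_*, 1]$, $r_* < 1/2$.

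Next I would define $\rho^* := \inf\{\, r' \in [\rho, 1/2] : \text{\eqref{eqn:ac-param-concl1} and \eqref{eqn:ac-param-concl2} hold on } [r', 1/2] \text{ with threshold } \eta/2 \,\}$ and argue $\rho^* = \rho$ by contradiction. If $\rho^* > \rho$, then the hypotheses of Theorem \ref{thm:ac-epi} are met at every scale $r \in (\rho^*, 1)$, so we have the symmetric log-epi \eqref{eqn:ac-epi} on this interval. Together with \eqref{eqn:ac-mono} this fits the abstract framework of Theorem \ref{thm:main}, and invoking it with Remark \ref{rem:diff-ineq} yields
\[
||u_{\rho^*} - u_{1/2}||_{L^2(\del B_1)} \leq \int_{\rho^*}^{1/2} ||\del_r u_r||_{L^2(\del B_1)}\, dr \leq c(u_0)\, \delta^{(1-\gamma)/2}.
\]
Combined with the seed $||u_{1/2} - u_0||_{L^2(\del B_1)} \leq \eta/2$, we obtain $||u_{\rho^*} - u_0||_{L^2(\del B_1)} \leq \eta/2 + c(u_0)\delta^{(1-\gamma)/2} < \eta$ after shrinking $\delta$. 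A second application of $\eps$-regularity at scale $\rho^*$ (valid because $\rho^* > \rho$ keeps the cone tip quantitatively separated from $A_{\rho^*, \rho^*/2}$) upgrades this $L^2$ bound to $C^{2,\alpha}$-graphicality on an open neighborhood below $\rho^*$, contradicting the infimum definition. Hence $\rho^* = \rho$, establishing \eqref{eqn:ac-param-concl1} and \eqref{eqn:ac-param-concl2}; the Dini estimate \eqref{eqn:ac-param-concl3} then drops out of Theorem \ref{thm:main} applied on $[\rho, 1]$, using $|\cE(u_r)| \leq \delta$.

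The main obstacle is the boundary behaviour near $r = \rho$: $\eps$-regularity is quantitative only when the cone tip is bounded away from the annulus under consideration, which is exactly why the rescaled $C^{2,\alpha}$ conclusions in \eqref{eqn:ac-param-concl1}, \eqref{eqn:ac-param-concl2} are stated only for $r \geq 2\rho$ (so that in unscaled coordinates the controlled annulus $A_{r, r/2}$ stays inside $A_{1/2, \rho}$), while the raw $L^2(\del B_1)$ control of $u_r$ extends all the way down to $r = \rho$ because it does not require an $\eps$-regularity upgrade. A secondary technical point is to carry the slightly tighter threshold $\eta/2$ through the bootstrap so that the open condition defining $\rho^*$ passes to the limit, which is where the Dini estimate $\delta^{(1-\gamma)/2}$ provides exactly the right amount of slack.
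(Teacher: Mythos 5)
Your proposal is correct and follows essentially the same route as the paper's proof: a continuation argument in the scale $r$, seeded by Lemma \ref{lem:ac-extend} (the $\eps$-regularity upgrade), using Weiss monotonicity together with \eqref{eqn:ac-param-hyp2} to get the two-sided energy bound, invoking Theorem \ref{thm:main} with the symmetric log-epi \eqref{eqn:ac-epi} to obtain the Dini estimate, and closing the bootstrap via \eqref{eqn:diff-ineq} plus a further application of Lemma \ref{lem:ac-extend} to push the graphicality interval below $\rho_*$. The technical remark about separating the scales $2\rho$ (for $C^{2,\alpha}$ graphicality) from $\rho$ (for $L^2$ control) matches the paper's formulation of \eqref{eqn:ac-param-concl1}--\eqref{eqn:ac-param-concl2}.
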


\begin{proof}
Assume that $||u - u_0||_{L^2(\del B_1)} \leq \delta$.  The other case is essentially the same.  Let $\rho_*$ be the least radius so that \eqref{eqn:ac-param-concl1}, \eqref{eqn:ac-param-concl2}, \eqref{eqn:ac-param-concl3} hold with $\rho_*$ in place of $\rho$.  By Lemma \ref{lem:ac-extend}, ensuring $\delta(\eta, u_0)$ is sufficiently small, we can assume that $\rho_* \leq \max\{\rho, 1/4\}$.  Moreover, ensuring $\delta(\eta', u_0)$ is small, Lemma \ref{lem:ac-extend} implies we can assume that
\begin{equation}\label{eqn:ac-param-1}
||u_{1/2} - u_0||_{L^2(\del B_1)} \leq \eta'.
\end{equation}

Since there is no loss in generality in assuming $\eta(u_0)$ is small, by \eqref{eqn:ac-mono} and \eqref{eqn:ac-epi} we can apply Theorem \ref{thm:main} with $\cE(v) = W(v) - W(u_0)$ and $\Lambda = 0$ to deduce that
\[
\int_{\rho_*}^{1/2} ||\del_r u_r||_{L^2(\del B_1)} dr \leq c(u_0) \delta^{(1-\gamma)/2}.
\]
From \eqref{eqn:ac-param-1} and \eqref{eqn:diff-ineq} we get
\[
||u_r - u_0||_{L^2(\del B_1)} \leq \eta' + c(u_0) \delta^{(1-\gamma)/2} \quad \forall \rho_* \leq r \leq 1/2.
\]
If $\rho_* > \rho$, then provided $\eta'(\eta, u_0)$, $\delta(u_0, \eta', \eta)$ are small we can use Lemma \ref{lem:ac-extend} to deduce a contradiction.
\end{proof}

\begin{lemma}\label{lem:ac-extend}
Given $\eps > 0$, there is a $\delta(u_0, \eps) > 0$ so that the following holds.  Suppose $u \in H^1(B_1)$ minimizes $J$, and satisfies
\begin{gather}
\min\{||u - u_0||_{L^2(\del B_1)}, ||u_{1/8} - u_0||_{L^2(\del B_1)} \}  \leq \delta, \\
W(u) \leq W(u_0) + \delta, \quad W(u_{1/8}) \geq W(u_0) - \delta.
\end{gather}
Then
\begin{gather}
\sup_{r \in [1/4, 1/2]} ||u_r - u_0||_{H^1(\del B_1)} \leq \eps, \quad ||u - u_0||_{H^1(A_{1/2, 1/4})} \leq \eta ,\\
\del \{ u > 0 \} \cap A_{1/2, 1/4} = G_{C\Omega}(\xi) \cap A_{1/2, 1/4}, \quad ||\xi||_{C^{2,\alpha}(C\Omega \cap A_{1/2, 1/4})} \leq \eps.
\end{gather}
\end{lemma}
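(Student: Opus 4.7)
The plan is to argue by contradiction and compactness, using Weiss monotonicity to identify any subsequential limit as $u_0$ on the annulus $A_{1,1/8}$, and then invoking the Alt--Caffarelli $\epsilon$-regularity theory to upgrade convergence to $C^{2,\alpha}$ on the smaller annulus $A_{1/2,1/4}$. Suppose the conclusion fails. Then there is a sequence of minimizers $u_k \in H^1(B_1)$ with $\delta_k \to 0$ satisfying the hypotheses but violating at least one of the conclusions. By the uniform Lipschitz bound, non-degeneracy, and minimality-preserving compactness of \cite{AlCa}, we may extract a subsequence $u_k \to \tilde u$ in $C^{0,\alpha}_{loc}(B_1) \cap H^1_{loc}(B_1)$, where $\tilde u$ is itself a $J$-minimizer and the corresponding free boundaries converge in Hausdorff distance on compact subsets of $B_1$.

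The crux is to identify $\tilde u = u_0$ on $A_{1,1/8}$. Weiss monotonicity forces $r \mapsto W(u_{k,r})$ to be non-decreasing, so the two energy hypotheses sandwich $|W(u_{k,r}) - W(u_0)| \leq 2\delta_k$ uniformly for $r \in [1/8,1]$. Integrating \eqref{eqn:ac-mono} over this range and passing to the limit gives $\partial_r \tilde u_r \equiv 0$ on $A_{1,1/8}$, so $\tilde u$ is $1$-homogeneous there. The \emph{main obstacle} is that the $L^2$-closeness hypothesis only pins $u_k$ to $u_0$ at one of the scales $r=1$ or $r=1/8$, not both; however, this single-scale closeness, combined with $C^{0,\alpha}_{loc}$-convergence and the $1$-homogeneity of the limit, suffices to conclude $\tilde u = u_0$ on all of $A_{1,1/8}$, since a $1$-homogeneous function agreeing with another $1$-homogeneous function on one sphere agrees on every sphere.

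Finally, since $u_0$ is regular along $\partial\{u_0 > 0\} \cap A_{1,1/8}$ (a smooth cone away from the origin), the Alt--Caffarelli $\epsilon$-regularity theorem of \cite{AlCa} applies: the $L^\infty$-closeness of $u_k$ to $u_0$ on compact subsets of $A_{1,1/8}$ (which follows from the previous two steps) implies, for $k$ large, that $\partial\{u_k > 0\} \cap A_{1/2,1/4}$ is the graph $G_{C\Omega}(\xi_k)$ of some $\xi_k$ with $\|\xi_k\|_{C^{2,\alpha}(C\Omega \cap A_{1/2,1/4})} \to 0$. Standard interior elliptic estimates for $u_k$ in $\{u_k > 0\}$ together with boundary Schauder estimates at the free-boundary condition $|\nabla u_k| = 1$ then upgrade the convergence $u_k \to u_0$ to $C^{2,\alpha}$ on compact subannuli, yielding both the $H^1(A_{1/2,1/4})$ bound and the $H^1(\partial B_r)$ bound for $r \in [1/4,1/2]$, contradicting the failure of the conclusion.
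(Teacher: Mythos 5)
Your overall strategy---compactness, contradiction, Weiss monotonicity to homogenize the limit on the annulus, then Alt--Caffarelli $\eps$-regularity to upgrade to graphical closeness---is exactly the kind of "straightforward argument by contradiction" the paper attributes to \cite[Lemma 4.1]{EnSpVe}, and Steps $1$--$3$ and $5$ of your outline are sound: the sandwich $W(u_0)-\delta_k\le W(u_{k,r})\le W(u_0)+\delta_k$ for $r\in[1/8,1]$ does follow from monotonicity and the two energy hypotheses, and integrating the Weiss identity does kill $\partial_r\tilde u_r$ on $A_{1,1/8}$.

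The genuine gap is in Step $4$. You must treat the two branches of the $\min$ in hypothesis \eqref{eqn:ac-param-hyp1} separately, and you only really handle one of them. If the closeness holds at scale $1/8$, i.e.\ $\|u_k-u_0\|_{L^2(\partial B_{1/8})}\to 0$, then $\partial B_{1/8}$ is a compact subset of $B_1$ and your $C^{0,\alpha}_{loc}$ (or even $H^1_{loc}$) convergence pins $\tilde u= u_0$ on that sphere, after which $1$-homogeneity propagates it across $A_{1,1/8}$. But if the closeness holds at scale $1$, i.e.\ $\|u_k-u_0\|_{L^2(\partial B_1)}\to 0$, then $\partial B_1$ is the topological boundary of the domain, and $C^{0,\alpha}_{loc}(B_1)$ convergence says nothing about traces there; invoking it at this point is a non sequitur, and there is no interior sphere on which you have yet identified $\tilde u$. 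Two standard repairs are available. Either observe that the hypotheses give a uniform $H^1(B_1)$ bound on $u_k$ (the Weiss bound plus $\|u_k\|_{L^2(\partial B_1)}\le\|u_0\|_{L^2(\partial B_1)}+\delta_k$ control $\int_{B_1}|Du_k|^2$, then Poincar\'e with boundary term controls $\int_{B_1}u_k^2$), so $u_k\rightharpoonup\tilde u$ in $H^1(B_1)$ and the trace $\tilde u|_{\partial B_1}$ is the weak $L^2(\partial B_1)$ limit $u_0$; or, use the Weiss integral bound together with Cauchy--Schwarz, namely
\begin{equation*}
\|u_{k,r}-u_{k,1}\|_{L^2(\partial B_1)}\le\Big(\log\tfrac1r\Big)^{1/2}\Big(\int_r^1\rho\int_{\partial B_1}|\partial_\rho u_{k,\rho}|^2\,d\rho\Big)^{1/2}\le c\,\delta_k^{1/2},
\end{equation*}
to propagate the $L^2(\partial B_1)$-closeness inward to a compact sphere before passing to the limit (this is the same inequality as \eqref{eqn:diff-ineq} combined with \eqref{eqn:main1}). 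With either repair Step $4$ closes and the rest of your argument goes through.
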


\begin{proof}
Straightforward argument by contradiction, similar to \cite[Lemma 4.1]{EnSpVe}.
\end{proof}

\vspace{3mm}

Combining Theorems \ref{thm:ac-param}, \ref{thm:main} gives directly uniqueness at at $0$ and $\infty$.  We state here only the uniqueness at $\infty$; uniqueness at $0$ was first proven in \cite{EnSpVe}.
\begin{cor}[Uniqueness at infinity]\label{cor:ac-unique}
Let $u \in H^1_{loc}(\R^n)$ be an entire minimizer of $J$.  Suppose, for some $r_i \to \infty$, $u_{r_i} \to u_0$ in $L^2_{loc}$. Then there are constants $\gamma(u_0) \in [0, 1)$, $\delta(u_0) > 0$, $C(u)$, so that
\[
||u_r - u_0||_{L^2(\del B_1)} \leq \left\{ \begin{array}{l l}C \log(r)^{\frac{\gamma-1}{2\gamma}} & \gamma > 0 \\ C r^{-\frac{\delta}{2}} & \gamma = 0 \end{array} \right. \quad \forall r > 1.
\]
Here $\gamma$ as in Theorem \ref{thm:ac-epi}.
\end{cor}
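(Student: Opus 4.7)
The plan is to combine the subsequential blow-down hypothesis with Theorem \ref{thm:ac-param} to lift closeness from the scales $r_i$ to all large scales, and then invoke the decay-growth Theorem \ref{thm:main} on the exterior $\R^n \setminus B_{R_0}$ for a single, large reference scale $R_0 = R_0(u)$. To begin, I show that $\cE(u_r) := W(u_r) - W(u_0)$ is non-positive on $(0,\infty)$ with $\cE(u_r) \nearrow 0$ as $r \to \infty$. By \eqref{eqn:ac-mono}, $r \mapsto \cE(u_r)$ is non-decreasing. Standard Alt--Caffarelli compactness upgrades $u_{r_i} \to u_0$ in $L^2_{\mathrm{loc}}$ to strong $H^1_{\mathrm{loc}}$ convergence and Hausdorff convergence of the free boundaries, hence $W(u_{r_i}) \to W(u_0)$ and $\cE(u_{r_i}) \to 0$; monotonicity then forces $\cE(u_r) \leq 0$ for all $r$ and $\cE(u_r) \to 0$.

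Fix a large $i_0$ and set $R_0 := r_{i_0}/2$, so that $u_{R_0/2}$ is as close to $u_0$ in $H^1(\del B_1)$ and has Weiss energy as close to $W(u_0)$ as we need for Theorem \ref{thm:ac-param}'s smallness hypotheses \eqref{eqn:ac-param-hyp1}, \eqref{eqn:ac-param-hyp2}. For each outer scale $R \geq 2 R_0$, I apply Theorem \ref{thm:ac-param} to $v = u_R$ on $B_1$ with $\rho := R_0/R \leq 1/4$. The key observation is that $v_{\rho/2} = u_{R_0/2}$ is \emph{independent of $R$}, so the single choice of $R_0$ ensures all hypotheses uniformly in $R$, while $W(v) = W(u_R) \leq W(u_0)$ is automatic from the first step. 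The conclusion gives $H^1(\del B_1)$-closeness of $v_r$ to $u_0$ and $C^{2,\alpha}$-graphicality of $\del \{u > 0\}$ over $C\Omega$ on $A_{R/2, R_0}$, uniformly in $R$, and hence the free boundary of $u$ is uniformly graphical over $C\Omega$ on the entire exterior $\R^n \setminus B_{R_0}$.

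With uniform $H^1(\del B_1)$-closeness and $C^{2,\alpha}$-graphicality in place on $[R_0,\infty)$, Theorem \ref{thm:ac-epi} yields the symmetric log-epiperimetric inequality \eqref{eqn:ac-epi} at every scale $r \geq R_0$, and together with \eqref{eqn:ac-mono} all the hypotheses of Theorem \ref{thm:main} are satisfied on $(R_0,\infty)$ with $\Lambda_\pm = 0$. Since $G \equiv \cE(u_r) \leq 0$ throughout, I am entirely in the decay regime of \eqref{eqn:main-concl4}; using $-\cE(u_{R_0}) \leq 1$ so that the first term in the parenthesis is bounded below, the estimate simplifies, for $R_0 \leq s \leq r$, to
\begin{equation*}
\int_s^r \|\del_t u_t\|_{L^2(\del B_1)}\, dt \leq
\begin{cases} C \log(s/R_0)^{(\gamma-1)/(2\gamma)}, & \gamma > 0, \\ C (R_0/s)^{\delta/2}, & \gamma = 0. \end{cases}
\end{equation*}
Letting $r \to \infty$ along $r_i$ and using $\|u_{r_i} - u_0\|_{L^2(\del B_1)} \to 0$ with \eqref{eqn:diff-ineq} yields the claimed rate for $r \geq R_0$; the range $1 \leq r \leq R_0$ is absorbed into $C = C(u)$ (the right-hand side is bounded below there, while $\|u_r - u_0\|_{L^2}$ is bounded on any compact interval by local Lipschitz regularity).

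The main obstacle is the second step: bootstrapping subsequential closeness at $r = r_i$ to uniform control on all of $\R^n \setminus B_{R_0}$. The trick of choosing $\rho = R_0/R$ so that $v_{\rho/2}$ evaluates to the fixed function $u_{R_0/2}$ independent of $R$ is precisely what makes a single application of Theorem \ref{thm:ac-param} per outer scale succeed from one initial choice of $R_0$, avoiding any dyadic induction; this is the main payoff of having stated Theorem \ref{thm:ac-param} with the two alternative smallness hypotheses.
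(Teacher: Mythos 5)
Your proposal is correct and follows the approach the paper intends: the text preceding Corollary \ref{cor:ac-unique} states it follows "by combining Theorems \ref{thm:ac-param}, \ref{thm:main}," which is exactly what you do (using Theorem \ref{thm:ac-param}, with the clever choice $\rho = R_0/R$ so that $v_{\rho/2} = u_{R_0/2}$ is $R$-independent, to propagate graphicality to all of $\R^n \setminus B_{R_0}$, then invoking the decay branch \eqref{eqn:main-concl4} since $\cE \leq 0$). Only a trivial bookkeeping slip: requiring $\rho = R_0/R \leq 1/4$ means you need $R \geq 4R_0$, not $R \geq 2R_0$, but this does not affect the argument.
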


A less obvious corollary is the following one-sided perturbation theorem.  \cite{DeJeSh} have classified entire minimizers to $J$ which lie to one side of $u_0$ as fitting inside a Hardt-Simon-type foliation.  Specifically, they have shown
\begin{theorem}[\cite{DeJeSh}]\label{thm:ac-foliation}
There exist regular, entire minimizers $\underline{u} \leq u_0 \leq \overline{u} \in H^1_{loc}(\R^n)$ to $J$, asymptotic to $u_0$ at infinity, with the property that if $u \in H^1_{loc}(\R^n)$ is an entire minimizer to $J$ and $u \leq u_0$ (resp. $u \geq u_0)$, then either $u = \underline{u_t}$ (resp. $u = \overline{u_t}$) for some $t > 0$, or $u = u_0$.
\end{theorem}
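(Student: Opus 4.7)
This is a Hardt--Simon-style foliation result adapted to the one-phase Alt--Caffarelli problem. The plan is to proceed in three stages: construct the candidate leaf $\underline{u}$ as a variational limit; establish its regularity and the asymptotic $\underline{u_t} \to u_0$ at infinity; and classify all one-sided entire minimizers by a sliding / strong maximum principle argument. By symmetry it suffices to treat $\underline{u}$ and the case $u \leq u_0$.

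\textbf{Construction, regularity, asymptotics.} I would build $\underline{u}$ by minimizing $J$ on each large ball $B_R$ subject to the one-sided constraint $v \leq u_0$ with boundary datum a small negative perturbation of $u_0|_{\del B_R}$ (for instance, pushing $u_0$ strictly below itself in the direction of the first Dirichlet eigenfunction $\phi_1^\Omega$). Standard Alt--Caffarelli Lipschitz estimates and compactness let me send $R \to \infty$ to obtain an entire minimizer with $0 \not\equiv \underline{u} \leq u_0$ and $\underline{u} \not\equiv u_0$. Smoothness of $\underline{u}$ follows from a blow-up dichotomy: any singular blow-up would be a $1$-homogeneous minimizer $u_* \leq u_0$, and the strong maximum principle on $\{u_0 > 0\}$ together with the free-boundary Hopf principle at the regular part of $\del\{u_0 > 0\}$ force $u_* \in \{0, u_0\}$, either of which contradicts nontriviality or $\underline{u} \not\equiv u_0$. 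The same dichotomy applied to blow-downs yields $\cE(\underline{u}_r) \to 0$ as $r \to \infty$, where $\cE(u) = W(u) - W(u_0)$, so Corollary \ref{cor:unique} with $r_3 = \infty$ promotes subsequential to genuine convergence $\underline{u_t} \to u_0$ with an explicit rate.

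\textbf{Sliding classification.} Given any entire minimizer $u \leq u_0$ with $u \not\equiv u_0$, set $t^* = \sup\{t > 0 : \underline{u_t} \leq u\}$. The rate from Corollary \ref{cor:unique} combined with $u \not\equiv u_0$ bounds $t^*$ above, while the small-scale concentration of $\underline{u_t}$ as $t \to 0$ bounds it below, so $t^* \in (0, \infty)$. At $t = t^*$ the leaves $\underline{u_{t^*}} \leq u$ touch at some $x_0 \in \R^n$. If $u(x_0) > 0$, the interior strong maximum principle for the common harmonic equation forces $\underline{u_{t^*}} \equiv u$ on the connected component, and global equality follows from real-analytic continuation of harmonic functions plus smoothness of both free boundaries. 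If $x_0$ is a common free-boundary point, smoothness of $\del\{\underline{u_{t^*}} > 0\}$ and the one-phase Hopf principle (the Neumann condition $|Du| = 1$ at regular free boundary points) forbid tangential strict inequality, so again $u \equiv \underline{u_{t^*}}$.

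\textbf{Main obstacle.} The technical heart will be the free-boundary Hopf step: two regular one-phase free boundaries touching tangentially with matching Neumann data $|Du| = 1$ must coincide. This is the Alt--Caffarelli analog of the strong maximum principle for smooth minimal hypersurfaces, and it is what ultimately rigidifies the foliation. A secondary challenge will be showing that the one-sided variational construction yields a leaf smooth \emph{everywhere}: the Hardt--Simon dichotomy exploits one-sidedness to exclude singularities that the interior regularity theory alone would allow.
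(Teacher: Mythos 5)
The theorem you were asked to prove is stated in the paper as \textbf{Theorem \ref{thm:ac-foliation}} and attributed outright to \cite{DeJeSh}; the paper imports it as an external result and provides no proof. There is therefore no internal argument for me to compare your sketch against, and indeed the route you propose — in particular invoking Corollary \ref{cor:unique} of \emph{this} paper to upgrade subsequential to full blow-down convergence of the leaf — could not have been the route taken by \cite{DeJeSh}, since that machinery (the symmetric log-epiperimetric inequality) is new here.

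Taken on its own terms, the sketch is a reasonable Hardt--Simon template but has concrete gaps. First, the \emph{construction} step is underspecified: a one-sided minimization in $B_R$ with boundary data pushed slightly below $u_0$ can, as $R \to \infty$, degenerate to $u_0$ itself or to $0$; one must pin the scale (e.g.\ via sub/supersolutions built from the Jacobi field $\phi_1^\Omega$ of the linearized operator, with a normalization that survives the limit) to guarantee $0 \not\equiv \underline{u} \not\equiv u_0$. Second, in the \emph{regularity dichotomy} you quote the constraint $u_* \leq u_0$ for every singular blow-up, but at a free-boundary point $x_0 \neq 0$ of $\underline{u}$ the blow-up inherits the comparison against the \emph{tangent cone of $u_0$ at $x_0$} — a half-plane solution $(x \cdot e)_+$ when $x_0$ is a regular free-boundary point of $u_0$, no constraint at all when $x_0 \in \{u_0 > 0\}$; the reduction is therefore to a classification of one-sided $1$-homogeneous minimizers under a half-plane (and separately, under $u_0$ at $x_0 = 0$), not to an interior strong maximum principle as written. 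Third, the lower bound $t^* > 0$ in your sliding step is not actually established: ``small-scale concentration of $\underline{u}_t$ as $t \to 0$'' is not a recognized mechanism here, and one needs a genuine initial comparison $\underline{u}_{t_0} \leq u$ for some $t_0 > 0$ (usually obtained from the asymptotics of $\underline{u}$ together with the normalization of $u$ at infinity) before the supremum defining $t^*$ is even over a nonempty set. The one genuinely load-bearing ingredient you do identify correctly is the free-boundary strong maximum principle at a tangential touching point, which in this paper is the content of \cite{EdSpVe}.
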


By arguing analogously to \cite{Ed}, the following local version of Theorem \ref{thm:ac-foliation} holds.
\begin{theorem}[Local Liouville-type theorem]\label{thm:ac-lious}
Given $\eps > 0$, there is a $\delta(u_0, \eps) > 0$ so that the following holds.  Let $u \in H^1(B_1)$ minimize $J$,
and suppose that
\begin{equation}\label{eqn:ac-lious-hyp}
||u - u_0||_{L^2(B_1)} \leq \delta, \quad u \leq u_0.
\end{equation}

Then either $u = u_0$, or there is a $0 < t \leq \eps$ so that
\begin{equation}\label{eqn:ac-lious-concl}
||u_r - \underline{u}_{rt}||_{L^2(A_{1, 1/2})} \leq \eps \quad \forall 0 < r \leq 1.
\end{equation}
In particular, either $\del \{ u > 0 \} = \del \{ u_0 > 0 \} \cap B_1$, or $\del \{ u > 0 \} \cap B_{1/2}$ is regular, and a small analytic perturbation of $\del \{ \underline{u}_t > 0\} \cap B_{1/2}$.

If one assumes $u \geq u_0$, then the same conclusion holds with $\overline{u}$ in place of $\underline{u}$.
\end{theorem}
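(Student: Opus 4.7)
The plan is a contradiction-compactness argument modeled on the local Liouville theorems of \cite{Ed}: the Dini estimate of Theorem \ref{thm:ac-param} propagates closeness across scales, while the Hardt--Simon foliation (Theorem \ref{thm:ac-foliation}) classifies the resulting blow-up. Suppose the theorem fails at some $\eps_0 > 0$: there is a sequence of minimizers $u_i$ of $J$ on $B_1$ with $u_i \leq u_0$, $u_i \not\equiv u_0$, $||u_i - u_0||_{L^2(B_1)} \to 0$, for which no $t \in (0, \eps_0]$ satisfies \eqref{eqn:ac-lious-concl}.

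First I would verify that Theorem \ref{thm:ac-param} applies down to $\rho = 0$. The upper bound $W(u_i) \leq W(u_0) + o(1)$ on a slightly smaller ball follows from an $H^1$-Fubini slicing on $\del B_{1-\eta}$ combined with the $\eps$-regularity of \cite{AlCa}. The lower bound at small scales holds because $u_i \leq u_0$ forces the tangent cone of $u_i$ at the origin to be a $1$-homogeneous minimizer majorized by $u_0$, and for $\delta$ small this tangent must equal $u_0$ itself (by rigidity of homogeneous minimizers near $u_0$). Theorem \ref{thm:ac-param} then yields graphs $\xi_i$ on $C\Omega \cap B_{1/2}$ with $C^{2,\alpha}$ norm $\to 0$ and the Dini bound $\int_0^{1/2} ||\del_r (u_i)_r||_{L^2(\del B_1)} \, dr \to 0$.

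Next I would choose a scale $T_i \to 0$ capturing the leading deviation of $u_i$ from $u_0$, e.g.\ so that $||(u_i)_{T_i} - u_0||_{L^2(\del B_1)} = \eta_\ast$ for a fixed small constant $\eta_\ast > 0$ (if no such scale exists, $\eps$-regularity of \cite{AlCa} directly forces $u_i = u_0$ on a ball of definite size, contradicting $u_i \not\equiv u_0$). Rescale $v_i := (u_i)_{T_i}$, a minimizer of $J$ on $B_{1/T_i}$ with $v_i \leq u_0$. Standard Alt--Caffarelli compactness combined with the uniform Dini bound yields a subsequential limit $v_\infty$, an entire minimizer $\leq u_0$ asymptotic to $u_0$ at infinity (since the Dini bound for $u_i$ controls $(u_i)_{T_i R} - u_0$ for any fixed $R$ as $i \to \infty$). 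Theorem \ref{thm:ac-foliation} then forces $v_\infty \in \{u_0\} \cup \{\underline{u}_s : s > 0\}$; the case $v_\infty = u_0$ is excluded by the normalization $||v_i - u_0||_{L^2(\del B_1)} = \eta_\ast$, so $v_\infty = \underline{u}_{s_0}$ for some bounded $s_0 > 0$.

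To conclude, combine the Dini estimate \eqref{eqn:ac-dini} applied to $u_i$ with the analogous Dini estimate applied to the entire leaf $\underline{u}_{T_i s_0}$ (itself a minimizer close to $u_0$ at scales $\gg T_i s_0$): the triangle inequality gives $||(u_i)_r - \underline{u}_{r T_i s_0}||_{L^2(A_{1,1/2})} \to 0$ uniformly in $r \in (0, 1]$, so the choice $t := T_i s_0 \leq \eps_0$ contradicts the failure assumption for large $i$. The main obstacle, in my view, is verifying the $\rho = 0$ hypothesis of Theorem \ref{thm:ac-param}: this uses the one-sided assumption $u_i \leq u_0$ in an essential way, since otherwise the tangent at the origin could be a strictly smaller $1$-homogeneous minimizer and graphicality/Dini would only hold on an annulus bounded away from $0$. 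A secondary subtlety is ensuring that the blow-up scale $T_i$ is well-defined and that $s_0$ stays bounded and positive along the sequence, which is why the normalization $\eta_\ast$ must be chosen small relative to the foliation parameters but independent of $i$.
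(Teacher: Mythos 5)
Your overall scheme---contradiction, rescaling at a critical scale, compactness to an entire minimizer classified by the foliation Theorem~\ref{thm:ac-foliation}, and Dini estimates to transfer closeness across scales---matches the paper's plan, but your first step has a genuine gap. You claim Theorem~\ref{thm:ac-param} applies with $\rho = 0$ on the grounds that $u_i \le u_0$ forces the blow-up of $u_i$ at $0$ to equal $u_0$. This is false whenever $0 \notin \del\{u_i > 0\}$, which is precisely the regime the conclusion of the theorem describes: if $u_i$ is close to a leaf $\underline{u}_t$ with $t>0$, then $\del\{u_i>0\}$ is a smooth hypersurface that stays a definite distance from $0$, $u_i \equiv 0$ near $0$, the Weiss energy $W((u_i)_r)$ drops strictly below $W(u_0)$ as $r \to 0$, and the lower-density hypothesis \eqref{eqn:ac-param-hyp2} fails for every small $\rho$. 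So graphicality over $C\Omega$ is not available all the way to $0$ and your Dini bound $\int_0^{1/2}\|\del_r(u_i)_r\|\,dr \to 0$ does not hold. The ``rigidity near $u_0$'' appeal does not save this, because when $0 \in \setint\{u_i = 0\}$ the blow-up of $u_i$ at $0$ is the zero function, which is nowhere near $u_0$ in any norm, and your hypothesis $\|u_i - u_0\|_{L^2(B_1)} \le \delta$ gives no control on the blow-up at the vertex.

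The paper handles exactly this obstruction by defining the critical scale $\rho_i$ \emph{intrinsically} as the least radius for which $|W((u_i)_r) - W(u_0)| \le \delta'$ holds on $(\rho_i, 9/10)$, so that Theorem~\ref{thm:ac-param} is applicable on $[\rho_i, 1]$ by construction; the convergence $u_i \to u_0$ forces $\rho_i \to 0$, the case $\rho_i = 0$ gives an immediate contradiction, and when $\rho_i > 0$ the rescaled limit $u'$ of $(u_i)_{\rho_i}$ is ruled out from being $u_0$ precisely by the choice of $\rho_i$ together with the monotonicity of $W$ along the rescaled sequence. Your alternative normalization $\|(u_i)_{T_i} - u_0\|_{L^2(\del B_1)} = \eta_*$ could in principle replace this, but you would have to \emph{prove} that the energy stays near $W(u_0)$ on $(T_i, 1)$ before you can invoke Theorem~\ref{thm:ac-param} or the Dini estimate on that range---this is the crux and cannot be assumed. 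You should also handle the degenerate alternative that the density drops at the origin (i.e., $0 \notin \spt u_i$) rather than dismissing it via $\eps$-regularity, since $\eps$-regularity only yields smoothness of the free boundary and not any rigidity $u_i = u_0$.
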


\begin{proof}
We show there is a $0 \leq t \leq \eps$ so that \eqref{eqn:ac-lious-concl} holds.  If $t = 0$ then $u = u_0$ follows by the strong maximum principle (\cite{EdSpVe}, or \cite{DeJeSh}).  Suppose, towards a contradiction, this failed: there is a sequence $\delta_i \to 0$, $u_i \in H^1(B_1)$, so that \eqref{eqn:ac-lious-hyp} holds with $u_i$, $\delta_i$ in place of $u, \delta$, but \eqref{eqn:ac-lious-concl} fails every $u_i$ and for every $0 \leq t \leq \eps$.  By standard compactness for minimizers of $J$, we can assume that $u_i \to u_0$ in $H^1_{loc}(B_1) \cap C^0_{loc}(B_1)$.

For $\delta' > 0$ to be determined later, let $\rho_i$ be the least radius so that
\[
|W((u_i)_r) - W(u_0)| \leq \delta' \quad \forall \rho_i < r < 9/10.
\]
By our convergence $u_i \to u_0$, we have $\rho_i \to 0$.  By Theorem \ref{thm:ac-param}, provided we ensure $\delta'(u_0, \eps')$ sufficiently small, we have
\begin{equation}\label{eqn:ac-lious-1}
||(u_i)_r - u_0||_{L^2(A_{1, 1/2})} \leq \eps' \leq \eps \quad \forall 2\rho_i \leq r \leq 1.
\end{equation}
If $\rho_i = 0$, then we obtain a contradiction, so we must have $\rho_i > 0$ for all $i$.

Define $u'_i = u_{\rho_i}$.  Then for every $R > 1$ and $i >> 1$, we have
\begin{equation}\label{eqn:ac-lious-2}
1 = \inf \{ \rho : |W( (u_i')_r) - W(u_0)| \leq \delta' \quad \forall \rho < r < R \}.
\end{equation}
Passing to a subsequence, we can assume that $u_i' \to u'$ in $H^1_{loc}(\R^n) \cap C^0_{loc}(\R^n)$, for some minimizer $u'$ of $J$.  Since every $u_i \leq u_0$, we have $u' \leq u_0$, and therefore by Theorem \ref{thm:ac-foliation} either $u' = u_0$ or $u' = \underline{u}_t$ for some $t > 0$.

If $u' = u_0$, then by our convergence $u'_i \to u'$ and the monotonicity of $r \mapsto W((u'_i)_r)$, for $i >> 1$ we would have
\[
|W( (u'_i)_r) - W(u_0)| \leq \delta'/2 \quad \forall r \in (1/2, 2),
\]
which contradicts \eqref{eqn:ac-lious-2}.  Therefore $u' = \underline{u}_t$ for some $t > 0$.  Since $W((u'_i)_2) \neq 0$, we have $t \leq t_0(u_0)$, and therefore, since $\underline{u}$ is asymptotic to $u_0$ at infinity, there is an $R_0(u_0, \eps')$ so that
\begin{equation}\label{eqn:ac-lious-4}
||\underline{u}_{rt} - u_0||_{L^2(A_{1, 1/2})} \leq \eps' \quad \forall r \geq R_0.
\end{equation}
Combining \eqref{eqn:ac-lious-1}, \eqref{eqn:ac-lious-4}, with our convergence $u'_i \to \underline{u}_t$, we deduce that \eqref{eqn:ac-lious-concl} holds for all $i >> 1$.  This is a contradiction, and finishes the proof of the Theorem.
\end{proof}

\subsection{Proof of symmetric log-epiperimetric}\label{ssec:ac}

We prove in this Section the symmetric log-epiperimetric inequality of Theorem \ref{thm:ac-epi} for Alt-Caffarelli functional \eqref{eqn:ac}.  The proof is in principle a minor modification of the proof of the standard log-epiperimetric of \cite{EnSpVe}.  However, we choose to give the full proof of Theorem \ref{thm:ac-epi} here, partly because there are subtleties involving the choice of constants which are non-obvious even in the original proof, and partly because we can give a more streamlined proof synthesizing both the standard and the ``reverse'' epiperimetric inequalities.

Theorem \ref{thm:ac-epi} is a largely direct consequence of the following two Lemmas, which deal with the ``outer'' variation and ``inner'' variation separately.  The first is verbatim to \cite{SpVe}, and the latter is our modified variant of \cite{EnSpVe}.

\begin{lemma}[{\cite[Lemmas 2.5, 2.6]{SpVe}}]\label{lem:ac-zplus-epi}
Let $\Omega'$ be a fixed, Lipschitz domain in $\del B_1$, and let $\phi_i$ be eigenfunctions of $\Omega'$, with eigenvalues $\lambda_i$ satisfying $\lambda_i - (n-1) \geq \eta > 0$ for every $i \geq 2$ and some $\eta > 0$.  There are numbers $\rho(n, \eta), \eps(n, \eta) \in (0, 1)$ so that the following holds.

Let $z_+ \in H^1(\del B_1)$ take the form $z_+ = \sum_{i =2}^\infty c_i \phi_i$.  Define $h_+ = \sum_{i=2}^\infty c_i r^{\alpha_i} \phi_i$ to be the harmonic extension of $z_+$ to the cone over $\Omega'$, and let $\psi(r)$ be the harmonic function in $A_{1, \rho}$ such that $\psi(r = 1) = 1$, $\psi(r = \rho) = 0$.  Then $\psi h_+ \in H^1(B_1)$ satisfies $\psi h_+|_{\del B_1} = z_+$ and 
\[
W_0(\psi h_+) - W_0(r z_+) \leq -\eps W_0(rz_+) = -\eps|W_0(rz_+)|.
\]
\end{lemma}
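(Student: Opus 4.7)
The plan is to work mode-by-mode in the Dirichlet eigenbasis $\{\phi_i\}$ of $\Omega'$. Writing $z_+ = \sum_{i \geq 2} c_i \phi_i$ and using orthogonality together with the identity $\alpha_i(\alpha_i + n - 2) = \lambda_i$, a direct computation in spherical coordinates yields
\[
W_0(rz_+) = \frac{1}{n} \sum_{i \geq 2} c_i^2 \, \mu_i, \qquad W_0(h_+) = \sum_{i \geq 2} c_i^2 (\alpha_i - 1),
\]
where $\mu_i := \lambda_i - (n-1) = (\alpha_i - 1)(\alpha_i + n - 1)$. The hypothesis $\mu_i \geq \eta > 0$ makes $W_0(rz_+) \geq 0$, so $|W_0(rz_+)| = W_0(rz_+)$ and the conclusion's sign convention is consistent.

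First I would establish the inequality for the uncut competitor $h_+$. Using the factorization of $\mu_i$, the mode-by-mode difference telescopes to
\[
W_0(h_+) - W_0(rz_+) = -\frac{1}{n}\sum_i c_i^2 (\alpha_i - 1)^2,
\]
and the per-mode ratio $(\alpha_i - 1)^2 / \mu_i = (\alpha_i - 1)/(\alpha_i + n - 1)$ is monotone increasing in $\alpha_i$. The spectral gap $\lambda_i \geq n - 1 + \eta$ gives, via the quadratic formula, a quantitative lower bound $\alpha_i \geq \alpha_{\min}(n,\eta) > 1$, hence a uniform $\tau(n,\eta) > 0$ with $(\alpha_i - 1)/(\alpha_i + n - 1) \geq \tau$. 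This gives $W_0(h_+) \leq (1-\tau) W_0(rz_+)$.

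Next I would incorporate the cutoff $\psi$. Because $\psi h_+$ vanishes on $B_\rho$, the Dirichlet integral is effectively over the annulus $A_{1,\rho}$. Expanding $\psi h_+$ mode-wise and integrating by parts, using that $r^{\alpha_i}\phi_i$ is a homogeneous harmonic so that a mass term and a cross term combine into the exact derivative $\alpha_i \frac{d}{dr}(\psi^2 r^{2\alpha_i + n - 2})$, I expect the clean identity
\[
W_0(\psi h_+) = W_0(h_+) + \sum_i c_i^2 \int_\rho^1 (\psi'(r))^2 r^{2\alpha_i + n - 1}\, dr.
\]
Using the explicit formula $\psi(r) = (r^{2-n}-\rho^{2-n})/(1-\rho^{2-n})$ for $n \geq 3$ and its logarithmic analogue for $n = 2$, I would show that the remainder integral is bounded by $c_n(\rho)\, \mu_i$ uniformly in $\alpha_i \geq \alpha_{\min}$, with $c_n(\rho) \to 0$ as $\rho \to 0$. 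Summing over modes gives $W_0(\psi h_+) - W_0(h_+) \leq n\, c_n(\rho)\, W_0(rz_+)$. Choosing $\rho = \rho(n,\eta)$ small enough that $n\, c_n(\rho) \leq \tau/2$ and setting $\eps = \tau/2$ closes the argument.

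The main obstacle will be the uniformity of the cutoff-error bound across all $i \geq 2$: for very large $\alpha_i$ one needs the weight $r^{2\alpha_i + n - 1}$ to suppress the $(\psi')^2$ singularity near $r = \rho$ faster than the $\alpha_i^2$-growth of $\mu_i$; for $\alpha_i$ near $\alpha_{\min}$, where $\mu_i$ is only of order $\eta$, one relies on the overall prefactor $(\rho^{2-n}-1)^{-2}$ being small ($\sim \rho^{2n-4}$ for $n \geq 3$, $\sim 1/(\log\rho)^2$ for $n = 2$). Matching these two regimes is what forces the specific choice of $\rho(n,\eta)$.
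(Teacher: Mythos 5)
Your proof is correct and follows essentially the same spectral, mode-by-mode strategy as the cited source \cite[Lemmas 2.5, 2.6]{SpVe}, which the present paper invokes without reproving. The key identities all check out -- namely $W_0(rz_+) = \frac{1}{n}\sum_i c_i^2\mu_i$, $W_0(h_+) = \sum_i c_i^2(\alpha_i-1)$, the factorization $\mu_i = (\alpha_i-1)(\alpha_i+n-1)$ giving the per-mode contraction $(\alpha_i-1)/(\alpha_i+n-1) \geq \tau(n,\eta)$, and the exact cutoff-error identity $W_0(\psi h_+)-W_0(h_+) = \sum_i c_i^2\int_\rho^1(\psi')^2 r^{2\alpha_i+n-1}\,dr$ obtained by the telescoping integration by parts you describe -- and the uniform bound of that error by $o_\rho(1)\,\mu_i$ over $\alpha_i \geq \alpha_{\min}(n,\eta) > 1$ does go through by matching the large-$\alpha_i$ and moderate-$\alpha_i$ regimes exactly as you anticipate, so choosing $\rho(n,\eta)$ small and $\eps = \tau/2$ closes the argument.
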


\begin{lemma}\label{lem:ac-z1-epi}
There are constants $\delta(\sigma), \eps(\sigma) > 0$, $\gamma(\sigma) \in [0, 1)$, so that the following holds.

Take $\xi \in C^{2,\alpha}(\del \Omega)$, $z_1 \in H^1(\del B_1)$ such that
\begin{equation}\label{eqn:ac-z1-hyp}
||z_1 - \sigma||_{L^2(\del B_1)} \leq \delta, \quad ||\xi||_{2,\alpha} \leq \delta,
\end{equation}
and assume additionally $z_1$ takes the form $z_1 = c_1 \phi^{\Omega_\xi}_1$.  Then we can find an $h_1 \in H^1(B_1)$ satisfying $h_1|_{\del B_1} = z_1$, so that
\begin{equation}\label{eqn:ac-z1-concl}
W(h_1) - W(rz_1) \leq - \eps |W(rz_1) - W(r\sigma)|^{1+\gamma}.
\end{equation}
\end{lemma}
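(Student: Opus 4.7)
The plan is to reduce to a finite-dimensional analysis on $(c_1,\xi)$ and construct $h_1$ by gluing an inner profile in $B_\rho$ to the trace $z_1$ on $\del B_1$ via radial interpolation in an annulus $A_{1,\rho}$. The construction closely follows \cite{EnSpVe}, with the key new ingredient being that, in the ``reverse'' regime $\cE(rz_1) < 0$, the inner profile is taken from the Hardt--Simon foliation of Theorem~\ref{thm:ac-foliation}: the strict minimality of the leaves $\underline{u}_t, \overline{u}_t$ relative to $u_0$ is what allows us to beat $rz_1$ in energy regardless of the sign of $\cE(rz_1)$.

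First I would normalize so that $\sigma = c_\sigma\phi_1^\Omega$ with $\lambda_1^\Omega = n - 1$ (forced by the free-boundary condition $|\nabla\sigma| = 1$ on $\del\Omega$ which encodes that $u_0$ is a critical point of $W$), and record the explicit identity
\[
\cE(rz_1) \;=\; \tfrac{1}{n}\bigl[c_1^2\bigl(\lambda_1^{\Omega_\xi} - (n-1)\bigr) + \bigl(|\Omega_\xi| - |\Omega|\bigr)\bigr].
\]
Analyticity of $\xi \mapsto \lambda_1^{\Omega_\xi}$ and $\xi\mapsto|\Omega_\xi|$ on $C^{2,\alpha}$-small perturbations yields a power-series expansion of $\cE(rz_1)$ in $(c_1 - c_\sigma, \xi)$ whose leading (quadratic) part is the Jacobi form of $u_0$ restricted to normal variations of $\del\Omega$ together with the first-eigenfunction variations of $\sigma$. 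The kernel of this form is one-dimensional, generated by the infinitesimal Hardt--Simon deformation; restricted to this kernel, $\cE(rz_1)$ becomes an analytic one-parameter function to which the classical Lojasiewicz inequality supplies the exponent $\gamma \in [0,1)$ (with $\gamma = 0$ in the ``integrable'' situation of \cite{EnSpVe}).

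Second, fix $\rho = \rho(\sigma) > 0$ small, and set $h_1(r\theta) = \psi(r) z_1(\theta) + (1 - \psi(r)) g(r\theta)$ on $A_{1,\rho}$ with a radial cutoff $\psi$ satisfying $\psi(1) = 1$, $\psi(\rho) = 0$, and $h_1 = g$ on $B_\rho$. The inner profile $g$ is chosen in two cases: (a) in the standard regime $\cE(rz_1) \ge 0$, take $g$ to be the homogeneous extension of a perturbation of $\sigma$ in the Jacobi-kernel direction, exactly as in \cite{EnSpVe}; (b) in the reverse regime $\cE(rz_1) < 0$, take $g = \underline{u}_t$ (or $\overline{u}_t$) with scaling parameter $t = t(c_1,\xi)$ chosen so that the trace of $g$ on $\del B_\rho$ matches the Jacobi-kernel component of $z_1$ at scale $\rho$. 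In either case, $g$ restricted to $B_\rho$ is a minimizer of $J$, so its contribution to $W(h_1)$ is explicit.

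Finally, I would estimate $W(h_1) - W(rz_1)$ directly. The annulus interpolation contributes a cost quadratic in $\|z_1 - g|_{\del B_1}\|_{L^2(\del B_1)}$, controlled by the Lojasiewicz expansion of Step~1; inside $B_\rho$, the contribution is either a perturbed cone whose Jacobi-kernel gain matches $|\cE(rz_1)|^{1+\gamma}$ in the standard way, or a strictly minimal foliation leaf whose slack $\mu(t) > 0$ against $r\sigma$ furnishes the same power. Combined, both give $W(h_1) - W(rz_1) \le -\eps|\cE(rz_1)|^{1+\gamma}$. The main obstacle is case (b): the scale $t$ must be coupled to $(c_1,\xi)$ so that the trace of $\underline{u}_t$ genuinely captures the Jacobi-kernel component of $z_1$, and the slack $\mu(t)$ must be quantitatively of order $t^{2/(1-\gamma)}$ or better, so as to absorb both the interpolation error and the target $|\cE(rz_1)|^{1+\gamma}$. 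This reduces to a Lojasiewicz estimate for the analytic function $t \mapsto W((\underline{u}_t)_1)$ near $t = 0$, which can be extracted from the analyticity of the free boundaries of the leaves $\underline{u}_t$ together with the expansion established in Step~1.
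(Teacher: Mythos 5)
Your proposal takes a genuinely different route from the paper in the ``reverse'' regime $\cE(rz_1)<0$, and that is where it breaks down. The paper's proof makes no use of the Hardt--Simon foliation at all: it keeps a single, sign-agnostic construction by writing $h_1(r\theta) = r\,\kappa(r)\,\phi_1^{\Omega_{g(r)}}(\theta)$, so that the identity $W(rv)=\int_0^1 W_S(v(r))r^{n-1}dr + \int_0^1\int_{\del B_1}(\del_r v)^2 r^{n+1}$ converts $W(h_1)$ exactly into a radial integral of the analytic shape functional $\cG(g(r),s(\eta(r)))$. The decay is then produced by flowing the Lyapunov--Schmidt reduction $G(\mu,s)$ downhill along $-DG/|DG|$ (which decreases $G$ whatever its sign), while the off-kernel modes $\xi^\perp_\pm$ are contracted/expanded by two radial factors $\eta_\pm(r)$ chosen so that \emph{both} positive and negative eigenmodes of $\delta^2\cG$ contribute a definite negative term. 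No case split on the sign of $\cE$ is ever made.

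Against this, several steps of your proposal would fail. First, the claim that the kernel of the Hessian is one-dimensional and generated by the Hardt--Simon deformation is incorrect in general: the paper works with $K\oplus\R$ where $K=\vecspan\{\zeta_i:\bar\lambda_i=0\}$ is an arbitrary finite-dimensional kernel (containing, e.g., rotational Jacobi fields), and the extra $\R$ tracks the $L^2$-normalization/amplitude $s$, not a spatial scaling. So choosing a single parameter $t$ for a foliation leaf cannot reproduce the kernel component of $z_1$ when $\dim K>1$, and the matching in case~(b) is ill-posed. Second, the additive interpolant $h_1=\psi z_1 +(1-\psi)g$ destroys the graphical/domain-perturbation structure and with it any tractable control of the volume term $|\{h_1>0\}|$ in $W$; the multiplicative structure $r\kappa(r)\phi_1^{\Omega_{g(r)}}$ is precisely what makes $W(h_1)$ an integral of $\cG$, and without it the annulus contribution cannot be reduced to the quadratic cost you invoke. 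Third, even granting a match of traces on $\del B_\rho$ with a single leaf, the slack $W(u_0)-W((\underline{u}_t)_1)$ is an \emph{a priori} quantity attached to the leaf, with no obvious quantitative relation to $|W(rz_1)-W(r\sigma)|$, which depends on all of $(c_1,\xi)$ and not just on the kernel projection; you flag this as ``the main obstacle'' but it is not resolved. Finally, applying {\L}ojasiewicz to $t\mapsto W((\underline{u}_t)_1)$ constrains only the scaling direction, whereas the paper needs the inequality on the full reduced $G$ on $K\oplus\R$. The paper avoids all of this by not invoking the foliation at all in the proof of the epiperimetric inequality; the foliation appears only later, in the Liouville-type Theorem~\ref{thm:ac-lious}.
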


\begin{proof}[Proof of Theorem \ref{thm:ac-epi} given Lemmas \ref{lem:ac-zplus-epi}, \ref{lem:ac-z1-epi}]
Since $\lambda_1^\Omega = n-1$ and $\Omega$ is connected \cite[Theorem 2.3]{EdSpVe}, we have $\lambda_2^\Omega > n-1$.  Therefore, provided $\delta(\sigma)$ is sufficiently small, we have $\lambda_2^{\Omega_\xi} - (n-1) \geq \eps_0(\sigma) > 0$.

Write
\[
z = c_1 \phi_1^{\Omega_\xi} + \sum_{i \geq 2} c_i \phi_i^{\Omega_\xi} =: z_1 + z_+.
\]
By our hypotheses and our previous discussion, we can apply Lemma \ref{lem:ac-zplus-epi} to $z_+$ to obtain an $\psi h_+$, $\rho$, and apply Lemma \ref{lem:ac-z1-epi} to $z_1$ to obtain an $h_1$.

Define $h$ to be the competitor
\[
h(x) = \left\{ \begin{array}{l l} r z_1 + \psi h_+ & \rho \leq |x| \leq 1 \\ \rho h_1(x/\rho) & 0 < |x| \leq \rho \end{array} \right.
\]
By construction, $h \in H^1(B_1)$, and $h|_{\del B_1} = z$ and $h|_{\del B_\rho} = \rho z_1$.  By orthogonality of the $\phi_i$, the facts that $\{ z > 0\} = \{ z_1 > 0\}$ and $\{ h_+ > 0 \} \subset \{ rz_1 > 0 \}$, and the scaling of $W$, it's straightforward to verify that
\begin{align*}
&W(rz) = W(rz_1) + W_0(r z_+), \text{ and}  \\
&W(h) - W(rz) \leq \rho^n(W(h_1) - W(rz_1)) + W_0(\psi h_+) - W_0(rz_+).
\end{align*}

Since by \eqref{eqn:ac-epi-hyp} $W_0(rz_+) \leq c(\sigma)$, we can therefore use Lemmas \ref{lem:ac-zplus-epi}, \ref{lem:ac-z1-epi} to estimate
\begin{align*}
W(h) - W(rz)
&\leq -\rho^n \eps_1 |W(r z_1) - W(r\sigma)|^{1+\gamma} - \eps_2 |W_0(rz_+)| \\
&\leq -\rho^n \eps_1 |W(rz_1) - W(r\sigma)|^{1+\gamma} - (\eps_2/c) |W_0(rz_+)|^{1+\gamma} \\
&\leq -2^{-1-\gamma}\min\{ \rho^n \eps_1, \eps_2/c(\sigma)\} |W(rz) - W(r\sigma)|^{1+\gamma}.
\end{align*}
This proves Theorem \ref{thm:ac-epi}.
\end{proof}

\begin{proof}[Proof of Lemma \ref{lem:ac-z1-epi}]
The proof is similar to \cite{EnSpVe}, except to allow for $\cG$ to be negative we must modify \emph{both} positive and negative modes (i.e. $\xi^\perp_\pm$), and be slightly more careful in our treatment of the zero modes (i.e. $\xi^T$).  We shall use heavily the notation from \cite{EnSpVe}.  In this proof $\eps(\sigma)$, $b(\sigma, \eps)$, $\delta(\sigma, \eps)$ are small positive constants $\leq 1$ which we shall choose as we go along, but can a posteriori be fixed.  Letters $c(\sigma)$, $c'(\sigma)$ represent large constants $\geq 1$ which may increase from line to line.

Recall by \cite[Lemma 3.6]{EnSpVe}, if $v(r, \theta) \in H^1([0, 1] \times \del B_1)$, then $r v(r, \theta) \in H^1(B_1)$ satisfies
\begin{align}\label{eqn:lem-ac-1}
W(rv) = \int_0^1 W_S(v(r)) r^{n-1} dr + \int_0^1 \int_{\del B_1} (\del_r v)^2 r^{n+1} d\theta dr,
\end{align}
where $W_S : H^1(\del B_1) \to \R$ is defined by
\[
W_S(w) = \int_{\del B_1} |\nabla w|^2 - (n-1) w^2 d\theta + \haus^{n-1} (\{w > 0\}).
\]

Our competitor $h_1$, like in \cite{EnSpVe}, will take the form $h_1(r, \theta) = r \kappa(r) \phi_1^{\Omega_{g(r)}}$ for some suitable flows $\kappa(r)$, $g(r, \theta)$.  To this end, we recall the functional introduced by \cite{EnSpVe} $\cG : C^{2,\alpha}(\del \Omega) \times \R \to \R$ given by
\[
\cG(\zeta, s) = (\kappa^2 + s^3) ( \lambda_1^{\Omega_\zeta} - (n-1)) + \haus^{n-1}(\Omega_\zeta) - \haus^{n-1}(\Omega).
\]

As shown in \cite[Lemma 2.3]{EnSpVe}, provided $||\zeta||_{2,\alpha} \leq \delta(\sigma)$ is sufficiently small, $\cG$ is well-defined, analytic, satisfies
\[
\cG(0, 0) = 0, \quad \delta \cG(0, 0) = 0, \quad \delta^2 \cG(0, 0)[(\zeta, r), (\eta, s)] = \delta^2 \cG(0, 0)[(\zeta, 0), (\eta, 0)].
\]
The operator $\delta^2 \cG(0, 0)[(\zeta, 0), (\eta, 0)]$ is a self-adjoint bilinear form on $H^{1/2}(\del \Omega)$, with eigenvalues $\bar\lambda_i \to \infty$, and an $L^2(\del \Omega)$-ON basis of smooth eigenfunctions $\zeta_i$.  Writing $K = \mathrm{span} \{ \zeta_i : \bar\lambda_i = 0\}$, then the kernel of $\delta^2 \cG(0, 0)$ is $K \oplus \R$.

Write $P_K$ for the $L^2(\del\Omega)$-orthogonal projection to $K$, and $P_\pm$ for the projection onto $\mathrm{span} \{ \zeta_i : \pm \bar\lambda_i > 0 \}$.  Note that since $\delta^2 \cG(0, 0)$ has finite index and kernel, we have the bounds
\[
||P_K f||_{2,\alpha} + ||P_+ f||_{2,\alpha} + ||P_- f||_{2, \alpha} \leq c(\sigma) ||f||_{2, \alpha}
\]
for any $f \in C^{2,\alpha}(\del \Omega)$.

Let $Y : (K \oplus \R) \cap U \to K^\perp$ be the Lyapunov-Schmidt map, defined in a suitable $(C^{2,\alpha}(\del\Omega) \oplus \R)$-neighborhood $U$ of $(0, 0)$.  $Y$ is analytic, and satisfies $Y(0, 0) = 0$, $\delta Y(0, 0) = 0$.

\cite[(2.8), Section A.5]{EnSpVe} have shown that
\begin{equation}\label{eqn:lem-ac-2}
|\delta^2 \cG(g, s)[(\zeta, 0), (\zeta, 0)] - \delta^2 \cG(0, 0)[(\zeta, 0), (\zeta, 0)]| \leq (\omega(||g||_{2,\alpha}) + c(\sigma) s^3) ||\xi||_{H^{1/2}}^2,
\end{equation}
for some continuous, increasing function $\omega : [0, \infty) \to [0, \infty)$ with $\omega(0) = 0$.  There is no loss in assuming that $\omega(\tau) \geq \tau$.  From \cite[Section A]{EnSpVe} and \cite[Lemma 2.8]{Da}, we have
\begin{equation}\label{eqn:lem-ac-3}
|\delta^2 \cG(0, 0)[(\zeta, 0), (\zeta, 0)| \leq c(\sigma) ||\zeta||_{H^{1/2}}^2,
\end{equation}
while from \cite[(A13), (A14)]{EnSpVe} we have the opposite inequality
\begin{equation}\label{eqn:lem-ac-4}
||\zeta||_{H^{1/2}}^2 \leq \frac{n}{\lambda_2^\Omega - (n-1)}\left( |\delta^2 \cG(0, 0)[(\zeta, 0), (\zeta, 0)]| + \int_{\del\Omega} \zeta^2 H_{\del\Omega} d\theta\right).
\end{equation}

\vspace{3mm}

As is well-known (see \cite[Theorem 2.3]{EdSpVe}), since $r\sigma$ is a $1$-homogenous minimizer of \eqref{eqn:ac}, $\lambda_1^\Omega = n-1$, $\Omega$ is connected, and $\sigma = \kappa \phi_1^{\Omega}$ for $\kappa$ chosen so that $\del_\nu \sigma = -1$.  From \cite[(2.7)]{EnSpVe}, we have
\[
||\phi_1^{\Omega_\zeta} - \phi_1^{\Omega}||_{L^2(\del B_1)} \leq c(\sigma) ||\zeta||_{L^2(\del\Omega)},
\]
and therefore, combined with \eqref{eqn:ac-z1-hyp}, we deduce $|c_1 - \kappa| \leq c(\sigma) \delta$.  Fix $s_0$ so that $c_1^2 = \kappa^2 + s_0^3$, and note that $|s_0| \leq c(\sigma)\delta^{1/3}$.

Expand
\[
\xi = \left[ P_K(\xi) + Y(\xi, s_0) \right] + \left[ P^\perp_K(\xi) - Y(\xi, s_0) \right] =: \xi^T + \xi^\perp,
\]
and write
\[
\xi^\perp = P_+ \xi^\perp + P_- \xi^\perp =: \xi^\perp_+ + \xi^\perp_-.
\]

Write $P_K \xi = \sum_{j=1}^m \mu_0^j \zeta_j$, for $\zeta_j$ spanning $K$, and write $\mu_0 = (\mu_0^1, \ldots, \mu_0^m) \in \R^m$.  Define $G : \R^m \times \R \to \R$ by $G(\mu, s) = \cG(\sum_{j=1}^m \mu^j \zeta_j + Y(\sum_{j=1}^m \mu^j \zeta_j, s))$.  As shown in \cite{EnSpVe}, there is a $\delta'(\sigma) > 0$ so that provided $|(\mu, s)| \leq \delta'$, $G$ is well-defined, analytic, and (therefore) satisfies the \L ojasiewicz-Simon inequality
\[
|G(\mu, s)|^{1-\beta} \leq c(\sigma) |D G(\mu, s)|,
\]
for some $\beta \in (0, 1/2]$.  $\sigma$ is integrable if and only if $\beta = 1/2$ if and only if $G \equiv 0$.  Note also that $D G(\mu, s) = 0$ implies $G(\mu, s) = 0$.

Since $G(0, 0) = 0$, we have
\[
|G(\mu, s)| \leq c(\sigma)|(\mu, s)| .
\]
By construction we have $|\mu_0| \leq c(\sigma) ||\xi||_{L^2(\del B_1)} \leq c(\sigma) \delta$, and $|s_0| \leq c(\sigma) \delta^{1/3}$.  Therefore, by ensuring $\delta(\sigma)$ is sufficiently small, we can assume
\begin{equation}\label{eqn:lem-ac-5}
\max \{ |(\mu_0, s_0)|, |G(\mu_0, s_0)|^{1-\beta} \} \leq c(\sigma) \delta^{1/6} \leq \min\{ \delta'/4, \kappa^2/4 \}.
\end{equation}

If $G(\mu_0, s_0) = 0$ define $(\mu(t), s(t)) \equiv (\mu_0, s_0)$.  Otherwise, let $(\mu(t), s(t))$ solve the ODE
\[
(\mu'(t), s'(t)) = -\frac{D G(\mu, s)}{|D G(\mu, s)|}, \quad (\mu(0), s(0)) = (\mu_0, s_0).
\]
The solutions $(\mu(t), s(t))$ exists smoothly on some positive, maximal time interval $[0, t_*)$, where either $t_* = \infty$, or $\lim_{t \to t_*} |(\mu(t), s(t)| = \delta'$, or $\lim_{t \to t_*} G(\mu(t), s(t)) = 0$.  Since
\[
|(\mu(t), s(t)) - (\mu_0, s_0)| \leq t, \quad |(\mu_0, s_0)| \leq \delta'/4,
\]
we can assume that either $t_* \geq \delta'/2$ or $\lim_{t \to t_*} G(\mu(t), s(t)) = 0$.  Note also that
\[
\frac{d}{dt} G(\mu(t), s(t)) = - |D G(\mu(t), s(t))| < 0,
\]
and so $G(\mu(t), s(t))$ is decreasing.

If $G(\mu_0, s_0) = 0$, set $b = 0$ and $\eta(r) \equiv 0 \equiv b |G(\mu_0, s_0)|^{1-\beta}(1-r)$.

If $G(\mu_0, s_0) < 0$, then $t_* \geq \delta'2$.  Let $\eta(r) = b |G(\mu_0, s_0)|^{1-\beta}(1-r)$.  By our choice of $\delta$ above we have $|\eta(r)| \leq \delta'/2$ for $r \in [0, 1]$, and so $\mu(\eta(r)), s(\mu(r))$ are well-defined on $[0, 1]$.  We have the bounds
\begin{align*}
G(\mu(\eta(r)), s(\eta(r))) - G(\mu_0, s_0)
&\leq -\int_0^{\eta(r)} |D G(\mu(t), s(t))| dt \\
&\leq -(1/c) \int_0^{\eta(r)} |G(\mu(t), s(t))|^{1-\beta} dt \\
&\leq -(1/c) |G(\mu_0, s_0)|^{1-\beta} \eta(r) \\
&\leq -(b/c) |G(\mu_0, s_0)|^{2-2\beta} (1-r) \quad \forall r \in [0, 1].
\end{align*}

If $G(\mu_0, s_0) > 0$, we break into two cases.  Define
\[
t_1 = \sup \{ t \in [0, \delta'/2] : G(\mu(t), s(t)) \geq G(\mu_0, s_0)/2 \}.
\]
If $t_1 \geq b G(\mu_0, s_0)^{1-\beta}$, then define $\eta(r) = b G(\mu_0, s_0)^{1-\beta}(1-r)$ as before, and estimate
\begin{align*}
G(\mu(\eta(r)), s(\eta(r))) - G(\mu_0, s_0) 
&\leq -(1/c) \int_0^{\eta(r)} |G(\mu(t), s(t))|^{1-\beta} dt\\
&\leq -(1/2c) |G(\mu_0, s_0)|^{1-\beta} \eta(r) \\
&\leq -(b/c) |G(\mu_0, s_0)|^{2-2\beta} (1-r) \quad \forall r \in [0, 1].
\end{align*}
If $t_1 < b G(\mu_0, s_0)^{1-\beta}$, then take $\eta(r) : [0, 1] \to \R$ to be a smooth, decreasing function satisfying
\[
\eta|_{[0, 1/2]} = t_1, \quad \eta(1) = 0, \quad |\eta'| \leq 3t_1.
\]
In this case we estimate (recalling that $2-2\beta \geq 1$ and $b \leq 1$)
\begin{align*}
G(\mu(\eta(r)), s(\eta(r))) - G(\mu_0, s_0) 
&= -G(\mu_0, s_0)/2 \\
&\leq -(b/2) |G(\mu_0, s_0)|^{2-2\beta}(1-r) \quad \forall r \in [0, 1/2].
\end{align*}

Observe that, however we defined $\eta$, we have the bounds $|\eta'(r)| \leq 3b |G(\mu_0, s_0)|^{1-\beta}$ and
\begin{align}
&G(\mu(\eta(r)), s(\eta(r))) - G(\mu_0, s_0) \leq 0 \quad \forall r \in [0, 1], \\
&G(\mu(\eta(r)), s(\eta(r))) - G(\mu_0, s_0) \leq -(b/c)|G(\mu_0, s_0)|^{2-2\beta}(1-r) \quad \forall r \in [0, 1/2]. \label{eqn:lem-ac-6}
\end{align}
Moreover, by \eqref{eqn:lem-ac-5} and our definition of $\eta(r)$ we have the bounds
\begin{equation}\label{eqn:lem-ac-7}
|(\mu(\eta(r)), s(\eta(r)))| \leq c(\sigma) \delta^{1/6} \quad \forall r \in [0, 1].
\end{equation}

Define $\eta_-(r) = 1 + a(1-r)\eps$.  Then
\begin{align*}
\int_0^1 (\eta_-(r)^2 - 1)r^{n-1} dr = \frac{2a \eps}{n(n+1)} \pm c(n) a^2 \eps^2 \geq 4\eps/n
\end{align*}
provided $a(n)$ is chosen sufficiently large, and $\eps(n)$ sufficiently small.  Fix $a$ to be thus.  Similarly, define $\eta_+(r) = 1 - a'(1-r)\eps$, and then
\begin{align*}
\int_0^1 (\eta_+(r)^2 - 1)r^{n-1} dr = \frac{-2a'\eps}{n(n+1)} \pm c(n) a'^2 \eps^2 \leq -4\eps/n .
\end{align*}
provided we choose and fix $a'(n)$ large, and ensure $\eps(n)$ is small.

\vspace{3mm}

We define our competitor as follows.  First we define
\begin{align*}
g(r, \theta) &= \left[ \sum_j \mu^j(\eta(r)) \zeta_j + Y(\sum_j \mu^j(\eta(r)) \zeta_j, s(\eta(r)))\right] + \left[\eta_-(r) \xi^\perp_- + \eta_+(r) \xi^\perp_+\right] \\
&=: g^T(r, \theta) + g^\perp(r, \theta),
\end{align*}
and then set
\[
h_1(x = r\theta) = r \kappa(r) \phi_1^{\Omega_{g(r)}}(\theta), \quad \text{ for } \quad \kappa(r)^2 = \kappa^2 + s(\eta(r))^3.
\]

From \eqref{eqn:lem-ac-1} and the form of $h_1$, we have
\begin{align}
W(h_1) - W(rz)
&= \int_0^1  (\cG(g(r), s(\eta(r))) - \cG(\xi, s_0))r^{n-1} dr \\
&\quad + \int_0^1 \int_{\del B_1} (\del_r (h_1(r)/r))^2 r^{n+1} d\theta dr . \label{eqn:lem-ac-8}
\end{align}

Provided $\delta(\sigma)$ is sufficiently small, we have
\begin{align*}
&(\del_r g)^2 \leq c(\sigma) |\eta'(r)|^2 + c(\sigma) \eps^2( |\xi^\perp_-(r, \theta)|^2 + |\xi^\perp_+(r, \theta)|^2), \\
&\kappa'(r)^2 \leq c(\sigma) |\eta'(r)|^2
\end{align*}
and therefore, using \cite[(2.7)]{EnSpVe}, we deduce
\begin{align}
\int_{\del B_1} (\del_r (h_1(r)/r))^2  d\theta
&\leq 2 \kappa'(r)^2 + 2 \kappa(r)^2 \int_{\del B_1} (\delta \phi_1^{g(r)}[\del_rg(r)])^2 d\theta \nonumber \\
&\leq c(\sigma) |\eta'(r)|^2  + c(\sigma) \kappa(r)^2 ||\del_r g(r)||^2_{L^2(\del B_1)} \nonumber \\
&\leq c(\sigma) b^2 |G(\mu_0, s_0)|^{2-2\beta} + c(\sigma) \eps^2 ||\xi^\perp||^2_{L^2(\del B_1)}. \label{eqn:lem-ac-9}
\end{align}

We work towards estimating the first term in \eqref{eqn:lem-ac-8}.  Write
\begin{align*}
&\cG(g(r), s(\eta(r))) - \cG(\xi, s_0)\\
&= \left[ (\cG(g(r), s(\eta(r))) - G(\mu(\eta(r)), s(\eta(r)))) - (\cG(\xi, s_0) - G(\mu_0, s_0) \right] \\
&\quad + \left[ G(\mu(\eta(r)), s(\eta(r))) - G(1) \right] \\
&=: E^\perp(r) + E^T(r).
\end{align*}
Expand $\xi^\perp = \sum_i \alpha_i \zeta_i \equiv \sum_{\bar\lambda_i \neq 0} \alpha_i \zeta_i$.

By considering the Taylor expansion of
\[
t \mapsto \cG( g^T(r) + t g^\perp(r), s(\eta(r))),
\]
we deduce there is a $\tau \in (0, 1)$ so that, provided $\delta(\sigma)$ is sufficiently small:
\begin{align*}
&\cG(g(r), s(\eta(r))) - G(\mu(\eta(r)), s(\eta(r))) \\
&= \delta \cG(g^T(r))[(g^\perp(r), 0)] + \frac{1}{2} \delta^2 \cG(g^T(r) + \tau g^\perp(r))[(g^\perp(r), 0), (g^\perp(r), 0)] \\
&= \frac{1}{2} \delta^2 \cG(g^T(r) + \tau g^\perp(r))[(g^\perp(r), 0), (g^\perp(r), 0)] \\
&= \frac{1}{2} \delta^2 \cG(0)[(g^\perp(r), 0), (g^\perp(r), 0)] \pm (\omega( 2 ||g(r)||_{2,\alpha}) + c(\sigma) s(\eta(r))^3)||g^\perp(r)||^2_{H^{1/2}} \\
&= \eta_-(r)^2/2 \sum_{\bar\lambda_i < 0} \bar\lambda_i \alpha_i^2 + \eta_+(r)^2/2 \sum_{\bar \lambda_i > 0} \bar\lambda_i \alpha_i^2 \pm ( \omega(c \delta^{1/6}) + c(\sigma) \delta^{1/2}) ||\xi^\perp||_{H^{1/2}}^2 \\
&= - \eta_-(r)^2/2 \sum_{\bar\lambda_i < 0} |\bar\lambda_i| \alpha_i^2 + \eta_+(r)^2/2\sum_{\bar\lambda_i > 0} |\bar\lambda_i| \alpha_i^2 \pm 2 \omega( \delta^{1/7}) ||\xi^\perp||_{H^{1/2}}^2.
\end{align*}
In the third line we used \cite[(B4)]{EnSpVe}, the fourth we used \cite[(2.8), Section A5]{EnSpVe}, and in the fifth line we used \eqref{eqn:lem-ac-7} to estimate
\[
||g(r)||_{2,\alpha} \leq c(\sigma) |\mu(\eta(r))| + c(\sigma)||\xi||_{2,\alpha} \leq c(\sigma) \delta^{1/6}.
\]

Similarly, again taking $\delta(\sigma)$ is small we can estimate
\begin{align}
\cG(\xi, s_0) - \cG(\mu_0, s_0) 
&= -(1/2) \sum_{\bar\lambda_i < 0} |\bar\lambda_i| \alpha_i^2 + (1/2) \sum_{\bar\lambda_i > 0} |\bar\lambda_i| \alpha_i^2 \pm 2 \omega(\delta^{1/7})||\xi^\perp||_{H^{1/2}}^2. \label{eqn:lem-ac-11}
\end{align}
Recalling our choice of $\eta_-(r)$, $\eta_+(r)$, we deduce
\begin{align}
\int_0^1 E^\perp(r) r^{n-1} dr
&\leq -(2\eps/n) \sum_{\bar\lambda_i \neq 0} |\bar\lambda_i| \alpha_i^2 + c(\sigma) \omega(\delta^{1/7}) ||\xi^\perp||_{H^{1/2}}^2. \label{eqn:lem-ac-10}
\end{align}


Combining \eqref{eqn:lem-ac-6}, \eqref{eqn:lem-ac-9}, \eqref{eqn:lem-ac-10}, and ensuring $b(\sigma)$ is sufficiently small, we deduce
\begin{align}
W(h_1) - W(rz) 
&\leq -(b/c) |G(\mu_0, s_0)|^{2-2\beta} + c b^2 |G(\mu_0, s_0)|^{2-2\beta} \nonumber \\
&\quad - (2\eps/n) \sum_{\bar\lambda_i \neq 0} |\bar \lambda_i| \alpha_i^2 + (c \omega(\delta^{1/7}) + c \eps^2) ||\xi^\perp||^2_{H^{1/2}} \nonumber \\
&\leq -(b/2c) |G(\mu_0, s_0)|^{2-2\beta}  - (\eps/n) \sum_i |\bar\lambda_i| \alpha_i^2  \nonumber \\
&\quad + (c \omega(\delta^{1/7}) + c \eps^2 - \eps/c)||\xi^\perp||_{H^{1/2}}^2,\label{eqn:lem-ac-12}
\end{align}
for $c = c(\sigma)$.  In the second inequality we additionally used \eqref{eqn:lem-ac-4} to estimate
\begin{align*}
\sum_i |\bar \lambda_i| \alpha_i^2
&\geq \frac{1 + \min_{\bar\lambda_i \neq 0} |\bar\lambda_i|}{2} ( |\delta^2 \cG(0, 0)[(\xi^\perp, 0), (\xi^\perp, 0)]| + ||\xi^\perp||^2_{L^2}) \\
&\geq (1/c(\sigma))||\xi^\perp||_{H^{1/2}}^2.
\end{align*}

Recall from \eqref{eqn:lem-ac-11} that we have (ensuring $\delta(\sigma)$ is small):
\begin{align}
n(W(rz) - W(r\sigma))
&= \cG(\xi, s_0) - \cG(\mu_0, s_0) + \cG(\mu_0, s_0) \nonumber \\
&= (1/2) \sum_i \bar\lambda_i \alpha_i^2 + G(\mu_0, s_0) \pm 2\omega(\delta^{1/7})||\xi^\perp||_{H^{1/2}}^2. \label{eqn:lem-ac-13}
\end{align}

If $G(\mu_0, s_0) = 0$, then we can combine \eqref{eqn:lem-ac-12}, \eqref{eqn:lem-ac-13}, to estimate
\begin{align}
W(h_1) - W(rz) 
&\leq -\eps|W(rz) - W(r\sigma)| + (c \omega(\delta^{1/7}) + c\eps^2 - \eps/c) ||\xi^\perp||^2_{H^{1/2}} \\
&\leq -\eps|W(rz) - W(r\sigma)|
\end{align}
provided $\eps(\sigma)$ is small, and $\delta(\sigma, \eps)$ is small.

Suppose $G(\mu_0, s_0) \neq 0$.  Using \eqref{eqn:lem-ac-3}, \eqref{eqn:lem-ac-13}, and the inequality $|a - b|^{1+\gamma} \geq 2^{-\gamma} |a|^{1+\gamma} - |b|^{1+\gamma}$ for any $a, b \in \R$, we get
\begin{align}
|G(\mu_0, s_0)/n|^{1+\gamma}
&\geq 2^{-\gamma} |W(rz) - W(r\sigma)|^{1+\gamma} - \left| (1/2n) \sum_i \bar\lambda_i \alpha_i^2 \pm 2\omega(\delta^{1/7}) ||\xi^\perp||_{H^{1/2}}^2\right|^{1+\gamma} \\
&\geq 2^{-\gamma} |W(rz) - W(r\sigma)|^{1+\gamma} - c(\sigma) ||\xi^\perp||_{H^{1/2}}^2
\end{align}
and therefore, writing $c' = c'(\sigma)$, 
\begin{align}
W(h_1) - W(rz)
&\leq (-b/c') |W(rz) - W(r\sigma)|^{1+\gamma} \\
&\quad + ( c' \omega(\delta^{1/7}) + c' b + c'\eps^2 - \eps/c') ||\xi^\perp||_{H^{1/2}}^2 \\
&\leq (-b/c') |W(rz) - W(r\sigma)|^{1+\gamma}
\end{align}
provided $\eps(\sigma)$ is small, and $\delta(\sigma, \eps)$ is small, and $b(\sigma, \eps)$ is small.
\end{proof}

\section{Obstacle and thin-obstacle problem}\label{sec:fb}

Similar results to the ones presented in the previous sections hold for almost minimizers of the obstacle problem and of the thin-obstacle problem. Since the modification for the thin-obstacle problem are essentially already contained in \cite{cospve}, we present here only the statements and proofs for almost minimizers of the obstacle problem. 

\subsection{Results for almost minimizers of the obstacle problem}
In this section we follow the notations and arguments of \cite{cospve}. Let $B_1$ be the unit ball in $\R^n$ and consider the functional 
$$
\cF_{\text{\tiny\sc ob}}(u,W):=\frac12\int_{W}|\nabla u|^2\,dx+\int_{W}u\,dx\,,
$$
where we will drop the dependence on the set if $W=B_1$, and the set of admissible functions 
$$\mathcal K_{\text{\tiny\sc ob}}:=\big\{u\in H^1(B_1)\ :\  u\ge 0\ \text{in}\ B_1\big\}.$$

\begin{definition}
A function $u\in \mathcal K_{\text{\tiny\sc ob}}$ is a \emph{$(\Lambda, \alpha, r_0)$ almost minimizer of the obstacle problem in $B_1$} if 
\begin{equation}\label{e:intro:ostacolo}
\mathcal \cF_{\text{\tiny\sc ob}}(u, W)\leq (1+\Lambda \,r^\alpha)\,\mathcal \cF_{\text{\tiny\sc ob}}(v,W) \qquad \forall v\in\mathcal K_{\text{\tiny\sc ob}}
\end{equation}
for all $W \subset\subset B_r(x) \subset U$ with $0 < r < r_0$. 
\end{definition}

The relevant energy is given by the Weiss' boundary adjusted energy
$$
\mathcal E(u)=W_0^2(u) + \int_{B_1} \max\{ u, 0\}-\int_{B_1}\max\{\sigma,0\}\,, 
$$
where $\sigma$ is a $2$-homogeneous global minimizer of the energy. In particular we will assume $\sigma$ to belong to the following class:
\[
\mathcal B := \{Q_A \colon \R^d \to \R \,:\, Q_A(x) = x \cdot Ax,\,\text{ A symmetric non-negative with }\tr A = 1/4\} \,.
\]
It is a simple conputation to see that $\mathcal E$ satisfies the almost monotonicity \ref{ass_symepi} (1) when $u$ is rescaled as $u_{r}(x)=u(rx)/r^2$ (see for instance \cite{cospve}). So in order to apply Theorem \ref{thm:main} we only need to check the symmetric epiperimetric inequality. This follows as a minor modification of \cite[Proposition 3.1]{cospve}, which we outline in the next section for the reader's convenience.
\begin{prop}[Symmetric epiperimetric inequality for the obstacle problem]\label{p:epio}
Let $\sigma \in \mathcal B$. There are constants $\delta(\sigma),E(\sigma) > 0$, $\gamma(\sigma) \in [0, 1)$, so that the following holds.

Let $z\in H^1(\partial B_1)\cap\mathcal K$ be such that
	$$\|z-\eta\|_{L^2(\partial B_1)}\leq \delta\qquad\text{and}\qquad |\mathcal F_{\text{\tiny\sc ob}}(r z)-\mathcal F_{\text{\tiny\sc ob}} (\sigma)|\le E\,.$$ 
Then there is an $h \in H^1(B_1)$ with $h|_{\del B_1} = z$, so that
\begin{equation}
\mathcal E(h) - \mathcal E(r^2 z)  \leq - \eps |\mathcal E(r^2 z)|^{1+\gamma}.
\end{equation}

\end{prop}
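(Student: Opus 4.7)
The plan is to adapt the construction of \cite[Proposition 3.1]{cospve} along the lines of Theorem \ref{thm:ac-epi}. The standard log-epiperimetric inequality in \cite{cospve} handles $\cE(r^2 z)_+$ by \emph{shrinking} the coefficients of $z - \sigma$ along positive eigenmodes of the Hessian of the spherical Weiss energy at $\sigma$; for the symmetric version we must simultaneously \emph{grow} the coefficients along negative eigenmodes, which likewise lowers $\cE$.

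First I would set up the Lyapunov--Schmidt machinery at the $2$-homogeneous minimizer $\sigma = Q_A \in \mathcal{B}$, exactly as in \cite[\S3]{cospve}. Let $\delta^2 \cE_S(\sigma)$ denote the Hessian of the spherical Weiss energy associated to $\cE$, a self-adjoint bilinear form on $H^{1/2}(\del B_1)$ with discrete spectrum; let $\{\zeta_i\}$ be an $L^2$-orthonormal eigenbasis with eigenvalues $\bar\lambda_i$, set $K := \vecspan\{\zeta_i : \bar\lambda_i = 0\}$ (finite dimensional, containing the tangent directions to $\mathcal{B}$), and $K^\pm := \vecspan\{\zeta_i : \pm\bar\lambda_i > 0\}$. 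There is a Lyapunov--Schmidt map $Y : K \cap U \to K^\perp$ and an analytic reduced functional $G : K \cap U \to \R$ with $G(0) = 0 = DG(0)$ and a \L ojasiewicz--Simon inequality $|G(\mu)|^{1-\beta} \le c(\sigma)\,|DG(\mu)|$ for some $\beta \in (0, 1/2]$. Setting $\gamma := 1 - 2\beta \in [0,1)$, I would decompose $z - \sigma = \xi^T + \xi^\perp_+ + \xi^\perp_-$ with $\xi^T \in K + Y(K)$ and $\xi^\perp_\pm \in K^\pm$, writing $\xi^T$ in $K$-coordinates $\mu_0$ and $\xi^\perp = \sum_i \alpha_i \zeta_i$.

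Next I would build the competitor as in Lemma \ref{lem:ac-z1-epi}. Let $\mu(t)$ solve the normalized gradient-flow ODE $\mu' = -DG(\mu)/|DG(\mu)|$ with $\mu(0) = \mu_0$, and (handling the trichotomy $G(\mu_0) = 0$, $>0$, $<0$ as there) pick an interpolation $\eta : [0,1] \to [0, t_*)$ with $|\eta'| \le 3b|G(\mu_0)|^{1-\beta}$ satisfying
\[
G(\mu(\eta(r))) - G(\mu_0) \le -\frac{b}{c(\sigma)}|G(\mu_0)|^{2-2\beta}(1-r) \quad \text{for } r \in [0, 1/2].
\]
Take $\eta_-(r) = 1 + a\eps(1-r)$ and $\eta_+(r) = 1 - a'\eps(1-r)$ with $a(n), a'(n)$ so that $\int_0^1 (\eta_-^2 - 1)\,r^{n-1}\,dr \ge 4\eps/n$ and $\int_0^1 (\eta_+^2 - 1)\,r^{n-1}\,dr \le -4\eps/n$, and set
\[
g(r,\theta) := [\mu(\eta(r)) + Y(\mu(\eta(r)))](\theta) + \eta_-(r)\,\xi^\perp_-(\theta) + \eta_+(r)\,\xi^\perp_+(\theta), \quad h(x = r\theta) := r^2 g(r, \theta).
\]
By construction $h|_{\del B_1} = z$, and $h \ge 0$ once $\delta$ is small.

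Finally I would estimate $\cE(h) - \cE(r^2 z)$ using the radial decomposition $\cE(r^2 v) = \int_0^1 \cE_S(v(r))\,r^{n-1}\,dr + \int_0^1\!\int_{\del B_1} (\del_r v)^2\,r^{n+3}\,d\theta\,dr$ (analogous to \eqref{eqn:lem-ac-1}; see \cite[Lemma 3.2]{cospve}) and Taylor-expanding $\cE_S$ around $\sigma$ to second order as in Lemma \ref{lem:ac-z1-epi}. Using $\delta^2\cE_S(\sigma)[\zeta_i, \zeta_j] = \bar\lambda_i\delta_{ij}$ and the construction of $\eta_\pm$, the spherical contribution to $\cE(h) - \cE(r^2 z)$ is bounded above by $-(b/c)|G(\mu_0)|^{2-2\beta} - (2\eps/n)\sum_i|\bar\lambda_i|\alpha_i^2 + c\,\omega(\delta^{1/7})\,\|\xi^\perp\|_{H^{1/2}}^2$, and the radial-derivative term by $c(b^2 |G(\mu_0)|^{2-2\beta} + \eps^2\|\xi^\perp\|_{H^{1/2}}^2)$. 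Choosing $\eps(\sigma)$, then $b(\sigma, \eps)$, then $\delta(\sigma, \eps, b)$ sufficiently small, combined with the identity $c_n(\cE(r^2 z) - \cE(r^2\sigma)) = G(\mu_0) + O(\|\xi^\perp\|_{H^{1/2}}^2)$ and $|a-b|^{1+\gamma} \ge 2^{-\gamma}|a|^{1+\gamma} - |b|^{1+\gamma}$, this converts into $\cE(h) - \cE(r^2 z) \le -\eps'\,|\cE(r^2 z)|^{1+\gamma}$. The main obstacle I expect is the symmetric handling of $K^-$: the standard construction of \cite{cospve} only modifies $K^+$ modes (whose shrinking lowers $\cE$ when $\cE(r^2 z) \ge 0$), and one has to verify carefully that growing negative-mode coefficients via $\eta_-(r) > 1$ reduces $\cE$ by a matching amount in the obstacle functional, whose $\int\max\{u, 0\}$ term makes the second variation slightly more subtle than in the Alt-Caffarelli case; the hypothesis $|\cF_{\text{\tiny\sc ob}}(rz) - \cF_{\text{\tiny\sc ob}}(\sigma)| \le E$ is what lets us absorb the quadratic $\|\xi^\perp\|_{H^{1/2}}^2$ gain into $|\cE(r^2 z)|^{1+\gamma}$.
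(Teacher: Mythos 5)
Your proposal takes a genuinely different route from the paper, and the route has a gap that the paper's actual strategy is specifically designed to avoid.

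The paper does not re-run the Lyapunov--Schmidt construction of Lemma \ref{lem:ac-z1-epi} for the obstacle problem. Instead it factors the argument through an abstract Proposition \ref{p:epiK}: this is \cite[Proposition 3.1]{cospve} with the \L ojasiewicz assumption (\L S) upgraded to a \emph{constrained} \L ojasiewicz inequality (\L S') carrying absolute values, $|\cF(u)-\cF(\psi)|^{1-\gamma}\le C_L\,\|\nabla\cF(u)\|_{\mathcal K}$, where $\|\cdot\|_{\mathcal K}$ is the constrained gradient norm. The proof of Proposition \ref{p:epiK} is then verbatim the gradient-flow argument of \cite{cospve}, and the substance of Section 4.2 is the verification of (\L S'). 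That verification decomposes $u-\varphi=Q_-+Q_0+\eta$, and---crucially---builds a test variation $\tilde u=2Q_-+Q_0+\varphi+\tfrac{2M}{c_d}(c_d-\varphi)$ that is \emph{proved pointwise nonnegative} by a case analysis on $\{\varphi\gtrless 2C\delta\}$ and a careful estimate of $M=\max(-2Q_--Q_0-\varphi)$ via the Lipschitz bound $\|\nabla(2Q_-+Q_0+\varphi)\|_{L^\infty}\le L$. It is exactly this $\tilde u\in\mathcal K$, used as a direction in the constrained gradient, that allows the negative eigenmodes $Q_-$ to contribute to $\|\nabla\cF(u)\|_{\mathcal K}$ \emph{with the right sign}, giving the two-sided estimate.

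The gap in your proposal is the assertion ``$h\ge 0$ once $\delta$ is small.'' In the Alt--Caffarelli argument of Lemma \ref{lem:ac-z1-epi}, nonnegativity of $h_1(r,\theta)=r\kappa(r)\phi_1^{\Omega_{g(r)}}$ is automatic because $\phi_1^{\Omega_{g(r)}}$ is the first (positive) Dirichlet eigenfunction; the modes $\xi^\perp_\pm$ perturb the \emph{domain}, not the sign of the function. For the obstacle problem there is no such structural protection: the $\zeta_i$ are sign-changing spherical eigenfunctions, the hypothesis $\|z-\sigma\|_{L^2}\le\delta$ and $|\cF_{\text{\tiny\sc ob}}(rz)-\cF_{\text{\tiny\sc ob}}(\sigma)|\le E$ give no $L^\infty$ control, and $\sigma=Q_A$ vanishes on its nodal set, so rescaling $\xi^\perp_\pm$ by $\eta_\pm(r)\ne 1$ can push $g(r,\cdot)$ below zero near $\{\sigma=0\}$ no matter how small $\delta$ is. Without $h(r,\cdot)\in\mathcal K$ the competitor $h$ is inadmissible and the inequality $\cE(h)\le\cE(r^2z)-\eps|\cE(r^2z)|^{1+\gamma}$ says nothing for the constrained problem. (Your worry about the $\int\max\{u,0\}$ term is not where the subtlety lies: on $\mathcal K$ that term is just $\int u$, linear.) If you want to keep your direct-competitor route, you would need to add a corrective term---precisely the role of $\tfrac{2M}{c_d}(c_d-\varphi)$ in the paper's proof of (\L S')---and re-derive the $L^\infty$ bound on $M$ in terms of $\|\xi^\perp\|_{H^{1/2}}$, at which point you have essentially re-proven the constrained (\L S') and the abstract route of Proposition \ref{p:epiK} is the cleaner packaging.
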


Using Proposition \ref{p:epio} and the monotonicity of $\mathcal E$, we can apply once again Theorem \ref{thm:main} to the energy $\mathcal E(u_r)=W_2(u_r)$ to obtain the following uniqueness of blow-ups and blow-down at singular points. We remark that the uniqueness for blow ups with logarithmic decay had already been established in \cite{cospve1}.

\begin{cor}[Uniqueness of blow-downs for almost minimizers of obstacle problem]\label{cor:o-unique}
Let $u \in H^1_{loc}(\R^n)$ be an entire minimizer of $\mathcal F_{\text{\tiny\sc ob}}$.  Suppose, for some $r_i \to \infty$, $u_{r_i} \to u_0$ in $L^2_{loc}$. Then there are constants $\gamma(u_0) \in [0, 1)$, $\delta(u_0) > 0$, $C(u)$, so that
\[
||u_r - u_0||_{L^2(\del B_1)} \leq \left\{ \begin{array}{l l}C \log(r)^{\frac{\gamma-1}{2\gamma}} & \gamma > 0 \\ C r^{-\frac{\delta}{2}} & \gamma = 0 \end{array} \right. \quad \forall r > 1.
\]
Here $\gamma$ as in Proposition \ref{p:epio}
\end{cor}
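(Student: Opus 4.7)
The plan is to verify Assumption \ref{ass_symepi} on a half-line $[r_1,\infty)$ with $\Lambda_\pm=0$ and then invoke Corollary \ref{cor:unique}. The almost-monotonicity \eqref{eqn:main-hyp1} is the Weiss formula recorded above Proposition \ref{p:epio}, so the remaining inputs are (i) that $u_0$ lies in the class $\mathcal B$ of $2$-homogeneous global minimizers so that Proposition \ref{p:epio} is applicable at all, and (ii) that the trace closeness $\|u_r-\sigma\|_{L^2(\partial B_1)}\le\delta$ required by Proposition \ref{p:epio} holds not just along the subsequence $\{r_i\}$ but on an entire tail. For (i) I would argue as in Example \ref{ex:ac}: the map $r\mapsto\mathcal{E}(u_r)$ is nondecreasing and converges along $\{r_i\}$ to $\mathcal{E}(u_0)$, so its full limit exists, \eqref{eqn:main-hyp1} yields $\int_1^\infty\tfrac{1}{r}\int_{\partial B_1}r^2|\partial_r u_r|^2\,d\haus^{n-1}\,dr<\infty$, and passing to the limit forces $u_0$ to be $2$-homogeneous. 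Since a blow-down of an entire minimizer is itself a global minimizer, $u_0=\sigma\in\mathcal B$ and $\mathcal{E}(u_0)=0$; in particular $\mathcal{E}(u_r)\le 0$ for every $r$.

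The main obstacle is (ii), a bootstrap analogous to Theorem \ref{thm:ac-param}. Standard compactness and regularity for almost minimizers of $\mathcal{F}_{\text{\tiny\sc ob}}$ promote the $L^2_{loc}(\R^n)$ convergence to trace convergence $u_{r_i}|_{\partial B_1}\to\sigma$ in $L^2(\partial B_1)$. With $\delta(u_0),E(u_0),\gamma(u_0)$ as in Proposition \ref{p:epio}, I would fix $i$ so large that $\|u_{r_i}-\sigma\|_{L^2(\partial B_1)}<\delta/4$, $|\mathcal{E}(u_r)|<E$ for every $r\ge r_i$, and $c(u_0)|\mathcal{E}(u_{r_i})|^{(1-\gamma)/2}<\delta/4$, with $c$ the constant from \eqref{eqn:main-concl2}. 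Setting $r_1=r_i$ and
\[
R_*=\sup\{R\ge r_1:\|u_r-\sigma\|_{L^2(\partial B_1)}<\delta\text{ for every }r\in[r_1,R]\},
\]
Assumption \ref{ass_symepi} then holds on $[r_1,R_*]$ with $\Lambda_\pm=0$ and $G=\mathcal{E}\le 0$, so Theorem \ref{thm:main}(2) combined with Remark \ref{rem:diff-ineq} gives
\[
\|u_r-u_{r_1}\|_{L^2(\partial B_1)}\le\int_{r_1}^{R_*}\|\partial_r u_r\|_{L^2(\partial B_1)}\,dr\le c(u_0)|\mathcal{E}(u_{r_1})|^{(1-\gamma)/2}<\delta/4
\]
for every $r\in[r_1,R_*]$. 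Hence $\|u_r-\sigma\|_{L^2(\partial B_1)}<\delta/2$ throughout, contradicting the maximality of $R_*$ unless $R_*=\infty$. The key feature making this bootstrap close is that \eqref{eqn:main-concl2} depends only on $|\mathcal{E}(u_{r_1})|$, not on $R_*$, so shrinking the initial energy by enlarging $r_1$ produces a strict improvement that rules out saturation.

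Having $R_*=\infty$, I would conclude by applying Corollary \ref{cor:unique} to produce an $L^2(\partial B_1)$-limit $u_\infty$ with exactly the logarithmic or polynomial rate claimed; subsequence convergence $u_{r_i}\to u_0$ then forces $u_\infty=u_0$, and the constant $C(u)$ in the statement absorbs the trivial uniform bound on $\|u_r-u_0\|_{L^2(\partial B_1)}$ over the bounded initial interval $[1,r_1]$.
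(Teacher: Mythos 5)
Your proposal is correct and tracks the route the paper intends: verify Assumptions~\ref{ass_symepi} with $\Lambda_\pm=0$ via the Weiss monotonicity and Proposition~\ref{p:epio}, use the monotone energy and the Dini estimate \eqref{eqn:main-concl2} to bootstrap the $L^2(\partial B_1)$-trace closeness from the subsequence $\{r_i\}$ to an entire tail $[r_1,\infty)$, and then invoke Corollary~\ref{cor:unique} with $r_3=\infty$. The paper itself states the corollary tersely (``we can apply once again Theorem~\ref{thm:main}'') and, unlike the Alt--Caffarelli section where Theorem~\ref{thm:ac-param} plays this role, does not spell out the analogous propagation-of-closeness step for the obstacle problem; your bootstrap, using the fact that $G=\mathcal{E}\le 0$ so the Dini bound is controlled by $|\mathcal{E}(u_{r_1})|^{(1-\gamma)/2}$ alone, correctly supplies that missing step.
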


\subsection{Proof of the symmetric epiperimetric inequality Proposition \ref{p:epiK}} The symmetric epiperimetric inequality of Propostion \ref{p:epio} follows from the following  general proposition, analogous to \cite[Proposition 3.1]{cospve}. We fix $\mathcal H=L^2(\partial B_1)$, $\mathcal K\subset \mathcal H$ a convex cone, $\mathcal W=H^2(\partial B_1)$.

\begin{prop}\label{p:epiK}
Let $\mathcal G$ be a functional satisfying assumptions (SL) and (FL) of Proposition 3.1 in \cite{cospve}, while assumption (\L S) is replaced by

\smallskip

(\L S') $\cF$ has the constrained \L ojasiewicz property, that is there are constants $\gamma\in\,]0,1/2]$, $C_L>0$, $\delta_L>0$ and $E_L>0$, depending on $\psi\in \mathcal W\cap\mathcal K$ a critical point of $\cF$ in $\mathcal K$, such that for every critical point $\mathcal \varphi \in \mathcal S$ the following inequality holds 
\begin{equation}\label{e:main:loja}
\big|\mathcal F(u)-\mathcal F(\psi)\big|^{1-\gamma}\le C_L\|\nabla\mathcal F(u)\|_{\mathcal K},
\end{equation}
for every $u\in \mathcal K\cap\mathcal W$ such that $\|u-\varphi\|\le \delta_L$ and $|\mathcal F(u)-\mathcal F(\varphi)|\le E_L$.

\smallskip

Then there are constants $\delta_0>0$ and $E>0$, depending only on the dimension and $\psi$, such that: if $c\in H^1(\partial B_1)\cap\mathcal K$ satisfies  
	$$\|c-\psi\|_{L^2(\partial B_1)}\leq \delta_0\qquad\text{and}\qquad |\mathcal F(c)-\mathcal F(\psi)|\le E,$$ 
	then there exists a function $h=h(r,\theta)\in H^1(B_1)$ satisfying $h(r,\cdot)\in\mathcal K$, for every $r\in(0,1]$, and 
	\begin{equation}\label{e:logepiK}
	\mathcal G(h)-\mathcal G(\phi)\leq \left(\mathcal G(z)-\mathcal G(\phi) \right) -\eps \left|\mathcal G(z)-\mathcal G(\phi)\right|^{2-2\gamma} 
	\end{equation}
	where $\phi(r,\theta):=r^k\psi(\theta)$, $z(r,\theta):=r^k c(\theta)$, $\eps>0$ is a universal constant and $\gamma>0$ is the exponent from  (\L S). 
\end{prop}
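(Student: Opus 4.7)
The plan is to mimic the proof of Lemma \ref{lem:ac-z1-epi} in the abstract framework of \cite[Prop.~3.1]{cospve}. Writing $e_0 := \mathcal F(c) - \mathcal F(\psi)$, the structural assumptions (SL) and (FL) identify $\mathcal G(z) - \mathcal G(\phi)$ with $c_0\, e_0$ up to an explicit dimensional constant $c_0>0$, so the target estimate reduces to producing a competitor $h(r,\theta) := r^k\, c(\eta(r),\theta)\in H^1(B_1)$ with $h(r,\cdot)\in\mathcal K$ that beats the $k$-homogeneous extension $z$ in $\mathcal G$-energy by at least $\varepsilon |e_0|^{2-2\gamma}$. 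Here $c(t)$ is a constrained \L ojasiewicz-gradient flow of $\mathcal F$ on $\mathcal K$ starting at $c$, and $\eta(r)$ is a radial time-profile to be chosen.

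First I would set up the flow: let $c(t) \in \mathcal K$ solve the (normalized) projected gradient flow
\[
\dot c(t) = -\,\frac{P_{T\mathcal K}\nabla\mathcal F(c(t))}{\|P_{T\mathcal K}\nabla\mathcal F(c(t))\|}, \qquad c(0)=c.
\]
Since $\mathcal K$ is a convex cone, $c(t)\in\mathcal K$ for all $t$ in the relevant range, and the assumption (\L S') yields $\tfrac{d}{dt}\mathcal F(c(t)) = -\|P_{T\mathcal K}\nabla\mathcal F(c(t))\| \leq -C_L^{-1}|\mathcal F(c(t))-\mathcal F(\psi)|^{1-\gamma}$ as long as $c(t)$ stays in the $\delta_L$-neighborhood of $\psi$. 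The profile $\eta(r)$ is then chosen exactly as in the proof of Lemma \ref{lem:ac-z1-epi}: if $e_0<0$, take $\eta(r) = b|e_0|^{1-\gamma}(1-r)$, since the flow forces $e(t):=\mathcal F(c(t))-\mathcal F(\psi) \leq e_0 < 0$ so $|e(t)|$ is non-decreasing and the \L ojasiewicz estimate applies at full strength; if $e_0>0$, split on the first-exit time $t_1 := \sup\{t : e(t)\geq e_0/2\}$, taking $\eta(r) = b\, e_0^{1-\gamma}(1-r)$ when $t_1 \geq b\, e_0^{1-\gamma}$ and a truncated smooth $\eta$ with $\eta|_{[0,1/2]}=t_1$, $\eta(1)=0$ otherwise; if $e_0=0$, simply set $\eta\equiv 0$. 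In each case one obtains $e(\eta(r))-e_0 \leq 0$ on $[0,1]$, $e(\eta(r))-e_0 \leq -c_1 b|e_0|^{2-2\gamma}$ on $[0,1/2]$, and $|\eta'(r)|\leq 3b|e_0|^{1-\gamma}$.

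Next I would use (SL) and (FL) to decompose the $\mathcal G$-energy as in \cite[Section~3]{cospve} into a spherical integral plus a radial $|\partial_r|^2$ correction; schematically,
\[
\mathcal G(h) - \mathcal G(z) \leq \int_0^1 \bigl(\mathcal F(c(\eta(r))) - \mathcal F(c)\bigr)\, r^{n-1}\,dr + C\int_0^1 |\eta'(r)|^2\, r^{n+1}\,dr.
\]
The first integrand is non-positive on $[0,1]$ and bounded above by $-c_1 b|e_0|^{2-2\gamma}$ on $[0,1/2]$, so the first integral is $\leq -c_1' b|e_0|^{2-2\gamma}$ with $c_1'>0$ depending on $n,k$. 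For the second, $|\eta'|\leq 3b|e_0|^{1-\gamma}$ gives $\leq c_2 b^2|e_0|^{2-2\gamma}$. Choosing $b=b(\psi)$ small enough that $c_1' b - c_2 b^2 \geq \varepsilon >0$ yields \eqref{e:logepiK}.

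The main technical obstacle is ensuring the flow stays inside both $\mathcal K$ and the \L ojasiewicz neighborhood of $\psi$ over the entire time interval $[0,\eta(0)] = [0, b|e_0|^{1-\gamma}]$, so that (\L S') and the flow equation can be applied pointwise in $r$. This pins the quantitative constants: $E$ must be chosen so that $b E^{1-\gamma}\ll \delta_L$, and $\delta_0$ chosen so that the initial displacement $\|c-\psi\|\leq \delta_0$ plus the flow's total length $bE^{1-\gamma}$ keeps $c(t)$ within the $\delta_L$-ball where (\L S') holds. Cone-preservation along the projected gradient flow is standard convex analysis on $\mathcal K$. Compared to the positive-sign case of \cite[Prop.~3.1]{cospve}, the only genuinely new feature is tracking absolute values throughout; as noted above this actually makes the $e_0<0$ branch the \emph{easier} one, since the flow cannot steer $e(t)$ back toward zero.
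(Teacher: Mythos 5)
Your proposal is correct and takes essentially the same approach as the paper's, which defers to the proof of \cite[Proposition~3.1]{cospve}: a constrained gradient flow of $\mathcal F$ on $\mathcal K$ starting at $c$, a radial time-profile $\eta(r)$ whose form depends on $\operatorname{sign}(e_0)$ and on a first-exit time $t_1$, and the (SL)/(FL) energy splitting $\mathcal G(h)-\mathcal G(z)\leq \int(\mathcal F(c(\eta(r)))-\mathcal F(c))r^{n-1}dr + C\int|\eta'|^2 r^{n+1}dr$, with the free constant $b$ chosen small. The two ``modifications'' the paper announces in its one-paragraph proof (changing the definition of the stopping time to $|\mathcal F(u_0)-\mathcal F(\psi)|\leq 2|\mathcal F(u(t))-\mathcal F(\psi)|$, and inserting absolute values in the final chain of inequalities via (\L S')) are precisely what your case analysis of $e_0<0$, $e_0=0$, $e_0>0$ implements, so the argument you wrote is a correct unpacking of what the paper's proof gestures at.
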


\begin{proof}
The proof is the same as that of \cite[Proposition 3.1]{cospve}, where one replaces the choice of $\eps_2$ in equation (3.5) with 
	\begin{equation}\label{e:eps_2}
	|\cF(u_0)-\cF(\psi)|\leq 2 \big|\cF(u(t))-\cF(\psi)\big)|\quad\text{for every} \quad 0< t \leq \eps_2\,,
	\end{equation}
and in the last string of inequalities in page 16 one uses \eqref{e:main:loja} to replace $\cF(u(t))-\cF(\psi)$ with $|\cF(u(t))-\cF(\psi)|$.
\end{proof}

\begin{proof}[Proof of Proposition {\ref{p:epio}}] The proof follows if we can verify the assumptions of Proposition \ref{p:epiK}. To this aim, we observe that properties $(SL)$ and $(FL)$ hold when we set
$$
\mathcal G: =\cF_{\text{\tiny\sc ob}} \qquad \text{and}\qquad \cF(\phi):= \int_{\partial B_1}\left(|\nabla_\theta \phi|^2-2d\,\phi^2\right)\,d\cH^{n-1}\,,
$$
as explained in the proof of \cite[Theorem 1.10]{cospve}. So we only need to show that property $(\L S')$ holds for $\cF$. Even though this property is stronger than property $(\L S)$, its proof is similar to that of \cite[Proposition 4.4]{cospve}. So following the reasoning there we let $\varphi\in\mathcal S$ be such that 
$$\|u-\varphi\|_2\le 2\delta.$$ 
Notice that $u-\varphi$ can be uniquely decomposed in Fourier series as $u-\varphi=Q_-+Q_0+\eta$, where $Q_-$ contains only lower eigenmodes (corresponding to eigenvalues $<2d$), $Q_0$ is a $2d$-eigenfunction and $$\displaystyle{\eta(x)=\sum_{\{j:\lambda_j>2d\}}c_j\phi_j(x)},$$ 
which contains only higher eigenmodes (corresponding to eigenvalues $>2d$).  Thus, 
$$u=Q_-+Q_0+\varphi+\eta\quad \text{ and }\quad \|Q_-\|_2, \|Q_0\|_2, \|\eta\|_2\le 2\delta\,.$$
We now consider 
$$M:=\max_{x\in {\mathcal M}}\{-2Q_-(x)-Q_0(x)-\varphi(x)\}$$ 
and suppose that the maximum is realized at a point $x_M\in\mathcal M$. Notice that since $Q_-+Q_0$ is a finite sum of (smooth) eigenfunctions, there is a constant $C>0$ (depending onnly on the dimension) such that $\|Q_-+Q_0\|_{L^\infty}\le C\delta$. Thus, if $M>0$, then $x_M\in\{\varphi<C\delta\}$ and $M\le C\delta$. We now choose $\delta$ such that $10C\delta< c_d:=(2d)^{-\frac12}$  and we claim that the function
	$$\tilde u=2Q_-+Q_0+\varphi+\frac{2M}{c_d}\big(c_d-\varphi\big)$$
	is non-negative. Indeed, it is sufficient to consider the following two cases:
	
	$\bullet$ on the set $\{\varphi\ge 2C\delta\}$, we have that  
	$$\tilde u=\left(2Q_-+Q_0+\frac12\varphi\right)+2M+\varphi\left(\frac12-\frac{2M}{c_d}\right)\ge 0,$$
	since each of the three terms is non-negative;
	
	$\bullet$ on the set $\{\varphi\le 2C\delta\}$, we have that
	$$\tilde u\ge 2Q_-+Q_0+\varphi+\frac{2M}{c_d}\big(c_d-2C\delta\big)\ge   2Q_-+Q_0+\varphi+M\ge 0.$$

Next, using the fact that $c_d-\varphi$ is a $2d$-eigenfunction (notice that the integral of $c_d-\varphi$ on $\partial B_1$ vanishes, due to the fact that $2d>0$), we calculate 
	\begin{equation*}
	\begin{split}
	-(\tilde u-u)\cdot\nabla \cF(u)
	&= \int_{\partial B_1}\big(-\Delta  u - 2d  u + 1\big) \left(-Q_-+\eta-\frac{2M}{c_d}\left(c_d-\varphi\right)\right)\, d\cH^{d-1}\\
	&= \int_{\partial B_1}\big(-\Delta  u - 2d  u + 1\big) \,\left(\eta-Q_-\right)\, d\cH^{d-1}\\
	&= \int_{\partial B_1}\big(-\Delta  (Q_-+Q_0+\eta) - 2d  (Q_-+Q_0+\eta)\big) \,\left(\eta-Q_-\right)\, d\cH^{d-1}\\
	&= \int_{\partial B_1} \left(|\nabla \eta|^2 -\lambda \eta^2\right) \,d\cH^{d-1}-\int_{\partial B_1} \left(|Q_-\eta|^2 -\lambda Q_-^2\right)\, d\cH^{d-1} \\
	&=2\left(\mathcal F(\eta) - \mathcal F(Q_-)\right).
	\end{split}
	\end{equation*}
		Notice that since the set of eigenvalues is discrete, there is a (spectral gap) constant $G(d)>0$ such that $|\lambda_j-2d|\ge G(\lambda)$, whenever $|\lambda_j-2d|>0$. In particular, we have the inequalities 
		\begin{gather}
		2\mathcal F(\eta)= \sum_{j:\lambda_j>2d} c_j^2(\lambda_j-2d)\ge G(\lambda)\sum_{j:\lambda_j>\lambda} c_j^2=G(\lambda)\|\eta\|_2^2\notag\\
		-2\mathcal F(Q_-)= -\sum_{j:\lambda_j<2d} c_j^2(\lambda_j-2d)\ge G(\lambda)\sum_{j:\lambda_j>2d} c_j^2=G(\lambda)\|Q_-\|_2^2\notag
		\end{gather}
		Thus, we get by definition of $\|\cdot\|_{\mathcal K}$,
	\begin{equation}\label{e:ostacolo:nablaF}
	\begin{split}
	\|\nabla \cF(u)\|_{\mathcal K}& \ge \frac{-(\tilde u-u)\cdot\nabla \cF(u)}{\|u-\tilde u\|_{2}} \geq\frac{2(\mathcal F(\eta)-\mathcal F(Q_-))}{\frac{2M}{c_\lambda}\|c_\lambda-\varphi\|_{2}+\|\eta\|_{2}+\|Q_-\|_{2}} \\
	&\geq \frac{2(\mathcal F(\eta)-\mathcal F(Q_-))}{M\frac{2}{c_d}(\|c_d\|_2+\|\varphi\|_{2})+(2G(\lambda)\mathcal F(\eta))^{\frac12}+ (-2G(\lambda)\mathcal F(Q_-))^{\frac12}}\\
	&\geq C\frac{\mathcal F(\eta)-\mathcal F(Q_-)}{M+3\left(\mathcal F(\eta)-\mathcal F(Q_-)\right)^{\frac12}},\end{split}
	\end{equation}
where $C$ is a constant depending only on the dimension.

Now, in order to get \eqref{e:main:loja}, it only remains to estimate $M$ and put together \eqref{e:ostacolo:nablaF} and \eqref{e:ostacolo:F-F}. We notice that: 

$\bullet$ $2Q_-+Q_0+\varphi$ is a (finite) linear combination of (orthonormal and smooth) eigenfunctions corresponding to eigenvalues $\le2d$; 

$\bullet$ the $L^2$ norm of $2Q_-+Q_0+\varphi$ is bounded by a universal constant. 

\noindent As a consequence, there is a universal (Lipschitz) constant $L$, depending only on the dimension, such that $\|\nabla (2Q_-+Q_0+\varphi)\|_{L^\infty(\mathcal M)}\le L$.  
\noindent Thus, since the negative part $\psi:=-\inf\{(2Q_-+Q_0+\varphi),0\}$ is such that $\sup\psi=M$ is small enough (bounded by a dimensional constant, as already mentioned above), we get that there is a constant $C$ (depending on the dimension) such that 
$$\|\psi\|_{L^2(\partial B_1)}^2\ge C L^{-d}M^{d+2}=C L^{-d}\|\psi\|_{L^\infty(\partial B_1)}^{d+2}.$$
Since $u\ge 0$ on $\partial B_1$, we have that $\psi\le \eta-Q_-$ and so, 
$$M^{d+2}\le C^{-1}L^d\left(\|\eta\|_2^2+\|Q_-\|_2^2\right)\le \frac{2L^d}{CG(\lambda)}\left(\mathcal F(\eta)-\mathcal F(Q_-)\right),$$
which, together with \eqref{e:ostacolo:nablaF} and \eqref{e:ostacolo:F-F}, implies that, if we set $\gamma=\frac{1}{d+2}$, then
$$
\|\nabla \cF(u)\|_{\mathcal K}\geq C\, \left(\mathcal F(\eta)-\mathcal F(Q_-)\right)^{1-\gamma}\,.
$$
Using that, by orthogonality, it holds	
	\begin{align}\label{e:ostacolo:F-F}
	|\cF(u)- \cF(\varphi)| =|\mathcal F(\eta)+\mathcal F(Q_-)|\leq \mathcal F(\eta)-\mathcal F(Q_-)\,,
	\end{align}
we conclude \eqref{e:main:loja}. 

\end{proof}

\section{Almost-minimizing currents}\label{sec:am}

We are interested here in the regularity properties of almost minimizing currents.  Given $\alpha > 0$, $\Lambda \geq 0$, and $r_0 > 0$, an integral $n$-current $T$ in an open subset $U \subset \R^{n+k}$ is called $(\Lambda, \alpha, r_0)$-almost minimizing in $U$ if
\begin{gather}
||T||(W) \leq ||T+\del R||(W) + \Lambda r^{n+\alpha} 
\end{gather}
for $x \in \spt T$, $W \subset\subset B_r(x) \subset U$ with $r < r_0$, and all integral $(n+1)$-currents $R$ supported in $W$. The same definition carries over in the obvious way to integral $n$-currents in a Riemannian manifold $N^{n+k}$.

Analogous to varifolds with bounded mean-curvature, there is a monotonicity formula \eqref{eqn:am-mono}, an Allard-type regularity theorem \cite{Bo}, and a compactness theorem (Lemma \ref{lem:am-compact}) for almost-minimizing currents.  Recall that the density ratios of $T$ are given by
\[
\theta_T(x, r) := \frac{||T||(B_r(x))}{\omega_n r^n} .
\]
Following the computations of \cite{DeSpSp}, we have the inequality
\begin{equation}\label{eqn:am-mono}
\left[\theta_T(x, r) + \frac{n\Lambda}{\omega_n \alpha} r^\alpha\right] - \left[ \theta_T(x, s) + \frac{n\Lambda}{\omega_n \alpha} s^\alpha\right] \geq \frac{1}{2\omega_n} \int_{B_r(x) \setminus B_s(x)} \frac{|\pi_T^\perp(y - x)|^2}{|y - x|^{n+2}} d||T||(y)
\end{equation}
for all $B_s(x) \subset B_r(x) \subset U$.  In particular $r \mapsto \theta_T(x, r) + \frac{n\Lambda}{\omega_n \alpha} r^\alpha$ is increasing.

Let us also remark that the general compactness/regularity theory of almost-minimizing currents in $N^{n+k}$ is essentially the same as in $\R^{n+k}$.  For, if $g$ is a $C^2$ Riemannian metric on $B_1 \subset \R^{n+k}$, satisfying $|g - \geucl|_{C^2} \leq \delta$, then (provided $\delta(n)$ is sufficiently small) we have $(1-\delta)|x - y| \leq d_g(x, y) \leq (1+\delta)|x - y|$, and $B_{(1-\delta)r}(x) \subset B^g_r(x) \subset B_{(1+\delta)r}(x)$, and for every $B_r(x) \subset B_{1-10\delta}$ there is a normal change of coordinates $\phi_x : B^g_{(1+\delta)r}(x) \to B^g_{(1+\delta)r}(x)$, in which the metric satisfies $|(\phi_x^* g)(z) - \geucl| \leq c(n) \delta |z - x|^2$.  It's straightforward to verify that if $T$ is $(\Lambda, \alpha, r_0)$-almost-minimizing in $(B_1, g)$, then $T$ is $(\Lambda + C \delta , \min\{ \alpha, 2\}, (1-\delta)r_0)$-almost-minimizing in $(B_{1-10\delta}, \geucl)$, for $C$ a constant depending on $n, \Lambda, ||T||(B_1)$.

\vspace{3mm}

A log-epiperimetric inequality for smooth, minimizing cones was established by \cite{EnSpVe2}, who used it to prove uniqueness of smooth, multiplicity-one tangent cones for almost-minimizing currents.  We outline here the symmetric log-epiperimetric inequality for smooth cones -- the details are essentially the same as in Section \ref{sec:ac}.

Take $\bC^n \subset \R^{n+k}$ a minimal cone with smooth cross section $\Sigma$.  If $u : \Sigma \to \Sigma^\perp$ or $h : \bC \cap A_{R, \rho} \to \bC^\perp$, define the spherical graphing functions
\begin{align}
G_\Sigma(u) &= \left\{ \frac{ \theta + u(\theta)}{|\theta + u(\theta)|} : \theta \in \Sigma \right\}, \\
G_\bC(h) \cap A_{R,\rho} &= \left\{ |x| \frac{ x + h(x)}{|x + h(x)|} : x \in \bC \cap A_{R, \rho} \right\} .
\end{align}
For ease of notation write $G_\bC(h) = G_\bC(h) \cap A_{1, 0}$.  Recall that we defined $A_{R, \rho}(x) = B_R(x) \setminus \overline{B_\rho}(x)$, and $A_{R, 0}(x) = B_R(x) \setminus \{0\}$.

By a straightforward adaption of the proof in Section \ref{sec:ac}, one can prove the following symmetric log-epiperimetric inequality.  We remark that the epiperimetric inequality as stated in \cite{EnSpVe2} used the $C^{1,\alpha}$ norm, but it is not hard to see that it suffices to consider only the $C^1$ norm.
\begin{theorem}[Symmetric log-epiperimetric inequality for smooth minimal cones]\label{thm:am-epi}
Let $\bC^n \subset \R^{n+k}$ (for $n \geq 2$) be a smooth, minimal cone, with cross section $\Sigma = \bC \cap \del B_1$.  There exist $\eps(\bC, k), \delta(\bC, k)$ positive, and a $\gamma(\bC, k) \in [0, 1)$ so that the following holds.

Let $u : \Sigma \to \Sigma^\perp$ be such that $|u|_{C^1} < \delta$.  Then there is a function $h \in C^1(\bC \cap B_1 \setminus \{0\}, \bC^\perp)$ satisfying
\begin{gather}\label{eqn:epi-concl1}
h|_{\bC \cap \del B_1} = u, \quad |x|^{-1} |h| + |\nabla h| \leq c(\Sigma, k) |u|_{C^1}^{(1+\gamma)/2},
\end{gather}
so that
\begin{gather}\label{eqn:epi-concl2}
\haus^n(G_\bC(h)) \leq \haus^n(G_\bC(z)) - \eps|\haus^n(G_\bC(z)) - \haus^n(G_\bC(0))|^{1+\gamma},
\end{gather}
where $z(x) = |x| u(x/|x|)$ is the $1$-homogenous extension of $u$.  If $\bC$ is integrable,\footnote{As in \cite{EnSpVe2}, integrable means that every $1$-homogenous Jacobi field on $\bC$ can be realized by a $1$-parameter family of smooth, minimal cones.} then one can take $\gamma = 0$.
\end{theorem}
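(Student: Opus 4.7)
The plan is to adapt the proof of Lemma \ref{lem:ac-z1-epi} (the Alt-Caffarelli ``inner variation'' epiperimetric) to the minimal cone setting, borrowing the framework of \cite{EnSpVe2} but making the same two modifications we made in Section \ref{sec:ac}: (i) we must modify \emph{both} the positive and negative Jacobi modes of $u$ (rather than only, say, pushing in the stable direction), and (ii) we put the absolute value on the right-hand side so that the inequality has content when $\haus^n(G_\bC(z)) \leq \haus^n(G_\bC(0))$. The starting point is the polar decomposition of the graph area: for $h(r,\theta)$ with $h(r,\cdot):\Sigma\to\Sigma^\perp$ small in $C^1$, a standard computation gives
\[
\haus^n(G_\bC(h)) = \int_0^1 A_\Sigma(h(r,\cdot))\, r^{n-1}\, dr + \int_0^1\!\int_\Sigma R(h,\partial_r h)\, r^{n+1}\, d\theta\, dr,
\]
where $A_\Sigma$ is the spherical area functional and $R$ is the (nonnegative, quadratic in $\partial_r h$ to leading order) radial derivative term. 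This plays the role of \eqref{eqn:lem-ac-1} and reduces matters to a spherical variational problem.

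Next I would set up the Lyapunov--Schmidt reduction exactly as in the proof of Lemma \ref{lem:ac-z1-epi}. Let $J_\Sigma$ be the Jacobi operator on $\Sigma$, with $L^2$-orthonormal eigensections $\zeta_i$ and eigenvalues $\bar\lambda_i$; let $K=\mathrm{span}\{\zeta_i:\bar\lambda_i=0\}$, and let $Y:K\cap U\to K^\perp$ be the Lyapunov--Schmidt map associated with $A_\Sigma-A_\Sigma(0)$. The \L ojasiewicz--Simon theorem of Simon gives that the reduced finite-dimensional functional $G(\mu):=A_\Sigma(\mu+Y(\mu))-A_\Sigma(0)$ is analytic and satisfies
\[
|G(\mu)|^{1-\beta}\le c(\bC)\,|DG(\mu)|,\qquad \beta\in(0,1/2],
\]
with $\beta=1/2$ exactly in the integrable case. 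I would then decompose $u=u^T+u^\perp$ with $u^T=P_K u+Y(P_K u)$, and split $u^\perp=u^\perp_++u^\perp_-$ according to $P_\pm$ on the $\bar\lambda_i\gtrless 0$ modes; the regularity theory for smooth minimal cones ensures the projectors are bounded in $C^1$. Write $\mu_0=P_K u\in K$, so $|\mu_0|\lesssim|u|_{C^1}$.

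The competitor is then constructed exactly as in Section \ref{sec:ac}. Define a flow $\mu(t)$ by the normalized gradient flow $\mu'=-DG(\mu)/|DG(\mu)|$ starting at $\mu_0$ (with $\mu(t)\equiv\mu_0$ if $G(\mu_0)=0$). Choose a function $\eta(r):[0,1]\to[0,\infty)$ according to three cases ($G(\mu_0)=0$, $G(\mu_0)<0$, $G(\mu_0)>0$) as in the Alt-Caffarelli proof, so that $G(\mu(\eta(r)))-G(\mu_0)\le -(b/c)|G(\mu_0)|^{2-2\beta}(1-r)$ on $[0,1/2]$ and $|\eta'|\lesssim b|G(\mu_0)|^{1-\beta}$. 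Pick interpolation profiles $\eta_-(r)=1+a(1-r)\epsilon$ and $\eta_+(r)=1-a'(1-r)\epsilon$ with $a,a'$ large and $\epsilon$ small so that $\int_0^1(\eta_\pm^2-1)r^{n-1}dr=\mp 4\epsilon/n$. Define
\[
h(r,\theta)=\mu(\eta(r))+Y(\mu(\eta(r)))+\eta_-(r)u^\perp_-(\theta)+\eta_+(r)u^\perp_+(\theta),
\]
interpreted so that $G_\bC(h)\cap A_{1,0}$ has the required boundary value $u$ on $\del B_1$. The polar area formula then splits the excess $\haus^n(G_\bC(h))-\haus^n(G_\bC(z))$ into the reduced functional contribution and the $R$-term: the first yields $-(b/c)|G(\mu_0)|^{2-2\beta}$ from the flow, the second yields $-\eps/n$ times the weighted sum $\sum_i|\bar\lambda_i|\alpha_i^2$ from the choice of $\eta_\pm$, while the Taylor remainder of $A_\Sigma$ at $g^T(r)$ and the radial derivative cost $c b^2|G(\mu_0)|^{2-2\beta}+c\eps^2\|u^\perp\|_{H^{1/2}}^2$ are absorbed by fixing $b(\bC),\eps(\bC),\delta(\bC)$ small. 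Finally, using $|a-b|^{1+\gamma}\ge 2^{-\gamma}|a|^{1+\gamma}-|b|^{1+\gamma}$ together with the identity $\haus^n(G_\bC(z))-\haus^n(G_\bC(0))=G(\mu_0)+\frac12\sum_i\bar\lambda_i\alpha_i^2+O(\omega(\delta)\|u^\perp\|_{H^{1/2}}^2)$ gives \eqref{eqn:epi-concl2} with $\gamma=\frac{1-2\beta}{2-2\beta}$, and in particular $\gamma=0$ when $\bC$ is integrable.

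The main obstacle will be the spherical analogue of the second-variation expansion \eqref{eqn:lem-ac-2}: one must show that $\delta^2 A_\Sigma$ at a perturbed configuration $g^T(r)+\tau g^\perp(r)$ differs from $\delta^2 A_\Sigma(0)$ by a modulus of continuity in $\|g^T(r)\|_{C^1}$ times $\|g^\perp(r)\|_{H^{1/2}}^2$, uniformly in $\tau\in[0,1]$. For real-valued problems this is the content of \cite[Lemma A.5]{EnSpVe}; for the normal bundle $\Sigma^\perp$ the same computation applies but requires the $C^{2,\alpha}$ regularity of the nearby minimal cones in the moduli space (which is how integrability enters). The bound $|h|+|x|\,|\nabla h|\lesssim|u|_{C^1}^{(1+\gamma)/2}$ in \eqref{eqn:epi-concl1} then follows directly from the construction, since $\|\mu(\eta(r))\|_{C^1}\lesssim\delta^{1/6}$ and $\|u^\perp\|_{C^1}\lesssim|u|_{C^1}$; as in \cite{EnSpVe2}, the $C^1$ (rather than $C^{1,\alpha}$) norm suffices because the only use of the higher norm in that paper is a Schauder estimate which can be avoided by working directly with the Jacobi operator on $\Sigma$.
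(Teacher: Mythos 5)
Your proposal is correct and matches the paper's intended approach: the paper proves Theorem \ref{thm:am-epi} only by declaring it ``a straightforward adaptation of the proof in Section \ref{sec:ac}'', and your argument carries out exactly that adaptation of the construction in Lemma \ref{lem:ac-z1-epi} (polar decomposition of the graph area, Lyapunov--Schmidt reduction onto the Jacobi kernel $K$, normalized gradient flow of the reduced functional $G$, the case analysis in $\eta(r)$ according to the sign of $G(\mu_0)$, and --- crucially for the symmetric version --- the interpolation profiles $\eta_\pm$ acting on \emph{both} the stable and unstable spectral projections $u^\perp_\pm$), combined with the framework of \cite{EnSpVe2}. One small arithmetic slip worth flagging: with your \L ojasiewicz exponent $\beta$ (so $|G|^{1-\beta}\leq c|DG|$ and $\beta\in(0,1/2]$), the resulting epiperimetric exponent is $\gamma=1-2\beta$ rather than $\tfrac{1-2\beta}{2-2\beta}$, since the flow contributes a gain of order $|G(\mu_0)|^{2-2\beta}$ and absorbing the $|G(\mu_0)|^{1+\gamma}$ piece of $|\haus^n(G_\bC(z))-\haus^n(G_\bC(0))|^{1+\gamma}$ requires $2-2\beta\leq 1+\gamma$; this still gives $\gamma\in[0,1)$ and $\gamma=0$ in the integrable case, and does not affect any other step of your argument.
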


Let $T$ be a $(\Lambda, \alpha, 1)$-almost-minimizing $n$-current in $B_1 \subset \R^{n+k}$ satisfying $\del T = 0$ and 
\[
T \llcorner A_{1, \rho} = [G_\bC(u) \cap A_{1, \rho}], \quad |x|^{-1} |u| + |\nabla u| \leq \delta,
\]
for $\bC$ a smooth minimizing cone in $\R^{n+k}$ and some $\rho \in [0, 1/2]$.  Provided $\delta(\bC, k)$ is sufficiently small, the coarea formula, the computation \cite[(2.5)]{DeSpSp}, and the Hardt-Simon inequality (see e.g. \cite[(11)]{Simon1}) imply that
\begin{equation}
\frac{d}{dr} \theta_T(0, r) \geq \frac{n}{2r} (\theta_{T_r}(0, r) - \theta_{T}(0, r)) + \frac{1}{4\omega_n r} \int_\Sigma r^2|\del_r u_r|^2 - \Lambda r^{\alpha - 1} \quad \forall r \in (\rho, 1),
\end{equation}
where $T_r$ is the cone over $\del (T \llcorner B_r)$, and $u_r(x) = r^{-1} u(rx)$.

On the other hand, from Theorem \ref{thm:am-epi} we get the symmetric epiperimetric inequality
\begin{equation}\label{eqn:am-epi}
\theta_T(0, r) \leq \theta_{T_r}(0, r) - \eps |\theta_{T_r}(0, r) - \theta_\bC(0)|^{1+\gamma} + \Lambda r^\alpha \quad \forall r \in (\rho, 1) ,
\end{equation}
where $T_r$ denotes the cone over $\del (T \llcorner B_r)$.

Ensuring $\Lambda, |\theta_T(0, 1) - \theta_\bC(0)|, |\theta_T(0, \rho) - \theta_\bC(0)|$ are sufficiently small (depending only on $n, \alpha$), we have by \eqref{eqn:ac-mono} that
\[
|\theta_T(0, r) - \theta_\bC(0)| \leq 1, \quad \forall r \in (\rho, 1).
\]
One can then prove an exact analogue of Theorem \ref{thm:main}, with $\Sigma$ in place of $\del B_1$ and $\theta_T(0, r) - \theta_\bC(0)$ in place of $\cE(u_r)$ and $\theta_{T_r}(0, r) - \theta_\bC(0)$ in place of $\cE(z_r)$, to deduce the function
\[
G(r) = \theta_T(0, r) - \theta_\bC(0) + 3\alpha^{-1} \Lambda r^\alpha
\]
satisfies $G' \geq (\delta'/r) |G|^{1+\gamma}$ as in \eqref{eqn:main-concl1}, and to deduce the Dini estimate
\begin{align}
&\int_{\bC \cap A_{1, \rho}} |\del_r (u/r)| r^{1-n} \nonumber \\
&\leq c(\bC, \alpha)( |\theta_T(0, 1) - \theta_\bC(0)|^{(1-\gamma)/2} + |\theta_T(0, \rho) - \theta_\bC(0)|^{(1-\gamma)/2} + \Lambda^{(1-\gamma)/2}). \label{eqn:am-dini}
\end{align}
A direct application is the following analogue of Theorem \ref{thm:ac-param} for almost-minimizing currents, which says that graphicality propagates as long as the density stays close to the density of the cone (c.f. \cite[Theorem 13.1]{Sim}).
\begin{theorem}\label{thm:am-param}
Let $\bC^n \subset \R^{n+k}$ be a smooth, minimal cone.  Given $\eps, \alpha > 0$, there are $\Lambda(\bC, \eps, \alpha, k)$, $\delta(\bC, \eps, \alpha, k)$ positive so that the following holds.

Let $T$ be $(\Lambda, \alpha, 1)$-almost-minimizing in $B_1 \subset \R^{n+k}$ with $\del T = 0$, and let $\rho \geq 0$, such that
\begin{gather}
\spt T \cap A_{1, 1/2} = G_\bC(u) \cap A_{1, 1/2}, \quad |u|_{C^1} \leq \delta \\
\theta_T(0, 1) \leq \theta_\bC(0) + \delta, \quad \theta_T(0, \rho/2) \geq \theta_\bC(0) - \delta.
\end{gather}
Then $\spt T \cap A_{1, \rho} = G_\bC(u) \cap A_{1,\rho}$, with $|x|^{-1} |u| + |\nabla u| \leq \eps$.  Moreover, $u$ admits the Dini-type estimate
\begin{gather}
\int_{\bC \cap A_{1, \rho}} |\del_r (u/r)| r^{1-n} \leq \eps.
\end{gather}
\end{theorem}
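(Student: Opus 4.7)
I would follow the template of Theorem \ref{thm:ac-param}, via a continuity-contradiction argument on the smallest scale at which graphicality holds. Denote the graphing function in the hypothesis by $u$ (it is to be extended as the annulus shrinks), and define
\[
\rho_* = \inf\bigl\{\rho' \in [\rho, 1/2] \,:\, \spt T \cap A_{1, \rho'} = G_\bC(u) \cap A_{1, \rho'} \text{ with } |x|^{-1}|u| + |\nabla u| \leq \eps\bigr\}.
\]
By hypothesis this set contains $1/2$, so $\rho_* \leq 1/2$. The goal is to prove $\rho_* = \rho$; the Dini estimate on $A_{1, \rho}$ will fall out along the way.

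On the interval $[\rho_*, 1]$ the small $C^1$-bound on $u$ places us in the regime where both the almost-monotonicity \eqref{eqn:am-mono} and the symmetric log-epiperimetric inequality \eqref{eqn:am-epi} (supplied by Theorem \ref{thm:am-epi}) hold. Combining almost-monotonicity with $\theta_T(0,1) \leq \theta_\bC(0)+\delta$ and $\theta_T(0,\rho/2)\geq \theta_\bC(0)-\delta$ gives $|\theta_T(0,r) - \theta_\bC(0)| \leq \delta + c(\bC,\alpha)\Lambda$ for all $r \in [\rho_*, 1]$. I then invoke the abstract decay-growth Theorem \ref{thm:main} with $\cE(u_r) = \theta_T(0, r) - \theta_\bC(0)$ and $\cE(z_r) = \theta_{T_r}(0, r) - \theta_\bC(0)$, exactly as in the discussion preceding \eqref{eqn:am-dini}, to obtain
\[
\int_{\bC \cap A_{1, \rho_*}} |\del_r(u/r)|\, r^{1-n} \leq c(\bC, \alpha)(\delta + \Lambda)^{(1-\gamma)/2},
\]
which, for $\delta, \Lambda$ small enough in terms of $\bC, \alpha, \eps$, is at most $\eps$.

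It remains to rule out $\rho_* > \rho$. The Dini bound combined with \eqref{eqn:diff-ineq} (adapted to $\Sigma$) forces the rescaled graphing functions $u_r$ to form an $L^2(\Sigma)$-Cauchy family as $r \downarrow \rho_*$, with total variation controlled by $c(\delta+\Lambda)^{(1-\gamma)/2}$. Telescoping against the initial bound $|u|_{C^1}\leq\delta$ at scale $1$, the rescaled current $T^* := (\eta_{0, \rho_*})_\# T$ is multiplicity-one and $L^2$-close to $\bC$ in a neighborhood of $\del B_1$, while $|\theta_{T^*}(0,1) - \theta_\bC(0)| = |\theta_T(0, \rho_*) - \theta_\bC(0)| \leq \delta + c\Lambda$. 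Allard-type $\eps$-regularity for almost-minimizing currents near smooth minimal cones \cite{Bo} then yields $C^1$-graphicality of $\spt T^*$ over $\bC$ on, say, $A_{3/2, 3/4}$ with $C^1$-norm $\leq \eps/2$. Rescaling back produces a graphical representation of $\spt T$ on an annulus that strictly contains $A_{1, \rho_*}$, contradicting the definition of $\rho_*$.

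The main obstacle is this last step: converting the weighted $L^1$-control of $\del_r(u/r)$ furnished by the Dini estimate, together with closeness of density to $\theta_\bC(0)$, into pointwise $C^1$-graphicality on an annulus strictly below $\rho_*$. This requires an Allard-Bombieri style $\eps$-regularity theorem quantitative in both the almost-minimality parameter $\Lambda$ and the $L^2$-closeness to $\bC$. With that in hand, the remainder of the argument is a direct transposition of the Alt-Caffarelli proof of Theorem \ref{thm:ac-param}.
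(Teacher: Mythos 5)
Your proposal is correct and follows essentially the same route as the paper: define the least radius $\rho_*$ down to which graphicality holds, apply the abstract decay-growth Theorem \ref{thm:main} (via \eqref{eqn:am-dini}) on $[\rho_*,1]$ to get the Dini bound, and push graphicality strictly past $\rho_*$ to contradict minimality. The ``Allard--Bombieri $\eps$-regularity plus compactness'' step that you flag as the main obstacle is precisely what the paper packages as Lemma \ref{lem:am-extend}, proved by a contradiction/compactness argument from Lemma \ref{lem:am-compact} and \cite{Bo}.
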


\begin{proof}
Same as Theorem \ref{thm:ac-param}, using the Dini estimate \eqref{eqn:am-dini}, and Lemma \ref{lem:am-extend} in place of Lemma \ref{lem:ac-extend}.
\end{proof}

\begin{lemma}\label{lem:am-extend}
Let $\bC^n \subset \R^{n+k}$ be a smooth minimal cone.  Given $\eps, \alpha > 0$, there are $\Lambda(\bC, \eps, \alpha, k)$, $\delta(\bC, \eps, \alpha, k)$ positive so that the following holds.

Let $T$ be $(\Lambda, \alpha, 1)$-minimizing integral $n$-current in $T$ with $\del T = 0$, and suppose that
\begin{gather}
\spt T \cap A_{1, 1/2} = G_\bC(u) \cap A_{1, 1/2}, \quad |u|_{C^1} \leq \delta, \\
\theta_T(0, 1) \leq \theta_\bC(0) + \delta, \quad \theta_T(0, 1/8) \geq \theta_\bC(0) - \delta.
\end{gather}
Then $\spt T \cap A_{1, 1/4} = G_\bC(u) \cap A_{1, 1/4}$, with $|u|_{C^1} \leq \eps$.
\end{lemma}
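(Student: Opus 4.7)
The plan is to argue by contradiction and compactness, mirroring the proof of Lemma \ref{lem:ac-extend}. Suppose the lemma fails for some fixed $\bC$, $\eps > 0$, $\alpha > 0$: then there exist sequences $\Lambda_i, \delta_i \to 0$, currents $T_i$, and graphing functions $u_i$ on $\bC \cap A_{1, 1/2}$ satisfying the hypotheses but for which the conclusion fails. Since the $T_i$ are $(\Lambda_i, \alpha, 1)$-almost minimizing with $\Lambda_i \to 0$ and have uniformly bounded mass (via \eqref{eqn:am-mono} and $\theta_{T_i}(0, 1) \leq \theta_\bC(0) + \delta_i$), the compactness theorem (Lemma \ref{lem:am-compact}) produces a subsequential limit $T_\infty$ which is a genuine area-minimizing integral $n$-current with $\del T_\infty = 0$. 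The bound $|u_i|_{C^1} \leq \delta_i \to 0$ forces $T_\infty \llcorner A_{1, 1/2} = \bC \llcorner A_{1, 1/2}$, and in particular $\theta_{T_\infty}(0, 1) = \theta_\bC(0)$.

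Next I would upgrade this to $T_\infty \llcorner A_{1, 1/8} = \bC \llcorner A_{1, 1/8}$ via a density-pinching argument. Monotonicity \eqref{eqn:am-mono} with $\Lambda = 0$ gives that $r \mapsto \theta_{T_\infty}(0, r)$ is increasing, while the assumption $\theta_{T_i}(0, 1/8) \geq \theta_\bC(0) - \delta_i$ passes to the limit as $\theta_{T_\infty}(0, 1/8) \geq \theta_\bC(0)$. Combined with the upper bound at $r = 1$, this forces $\theta_{T_\infty}(0, r) \equiv \theta_\bC(0)$ on $[1/8, 1]$, and the equality case of the monotonicity formula forces $T_\infty$ to be a cone on $A_{1, 1/8}$. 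Since $T_\infty$ already agrees with $\bC$ on $A_{1, 1/2}$, unique continuation of cones gives $T_\infty \llcorner A_{1, 1/8} = \bC \llcorner A_{1, 1/8}$.

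Finally I would invoke the Allard-type $\eps$-regularity of \cite{Bo} for almost-minimizing currents near a smooth, multiplicity-one minimal cone. On any compact subset of $A_{1, 1/8}$, we have $T_i \to \bC$ in the flat sense, with excess and $\Lambda_i$ both tending to zero, so for $i$ large $\spt T_i \cap A_{1, 1/4}$ is a $C^{1,\beta}$ graph $G_\bC(u_i')$ with $|u_i'|_{C^1} \to 0$; by uniqueness of the graphical representation, $u_i'$ extends $u_i$, contradicting the assumed failure of the conclusion. The main obstacle in this argument is the density-pinching middle step: it is essential that the lower density bound is imposed at the strictly smaller radius $1/8$ rather than $1/4$, so that the slack $\Lambda_i r^\alpha$ in the almost-monotonicity vanishes in the limit strictly outside the annulus $A_{1,1/4}$ on which we want to apply Allard's theorem, and so that $T_\infty$ is forced to be conical (not merely close to conical) on a neighborhood of that annulus.
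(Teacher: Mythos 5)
Your proposal is correct and reconstructs exactly the argument the paper has in mind: the paper's own proof of Lemma~\ref{lem:am-extend} is a single sentence invoking contradiction/compactness, Lemma~\ref{lem:am-compact}, and the Allard-type regularity of \cite{Bo} as $\Lambda,\delta\to 0$, which is precisely the chain you filled in (pass to a minimizing limit, use density pinching and the equality case of monotonicity to show the limit is $\bC$ on $A_{1,1/8}$, then upgrade the $T_i$ to graphs on $A_{1,1/4}$ by $\eps$-regularity). The only quibble is your closing remark about the role of the radius $1/8$: the slack $\Lambda_i r^\alpha$ vanishes everywhere in the limit since $\Lambda_i\to 0$, so the real reason for the gap between $1/8$ and $1/4$ is simply that the $\eps$-regularity theorem needs the limit to agree with $\bC$ on a strictly larger annulus than the one on which you want to conclude graphicality.
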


\begin{proof}
Follows by a straightforward contradiction argument, using Lemma \ref{lem:am-compact}, \cite{Bo} and taking $\Lambda, \delta \to 0$.
\end{proof}

Since the other ingredients involved (monotonicity, compactness, partial regularity) already have direct analogues for the case of almost-minimizers, most of the results in \cite{Ed} carry over to almost-minimizers.  For example, we have:
\begin{theorem}[Almost-minimizers near Simons' cones]\label{thm:am-lious}
Let $\bC^n \subset \R^{n+1}$ be a minimizing quadratic hypercone, and let $\{ S_\lambda\}_\lambda$ be the associated Hardt-Simon foliation (see \cite{EdSp} for notation).  Given $\eps, \alpha > 0$, there is a $\delta(\bC, \eps, \alpha) > 0$ so that the following holds.

Let $T$ be a $(\delta, \alpha, 1)$-almost-minimizing $n$-current in $B_1$ with $\del T = 0$, and suppose that
\begin{equation}\label{eqn:am-simons-hyp}
d_H(\spt T \cap B_1, \bC \cap B_1) \leq \delta, \quad (1/2)\theta_\bC(0) \leq \theta_T(0, 1/2), \quad \theta_T(0, 1) \leq (3/2) \theta_\bC(0).
\end{equation}
Then we can find an $a \in \R^{n+1}$, $\lambda \in \R$, $q \in SO(n+1)$, satisfying
\begin{equation}\label{eqn:am-simons-concl1}
|a| + |q - Id| + |\lambda| \leq \eps,
\end{equation}
and a $C^{1}$ function $u : (a + q(S_\lambda)) \cap B_{1/2}(a) \to S_\lambda^\perp$, so that
\begin{equation}\label{eqn:am-simons-concl2}
\spt T \cap B_{1/4} = \graph_{a + q(S_\lambda)}(u) \cap B_{1/4}, \quad |x - a|^{-1} |u| + |\nabla u| \leq \eps.
\end{equation}

If $\bC^n \subset \R^{n+1}$ is a general smooth (away from $0$) area-minimizing hypercone, and one additionally assumes $\spt T$ lies to one side of $\bC$, then the same conclusion holds with $a = 0, q = Id$.  Moreover, if $T$ is in fact mass-minimizing, then either $\spt T = \bC \cap B_1$ or $\lambda \neq 0$.
\end{theorem}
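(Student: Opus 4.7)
The plan is to mirror the blow-up/rescaling scheme of Theorem \ref{thm:ac-lious}, substituting the corresponding ingredients for almost-minimizing currents: the monotonicity \eqref{eqn:am-mono}, Allard-type regularity \cite{Bo}, the compactness Lemma \ref{lem:am-compact}, and the propagation-of-graphicality Theorem \ref{thm:am-param}. I argue by contradiction. Suppose not: there exist $\eps_0, \alpha_0 > 0$, $\delta_i \to 0$, and $(\delta_i, \alpha_0, 1)$-almost-minimizing currents $T_i$ in $B_1$ with $\del T_i = 0$ satisfying \eqref{eqn:am-simons-hyp} but violating \eqref{eqn:am-simons-concl1}--\eqref{eqn:am-simons-concl2} for every admissible $(a, q, \lambda)$.

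First, fix a small tolerance $\delta' = \delta'(\bC, \eps_0)$ chosen via Theorem \ref{thm:am-param} so that a density drop of at most $\delta'$ propagates graphicality with $C^1$-error at most $\eps_0/2$. Let $\rho_i \in [0, 1/2]$ be the smallest radius at which $|\theta_{T_i}(0, r) - \theta_\bC(0)| \leq \delta'$ for all $r \in (\rho_i, 9/10)$; the Hausdorff closeness of $\spt T_i$ to $\bC$ in \eqref{eqn:am-simons-hyp} together with the almost-monotonicity \eqref{eqn:am-mono} forces $\rho_i \to 0$. If $\rho_i = 0$ for some $i$, then Theorem \ref{thm:am-param} produces a graphical representation of $\spt T_i$ over $\bC$ with small $C^1$-norm, yielding the conclusion with $(a, q, \lambda) = (0, Id, 0)$ and contradicting the failure of \eqref{eqn:am-simons-concl2}. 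Hence $\rho_i > 0$ for every $i$.

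Next, rescale: let $T_i'$ be the push-forward of $T_i$ under $x \mapsto x/\rho_i$, which is $(\delta_i \rho_i^{\alpha_0}, \alpha_0, 1/\rho_i)$-almost-minimizing on $B_{1/\rho_i}$. By the minimality of $\rho_i$, for every $R > 1$ and $i$ large, $|\theta_{T_i'}(0, r) - \theta_\bC(0)| \leq \delta'$ on $(1, R)$ with saturation at $r = 1$. By Lemma \ref{lem:am-compact} a subsequence $T_i' \to T_\infty$ flatly, and $T_\infty$ is an entire area-minimizing $n$-current on $\R^{n+1}$ (the almost-minimizing defect $\delta_i \rho_i^{\alpha_0} \to 0$) with $\del T_\infty = 0$ and $\spt T_\infty$ close to $\bC$ at infinity. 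The saturation at $r = 1$ together with Theorem \ref{thm:am-param} prevents $T_\infty = \bC$, so $T_\infty$ is a nontrivial entire minimizer asymptotic to $\bC$.

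The classification of $T_\infty$ is the crux. In the one-sided case, $\spt T_\infty$ lies on one side of $\bC$, and the Hardt--Simon foliation theorem (see \cite{EdSp}) plus the strong maximum principle forces $T_\infty = [S_{\lambda_\infty}]$ for some $\lambda_\infty > 0$; the mass-minimizing dichotomy $\spt T = \bC$ versus $\lambda > 0$ is built into this step. In the quadratic case, one invokes Simon's uniqueness of tangent cones at infinity \cite{Simon1} and the integrability of Jacobi fields on quadratic cones to identify $T_\infty$ with a translate/rotate $a_\infty + q_\infty(S_{\lambda_\infty})$, with $|a_\infty|, |q_\infty - Id|$ bounded by the initial Hausdorff distance. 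Once $T_\infty$ is identified, the asymptoticity of $S_{\lambda_\infty}$ to $\bC$ gives a $C^1$-graphical representation on some fixed large annulus $A_{R, 1}$ with $R = R(\bC, \eps_0)$. Applying Theorem \ref{thm:am-param} to $T_i'$ on $A_{R, 1}$ and then rescaling back by $\rho_i$ produces parameters $(a_i, q_i, \lambda_i) \to (0, Id, 0)$ together with a graphical $u_i$ over $a_i + q_i(S_{\lambda_i})$ satisfying \eqref{eqn:am-simons-concl1}--\eqref{eqn:am-simons-concl2}, the desired contradiction. The main obstacle is the classification of $T_\infty$ in the quadratic case, since the full $(a, q)$ moduli must be absorbed; this is exactly where ``quadratic'' (as opposed to merely ``smooth minimizing'') enters the hypothesis.
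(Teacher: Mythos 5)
Your plan is essentially the same contradiction/rescaling scheme the paper adopts by importing the proof of \cite[Theorem 2.8]{Ed} wholesale, with Theorem~\ref{thm:am-param} replacing the propagation theorem there, Lemma~\ref{lem:am-compact} and \cite{Bo} replacing Allard compactness and regularity, and the Hardt--Simon foliation together with the Liouville theorem of \cite{HaSi} handling the one-sided case. One correction on the crux step: in the quadratic case (no one-sided hypothesis), the paper classifies the rescaled limit $T_\infty$ using Simon's Liouville-type theorem for stable minimal hypersurfaces \cite{simon:liousville}, not the cylindrical-tangent-cone paper \cite{Simon1}; the latter is not the right tool here, and ``uniqueness of tangent cones at infinity'' on its own does not identify $T_\infty$ as a rotated/translated foliation leaf. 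With that citation replaced, your outline matches the paper's proof.
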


\begin{proof}
Same as in \cite[Theorem 2.8]{Ed}, except with Theorem \ref{thm:am-param} in place of \cite[Theorem 13.1]{Ed}, and Lemma \ref{lem:am-compact}, \cite{Bo} in place of Allard's varifold compactness, regularity.  If $\spt T$ lies to one side of $\bC$, then since $T$ is assumed to be (almost-)minimizing, one can use the Liouville theorem of \cite{HaSi} in place of \cite{simon:liousville}, and thereby assume only that $\bC$ is minimizing.  The very last statment, that $\spt T = \bC$ when $T$ is minimizing and $\lambda = 0$, follows from \eqref{eqn:am-simons-concl2} and the strong maximum principle \cite{Si:max}.
\end{proof}

\begin{theorem}[Finite diffeotypes of almost-minimizers]
Let $(N^{n+1}, g)$ be a closed Riemannian manifold of dimension $n+1 \leq 8$.  Given any $\Lambda, \Gamma \geq 0$, $\alpha, r > 0$, there is a constant $C(N, g, \Lambda, \alpha, r, \Gamma)$ so if $T$ is any $(\Lambda, \alpha, r)$-almost-minimizing $n$-current in $(N, g)$, with $\del T = 0$ and having mass $||T||(N) \leq \Gamma$, then $\sing T$ is discrete (if $n = 7$)/ empty (if $n \leq 6$), $\reg T$ fits into one of $C$ diffeomorphism classes, and $\spt T$ fits into one of $C$ bi-Lipschitz equivalence classes.
\end{theorem}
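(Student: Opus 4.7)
The plan is to adapt the strategy of \cite{Ed} for area-minimizers to the almost-minimizing setting, combining Bombieri's partial regularity \cite{Bo}, the compactness Lemma \ref{lem:am-compact}, and crucially the local Liouville-type Theorem \ref{thm:am-lious} proved earlier in this section. I would proceed in three stages: first upgrade the singular-set dimension bound to discreteness, then run a compactness-contradiction argument to bound the number of diffeomorphism types, and finally pass from diffeomorphism to bi-Lipschitz equivalence.

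First I would handle the structure of $\sing T$. Bombieri's $\eps$-regularity theorem for almost-minimizers gives the Hausdorff bound $\dim_H \sing T \leq n-7$, hence $\sing T = \emptyset$ when $n \leq 6$ and $\sing T$ is at most $0$-dimensional when $n = 7$. To upgrade the $n=7$ case to \emph{isolated} singular points, I would fix $x_0 \in \sing T$ and observe that every tangent cone to $T$ at $x_0$ is a genuine area-minimizing integral $7$-cone in $\R^8$, since under blow-up the almost-minimizing error $\Lambda r^{n+\alpha}$ is subcritical. By the classification of area-minimizing hypercones in $\R^8$, such a cone $\bC$ is smooth away from the origin, so Theorem \ref{thm:am-lious} applies at sufficiently small scales around $x_0$ and shows $\spt T$ is, in a neighborhood of $x_0$, diffeomorphic either to $\bC$ itself or to a leaf of its Hardt-Simon foliation; in either case $x_0$ is isolated in $\sing T$.

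Next, the finiteness claim would go by contradiction. Assume $\{T_i\}$ is a sequence of pairwise non-diffeomorphic $(\Lambda, \alpha, r)$-almost-minimizers with mass $\leq \Gamma$. By Lemma \ref{lem:am-compact}, after passing to a subsequence $T_i \to T_\infty$ in the flat topology, where $T_\infty$ is again $(\Lambda, \alpha, r)$-almost-minimizing and mass-bounded, so by the first stage $\sing T_\infty = \{p_1, \dots, p_L\}$ is finite (using compactness of $N$). On compact subsets of $\reg T_\infty$, the Allard-type regularity \cite{Bo} upgrades the flat convergence to smooth $C^1$-graphical convergence, so for large $i$, $\spt T_i$ is a small $C^1$-graph over $\reg T_\infty$ outside tiny balls $B_\eta(p_j)$. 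Around each $p_j$ the tangent cone $\bC_j$ is smooth-away-from-$0$ and minimizing, and Theorem \ref{thm:am-lious} identifies $\spt T_i \cap B_\eta(p_j)$ (for $i$ large) as diffeomorphic to one of finitely many model pictures: either $\bC_j$ itself or one of the two leaves of the associated Hardt-Simon foliation. Gluing the graphical regular part to these finitely many local models across the finitely many singular points $p_j$ gives only finitely many global diffeomorphism types of $\reg T_i$, contradicting the assumption.

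Finally, for the bi-Lipschitz statement, each of the local models at a singular point (the cone $\bC_j$ or a smoothing $S_{\lambda_j}$) is bi-Lipschitz equivalent to $\bC_j$, since $S_{\lambda_j}$ is smooth, asymptotic to $\bC_j$ at infinity, and inherits the cone's Lipschitz cross-sectional structure after normalization. Combined with the uniform $C^1$-graphical control on $\reg T_\infty$, this lets one construct uniform bi-Lipschitz homeomorphisms among the $\spt T_i$ sharing a given diffeomorphism class, so the finite diffeomorphism count controls the bi-Lipschitz count. The main obstacle is the local step at singular points of $T_\infty$: one needs both uniqueness of the tangent cone for each $T_i$ at the nearby singular (or near-singular) point \emph{and} a stable classification of which side of $\bC_j$ the support lies on. This is exactly what the symmetric log-epiperimetric inequality of Theorem \ref{thm:am-epi}, together with the resulting local Liouville-type Theorem \ref{thm:am-lious}, is designed to deliver.
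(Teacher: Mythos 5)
The paper offers no proof of this theorem beyond the one-line citation ``Same as \cite[Theorem 2.4]{Ed}'', so there is no explicit argument in the text to compare against. Your sketch hits the ingredients one would expect---Bombieri's $\eps$-regularity \cite{Bo}, the compactness Lemma \ref{lem:am-compact}, and the propagation/Liouville machinery of Section \ref{sec:am}---and is a reasonable reconstruction of the cited argument.

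However, there is a gap in the way you invoke Theorem \ref{thm:am-lious}. As stated, that theorem applies only when the tangent cone is a \emph{quadratic} hypercone, or when $\spt T$ lies to one side of the cone. You apply it to an arbitrary minimizing tangent cone in $\R^8$, citing only that it is smooth away from the origin; that is not enough. You need either (a) the classification input that every area-minimizing hypercone in $\R^8$ with isolated singularity is quadratic (a nontrivial fact which, if that is what \cite{Ed} relies on, should be stated and referenced explicitly), or (b) a more robust local argument that avoids the Liouville step entirely. The natural route for (b) is to iterate the propagation-of-graphicality Theorem \ref{thm:am-param}, which applies to every smooth minimal cone: near each singular point of the limit, graphicality over the tangent cone propagates inward until the density drops by a fixed $\delta$; at that scale, compactness (Lemma \ref{lem:am-compact}) plus Bombieri's $\eps$-regularity produce a new smooth cone to graph over; and the iteration terminates after at most $C(\Gamma,\delta)$ steps because the total density drop is bounded by the mass bound. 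Relatedly, isolatedness of the singular set is more elementary than your Theorem \ref{thm:am-lious} argument suggests: if $q_i \in \sing T$ accumulated at $p$, rescaling by $|q_i-p|$ and invoking Lemma \ref{lem:am-compact} and \cite{Bo} would produce a singular point on $\del B_1$ of a tangent cone at $p$, contradicting its smoothness away from $0$; no Liouville input is needed there.
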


\begin{proof}
Same as \cite[Theorem 2.4]{Ed}.
\end{proof}

The following compactness theorem for almost-minimizing currents should be standard, but we were not able to find a reference.
\begin{lemma}\label{lem:am-compact}
Let $T_i$ be a sequence of $(\Lambda, \alpha, r_0)$-almost-minimizing $n$-currents in $U \subset \R^{n+k}$.  Suppose that
\[
\sup_i ||T_i||(W) + ||\del T_i||(W) < \infty \quad \forall W \subset\subset U.
\]

Then after passing to a subsequence, we can find a $(\Lambda, \alpha, r_0)$-almost-minimizing $n$-current $T$ so that $T_i \to T$ as currents and $||T_i|| \to ||T||$ as Radon measures.
\end{lemma}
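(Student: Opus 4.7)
The plan is to follow the standard compactness scheme for integral currents, keeping track of the $\Lambda r^{n+\alpha}$ defect introduced by almost-minimality.

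\emph{Limit extraction.} The Federer-Fleming compactness theorem, applied to the uniform local bounds on $||T_i||$ and $||\del T_i||$, yields a subsequence (not relabeled) and an integral $n$-current $T$ with $T_i \rightharpoonup T$, $\del T_i \rightharpoonup \del T$ weakly as currents, and $T_i \to T$ in the flat topology on each compactly contained open set. Passing to a further subsequence, weak-$\ast$ compactness of Radon measures gives $||T_i|| \rightharpoonup \mu$ as Radon measures, where $\mu \geq ||T||$ by lower semicontinuity of mass.

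\emph{Almost-minimality of the limit.} Fix $x \in \spt T$, $W \subset\subset B_r(x) \subset U$ with $r < r_0$, and an integral $(n+1)$-current $R$ with $\spt R \subset W$. Choose $r' \in (r, r_0)$ with $\mu(\del B_{r'}(x)) = 0$, $||T||(\del B_{r'}(x)) = 0$, and at which the slices $\langle T_i - T, d_x, r'\rangle$ are well-defined integer currents (all conditions holding for a.e. $r'$). A standard slicing-plus-isoperimetric argument, exploiting the flat convergence $T_i \to T$ on $B_{r'}(x)$ and the coarea slicing formula, produces integer currents $S_i$ supported in $\overline{B_{r'}(x)}$ with $M(S_i) \to 0$ such that $T_i' := T_i + \del R + S_i$ satisfies $T_i' = T + \del R$ on $B_{r'}(x)$, $T_i' = T_i$ outside $\overline{B_{r'}(x)}$, and $T_i' - T_i = \del P_i$ for an integer $(n+1)$-current $P_i$ supported in a slight enlargement of $\overline{B_{r'}(x)}$. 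For any $r'' \in (r', r_0)$, almost-minimality of $T_i$ applied in $B_{r''}(x)$ then gives
\begin{equation*}
||T_i||(B_{r'}(x)) \leq ||T + \del R||(B_{r'}(x)) + M(S_i) + \Lambda (r'')^{n+\alpha}.
\end{equation*}
Sending $i \to \infty$ (so $||T_i||(B_{r'}(x)) \to \mu(B_{r'}(x))$ and $M(S_i) \to 0$), then $r'' \to r'$, then $r' \to r^+$, and using $\mu \geq ||T||$, we obtain $||T||(W) \leq ||T + \del R||(W) + \Lambda r^{n+\alpha}$, the desired almost-minimality of $T$.

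\emph{Mass convergence and main obstacle.} Applying the argument of the previous paragraph with $R = 0$ yields $\mu(B_{r'}(x)) \leq ||T||(B_{r'}(x)) + \Lambda (r')^{n+\alpha}$ for generic $x$ and small $r'$. Covering any open $W_0 \subset\subset U$ by a Vitali-type family of such balls of maximal radius $\delta$ gives $\mu(W_0) \leq ||T||(W_0) + C(W_0) \Lambda \delta^\alpha$; letting $\delta \to 0$ yields $\mu \leq ||T||$, hence $\mu = ||T||$, i.e., $||T_i|| \to ||T||$ as Radon measures. The principal technical input is the cobounding construction producing the correction $S_i$ (and the filling $P_i$) with $M(S_i) \to 0$: this uses the Federer-Fleming deformation/isoperimetric theorem together with a careful choice of good radius $r'$ via a Fatou argument on the coarea slicing formula, ensuring that the slice of $T_i - T$ at $r'$ has mass tending to zero along the subsequence. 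The remainder is routine bookkeeping of the $\Lambda r^{n+\alpha}$ error through the limit.
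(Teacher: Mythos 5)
Your proposal is correct and follows essentially the same route as the paper: extract a limit $T$ and a limit measure $\mu$ by Federer--Fleming and weak-$*$ compactness, show $T$ inherits $(\Lambda,\alpha,r_0)$-almost-minimality via the standard cut-and-paste cobounding argument (with the defect $\Lambda r^{n+\alpha}$ passing through the limit), and upgrade the resulting local estimate $\mu(B_{r}(x)) \leq \|T\|(B_{r}(x)) + \Lambda r^{n+\alpha}$ to the global equality $\mu=\|T\|$. The only divergence is cosmetic and occurs at the very last step: you run a Vitali covering argument with disjoint balls of radius $<\delta$, using $\sum_i r_i^n \lesssim \leb(W_0')$ to control $\sum_i \Lambda r_i^{n+\alpha}\lesssim \Lambda\delta^\alpha$, whereas the paper invokes the monotonicity-formula lower density bound $\theta_T\geq 1/2$ to rewrite the estimate as $\mu(B_r)\leq(1+c\Lambda r^\alpha)\|T\|(B_r)$ and then applies Besicovitch/Radon--Nikodym differentiation to conclude $d\|T\|/d\mu\equiv 1$. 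Your covering version is arguably cleaner in that it avoids invoking the density lower bound, but the two are interchangeable and of the same depth. One small imprecision worth noting: in your competitor construction the identity $T_i' = T_i + \del R + S_i$ should be reconciled with $T_i' - T_i = \del P_i$ (you need $S_i$ to arise as the filling of the slice $\langle T_i-T, d_x, r'\rangle$ so that $\del R + S_i$ is itself a boundary), but this is the standard bookkeeping in Simon's argument and not a gap in the reasoning.
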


\begin{proof}
The proof that $T$ is $(\Lambda, \alpha, r_0)$-almost-minimizing is the same as the proof for $T_i, T$ being mass-minimizing (see e.g. \cite[Chapter 7, Theorem 2.4]{Sim}).  We highlight here how to show convergence $||T_i|| \to ||T||$.  Passing to a subsequence $i'$, we can assume that $||T_{i'}|| \to \lambda$, for some Radon measure $\lambda$.  Lower-semi-continuity of mass implies $||T||(W) \leq \lambda(W)$ for all $W \subset\subset U$.

Given $x \in \spt T \cap U$, then by the monotonicity formula \eqref{eqn:am-mono} we have $\theta_T(B_r(x)) \geq 1/2$ for all $r$ sufficiently small.  Arguing as in \cite{Sim}, we have for a.e. $r$ small the inequality
\[
\lambda(B_r(x)) \leq ||T||(B_r(x)) + \Lambda r^{n+\alpha} \leq (1+c(n)\Lambda r^\alpha)||T||(B_r(x)),
\]
which in turn implies 
\[
1 = \lim_{r \to 0} \frac{||T||(\overline{B_r(x)})}{\lambda(\overline{B_r(x)})}
\]
for $\lambda$-a.e. $x$.  Since $||T|| << \lambda$, the Radon-Nikodyn theorem implies $\lambda = ||T||$.
\end{proof}

\subsection{Currents in an annulus}\label{ssec:am}

Lastly we comment on integral $n$-currents $T$ in $A_{1, \rho} \subset \R^{n+k}$, for some $\rho \in (0, 1/2]$, with $\del T = 0$ in $A_{1, \rho}$ and which are $(\Lambda, \alpha, 1)$-almost-minimizing in $A_{1, \rho}$.  $T$ need only be defined in $A_{1, \rho}$, and we shall interpret $||T||(B_r) \equiv ||T||(A_{r, \rho})$.  In particular $\theta_T(0, r) \equiv \frac{||T||(A_{r, \rho})}{\omega_n r^n}$.

We shall additionally assume $T$ satisfies the following ``global'' almost-minimizing property:
\begin{gather}\label{eqn:am-ann-cond}
\left\{ \begin{array}{l} ||T||(B_r) \leq ||S||(B_r) + \Lambda r^{n+\alpha} \text{ for any integral $n$-current $S$ in $B_1$} \\
\text{ with $\del S = 0$ in $B_1$ and $\spt (S - T) \subset \overline{A_{r, \rho}}$} \end{array} \right.
\end{gather}
The comparison principle \eqref{eqn:am-ann-cond} arises naturally in obstacle problems like in \cite{MaNo}.

By standard slicing theory \cite[Section 6.4]{Sim}, for a.e. $r \in (\rho, 1)$ the cone $T_r$ over $\del (T \llcorner A_{r, \rho})$ (taking the boundary in $A_{1, \rho}$) is an integral $n$-current in $B_r$ with $\del T_r = \del (T \llcorner A_{r, \rho})$.  By the deformation lemma \cite[Section 6.5]{Sim}, we can find an integral $n$-current $S_r$ supported in $\del B_\rho$ with $\del S_r = \del (T_r \llcorner B_\rho)$ and $||S_r||(\del B_\rho) \leq c_0(n, k) ||T_r||(B_\rho) = c_0 \rho^n r^{-n} ||T_r||(B_r)$.

From assumption \eqref{eqn:am-ann-cond} we deduce that for a.e. $r \in (M\rho, 1)$ we have
\[
||T||(B_r) \leq (1+c_0 \rho^n r^{-n}) ||T_r||(B_r) + \Lambda r^{n+\alpha} \leq 2 ||T_r||(B_r) + \Lambda r^{n+\alpha}
\]
provided we ensure $M(n, k)$ is suficiently large.  By repeating a similar computation to \eqref{eqn:am-mono}, we deduce that
\begin{align*}
&\left[ \frac{\theta_T(0, r)}{1+c_0 \rho^n r^{-n}} + \frac{n\Lambda}{\omega_n \alpha} r^\alpha \right] - \left[ \frac{\theta_T(0, s)}{1+c_0 \rho^n s^{-n}} + \frac{n\Lambda}{\omega_n \alpha} r^\alpha \right] 
&\geq \frac{1}{4\omega_n} \int_{A_{r, s}} \frac{|\pi_T^\perp(x)|^2}{|x|^{n+2}} d||T||(x).
\end{align*}
for all $M \rho < s < r < 1$.  In particular, given any $\eps > 0$ then provided $M(n, k, \alpha, \eps)^{-1}, \Lambda(n, k, \alpha, \eps)$ are sufficiently small, we have
\begin{equation}\label{eqn:am-ann-mono-eps}
\theta_T(0, M\rho) - \eps \leq \theta_T(0, r) \leq \theta_T(0, 1) + \eps \quad \forall r \in (M\rho, 1).
\end{equation}

\vspace{3mm}

Suppose $T$ additionally satisfies
\[
T \llcorner A_{1, M \rho} = [G_\bC(u) \cap A_{1, M\rho}], \quad |x|^{-1} |u| + |\nabla u| < \delta.
\]
Now for every $r \in (M \rho, 1)$, we note that $||T_r||(B_r) \leq c_1(\bC, k) r^n$ provided $\delta(\bC, k)$ is sufficiently small, and therefore we can argue as in the previous section to deduce the monotonicity
\begin{align*}
\frac{d}{dr} \theta_{T}(0, r) 
&\geq \frac{n}{2r} (\theta_{T_r}(0, r) - \theta_{T}(0, r)) + \frac{1}{4\omega_n r} \int_\Sigma r^2|\del_r u_r|^2 - c_1 \rho^n r^{-n-1} - \Lambda r^{\alpha - 1} \\
&\geq \frac{n}{2r} (\theta_{T_r}(0, r) - \theta_{T}(0, r)) + \frac{1}{4\omega_n r} \int_\Sigma r^2|\del_r u_r|^2 - c_1 M^{\alpha-n} \rho^\alpha r^{-\alpha-1} - \Lambda r^{\alpha - 1} 
\end{align*}
and epiperimetric inequality
\begin{equation*}
\theta_{T}(0, r) \leq \theta_{T_r}(0, r) - \eps |\theta_{T_r}(0, r) - \theta_\bC(0)|^{1+\gamma} + c_1 M^{\alpha-n} \rho^\alpha r^{-\alpha} + \Lambda r^\alpha .
\end{equation*}

Noting that $c_1 M^{\alpha-n} \rho^\alpha r^{-\alpha-1} \leq c_1 M^{-n}$, then provided $M^{-1}(\bC, \alpha, k)$, $\Lambda(\bC, \alpha, k)$ are sufficiently small and $\theta_T(0, 1) \leq \theta_\bC(0) + 1/2$, $\theta_T(0, M\rho) \geq \theta_\bC(0) - 1/2$, we can prove the analogue of Theorem \ref{thm:main} with $\del B_1 \equiv \Sigma$, $\cE(u_r) \equiv \theta_{T, \rho}(0, r) - \theta_\bC(0)$, $\cE(z_r) \equiv \theta_{T_r, \rho}(0, r) - \theta_\bC(0)$, $\Lambda_+ = \Lambda$, $\Lambda_- = c_1M^{\alpha-n} \rho^\alpha$, to deduce that the function
\[
G(r) = \theta_{T}(0, r) - \theta_\bC(0) + 3\alpha^{-1} \Lambda r^\alpha - 3\alpha^{-1} c_1 M^{\alpha-n}\rho^{\alpha} r^{-\alpha}
\]
satisfies the ODE $G' \geq (\delta'/r) |G|^{1+\gamma}$ as in \eqref{eqn:main-concl1}, and to deduce the Dini estimate
\begin{align}
\int_{\bC \cap A_{1, M\rho}} |\del_r (u/r)| r^{1-n} 
&\leq c(\bC, k, \alpha)( | \theta_{T, \rho}(0, M\rho) - \theta_\bC(0)|^{(1-\gamma)/2} + |\theta_{T, \rho}(0, 1) - \theta_\bC(0)|^{(1-\gamma)/2} ) \nonumber \\
&\quad + c(\bC,k,  \alpha) (\Lambda^{(1-\gamma)/2} + M^{-n(1-\gamma)/2} ). \label{eqn:am-ann-dini}
\end{align}

We obtain the following variant of Theorem \ref{thm:am-param}.  Theorem \ref{thm:am-ann-param} below is a ``minimizing'' version of \cite[Theorem 6.3]{Ed}, and gives an alternate approach to the ``mesoscale flatness'' of \cite[Theorem 1.9]{MaNo}.
\begin{theorem}\label{thm:am-ann-param}
Let $\bC^n \subset \R^{n+k}$ be a smooth minimal cone.  Given $\eps, \alpha > 0$ there is a $\delta(\bC, k, \eps, \alpha)$ positive so that the following holds.  Let $\rho \geq 0$, and let $T$ be $(\delta, \alpha, 1)$-almost-minimizing in $A_{1, \delta \rho}$ with $\del T = 0$, and satisfying the global comparison property \eqref{eqn:am-ann-cond}, and for which
\begin{gather*}
T \llcorner A_{1, 1/2} = [G_\bC(u) \cap A_{1, 1/2}], \quad |u|_{C^1} \leq \delta \\
\theta_T(0, 1) \leq \theta_\bC(0) + \delta, \quad \theta_T(0, \rho/2) \geq \theta_\bC(0) - \delta.
\end{gather*}
Then $T \llcorner A_{1, \rho} = [G_\bC(u) \cap A_{1, \rho}]$ with $|x|^{-1} |u| + |\nabla u| \leq \eps$.  Morevoer, $u$ admits the Dini estimate
\[
\int_{\bC \cap A_{1, \rho}} |\del_r (u/r)|r^{1-n} \leq \eps.
\]
\end{theorem}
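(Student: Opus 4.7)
The plan is to follow the pattern of Theorem \ref{thm:am-param}, replacing the Dini estimate there by its annular counterpart \eqref{eqn:am-ann-dini} and using an annular extension lemma in place of Lemma \ref{lem:am-extend}. The key correspondence is that the parameter $M$ appearing in the monotonicity and symmetric epiperimetric inequalities just preceding the theorem equals $1/\delta$: almost-minimality on $A_{1, \delta\rho}$ is exactly what is needed to run the deformation-lemma comparison \eqref{eqn:am-ann-cond} at the inner boundary, and with $M = 1/\delta$ the inner error $c_1 M^{\alpha-n}\rho^\alpha$ becomes $c_1 \delta^{n-\alpha}\rho^\alpha$, which is arbitrarily small for $\delta$ small.

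I would first establish an annular extension lemma: if $T$ is $(\delta, \alpha, 1)$-almost-minimizing in $A_{1, 1/16}$, $\del T = 0$, graphical over $\bC$ on $A_{1, 1/2}$ with $C^1$-norm $\leq \delta$, and satisfies $\theta_\bC(0) - \delta \leq \theta_T(0, 1/8) \leq \theta_T(0, 1) \leq \theta_\bC(0) + \delta$, then $\spt T \cap A_{1, 1/4} = G_\bC(u) \cap A_{1, 1/4}$ with $|u|_{C^1} \leq \eps$. This is a standard compactness/contradiction argument using Lemma \ref{lem:am-compact} and the Allard-type regularity of \cite{Bo}: any subsequential limit has density identically $\theta_\bC(0)$ on $A_{1, 1/8}$, and therefore (by \eqref{eqn:am-mono} and the constancy of blow-ups at cone points) equals $[\bC]$ there, whence $C^1$ graphicality for large indices.

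With this in hand, let $\rho_* \in [\rho, 1/4]$ be the infimum of those $r$ for which the conclusion holds on $A_{1, r}$. By the extension lemma we may arrange $\rho_* \leq 1/4$. On $A_{1, \rho_*}$ the hypotheses of the annular discussion preceding the theorem are met, so the annular monotonicity and symmetric log-epiperimetric inequality hold with $\Lambda_+ = \delta$ and $\Lambda_- = c_1 \delta^{n-\alpha}\rho^\alpha$; by \eqref{eqn:am-ann-mono-eps} one also has $|\theta_T(0, r) - \theta_\bC(0)| \leq 1$ on this range. Applying the annular analogue of Theorem \ref{thm:main} (with $\cE(u_r) = \theta_T(0, r) - \theta_\bC(0)$ and $\cE(z_r) = \theta_{T_r}(0, r) - \theta_\bC(0)$) yields the Dini bound \eqref{eqn:am-ann-dini}, whose right-hand side is $\leq c(\bC, k, \alpha)(\delta^{(1-\gamma)/2} + \delta^{n(1-\gamma)/2})$.

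Combining this bound with \eqref{eqn:diff-ineq} and the outer $L^2$-closeness implies $\sup_{r \in (\rho_*, 1/2)} \|u_r\|_{L^2(\Sigma)} \leq \eps'$ for any prescribed $\eps'$, provided $\delta$ is small enough. If $\rho_* > \rho$, one applies the annular extension lemma to the rescaled current at scale $\rho_*$ (using this $L^2$ closeness and the pinched densities from \eqref{eqn:am-ann-mono-eps}) to extend graphicality strictly past $\rho_*$, a contradiction; hence $\rho_* = \rho$. The main obstacle, as in Theorem \ref{thm:am-param}, is the ordered bookkeeping of small constants: fix $\eps$, then $\eps'(\bC, k, \eps, \alpha)$ controlling the extension lemma, and only then $\delta$ small enough that both $\delta^{(1-\gamma)/2}$ and $\delta^{n(1-\gamma)/2}$ lie below $\eps'$ --- only at this point does the inner error $\Lambda_-$ become small enough for the decay-growth machinery of Theorem \ref{thm:main} to close the induction.
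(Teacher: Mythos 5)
Your proposal is correct and follows essentially the same approach as the paper: the paper's proof is the continuity/bootstrap argument of Theorem \ref{thm:ac-param} (via Theorem \ref{thm:am-param}), run with the annular extension Lemma \ref{lem:am-ann-extend}, the density-pinching \eqref{eqn:am-ann-mono-eps}, and the annular Dini estimate \eqref{eqn:am-ann-dini} --- exactly the ingredients you assemble. One small caveat on bookkeeping: the paper treats $M = M(\bC,k,\alpha,\eps)$ as a large fixed constant and requires the separate constraint $\delta \leq 1/M$ (so the inner error term with inner radius $\delta\rho$ reads $c_1 M^{\alpha-n}(\delta\rho)^{\alpha}$, a power $\delta^{n}\rho^{\alpha}$ after using $r\geq\rho\geq M\delta\rho$, rather than your $\delta^{n-\alpha}\rho^{\alpha}$); taking $M=1/\delta$ outright is a harmless reparameterization since both tend to zero with $\delta$, but the exponent should be checked before quoting it.
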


\begin{proof}
Same as Theorem \ref{thm:am-param}, using Lemma \ref{lem:am-ann-extend}, \eqref{eqn:am-ann-mono-eps}, \eqref{eqn:am-ann-dini}.
\end{proof}

\begin{lemma}\label{lem:am-ann-extend}
Let $\bC^n \subset \R^{n+k}$ be a smooth minimal cone.  Given $\eps, \alpha > 0$, there is a $\delta(\bC, \eps, \alpha, k)$ positive so that the following holds.  Let $T$ be $(\delta, \alpha, 1)$-almost-minimizing integral $n$-current in $A_{1, \delta}$ with $\del T = 0$, and suppose that
\begin{gather*}
T \llcorner A_{1, 1/2} = [G_\bC(u) \cap A_{1, 1/2}], \quad |u|_{C^1} \leq \delta, \\
\theta_T(0, 1) \leq \theta_\bC(0) + \delta, \quad \theta_T(0, 1/8) \geq \theta_\bC(0) - \delta.
\end{gather*}
Then
\[
T \llcorner A_{1, 1/4} = [G_\bC(u) \cap A_{1, 1/4}], \quad |u|_{C^1} \leq \eps.
\]
\end{lemma}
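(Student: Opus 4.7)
The plan is to prove Lemma \ref{lem:am-ann-extend} by a standard compactness-plus-regularity contradiction. Suppose the conclusion fails for some fixed $\eps > 0$: there exist $\delta_i \to 0$ and currents $T_i$ satisfying the hypotheses with $\delta_i$ in place of $\delta$, but for which the conclusion fails with $\eps$. The graphicality on $A_{1, 1/2}$ together with $\theta_{T_i}(0, 1) \leq \theta_\bC(0) + \delta_i$ and the monotonicity formula \eqref{eqn:am-mono} yield uniform mass bounds on every $A_{1, s} \Subset A_{1, \delta_i}$. By Lemma \ref{lem:am-compact} (applied on exhausting compact subsets of $B_1 \setminus \{0\}$), a subsequence converges, as currents and as Radon measures, to an area-minimizing integral $n$-current $T_\infty$ in $B_1 \setminus \{0\}$ with $\del T_\infty = 0$ there. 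The uniform bound $\|T_\infty\|(B_r) \leq C r^n$ obtained from the density hypothesis lets me extend $T_\infty$ to an area-minimizing integral current in $B_1$ with $\del T_\infty = 0$ (the extension across $\{0\}$ is standard, since $\{0\}$ is $n$-null and $\del T_\infty$ would have to be supported in $\{0\}$).

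Next I would identify $T_\infty \llcorner A_{1, 1/8} = [\bC \cap A_{1, 1/8}]$. Using $\|T_i\| \to \|T_\infty\|$ as Radon measures, for a.e. $r$ we have $\theta_{T_i}(0, r) \to \theta_{T_\infty}(0, r)$; combined with the hypothesized density bounds and the monotonicity of $r \mapsto \theta_{T_\infty}(0, r)$, this forces
\[
\theta_\bC(0) \leq \theta_{T_\infty}(0, 1/8) \leq \theta_{T_\infty}(0, r) \leq \theta_{T_\infty}(0, 1) \leq \theta_\bC(0) \quad \forall r \in [1/8, 1].
\]
Equality in the monotonicity formula then implies that $T_\infty$ is a cone (with vertex at $0$) on $A_{1, 1/8}$. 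On the outer annulus, the $C^1$-convergence $|u_i|_{C^1} \leq \delta_i \to 0$ forces $T_\infty \llcorner A_{1, 1/2} = [\bC \cap A_{1, 1/2}]$, and the cone property propagates this identification to all of $A_{1, 1/8}$.

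Finally I would close the contradiction using Bombieri's Allard-type regularity theorem \cite{Bo}. Since $\bC$ is smooth on $A_{1, 1/8}$ and $T_i \to [\bC]$ in mass there with densities converging to $\theta_\bC(0)$, for each point of $\bC \cap A_{7/8, 3/16}$ the hypotheses of \cite{Bo} are satisfied on a small ball for $i$ sufficiently large. This yields $T_i \llcorner A_{7/8, 3/16} = [G_\bC(v_i) \cap A_{7/8, 3/16}]$ with $|v_i|_{C^1} \to 0$. Patching this with the hypothesized graph on $A_{1, 1/2}$ (uniqueness of the graphing function on the overlap) gives $T_i \llcorner A_{1, 1/4} = [G_\bC(u_i) \cap A_{1, 1/4}]$ with $|u_i|_{C^1} \to 0$, contradicting our choice of sequence.

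The main obstacle I expect is the middle step: pushing the identification $T_\infty = [\bC]$ from the outer annulus $A_{1, 1/2}$ (where it is read off the graphical convergence) inward across $A_{1/2, 1/8}$ (where only the cone structure and a matching boundary value are available). This requires that the density monotonicity for $T_\infty$ passes cleanly through the origin, which is guaranteed once the extension is in place, and that the strict equality in the monotonicity formula really does upgrade $T_\infty \llcorner A_{1, 1/8}$ to a cone — a standard fact for area-minimizing currents but one whose verification must be explicit here.
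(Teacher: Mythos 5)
Your proposal is correct and is exactly the argument the paper has in mind: the proof of Lemma \ref{lem:am-ann-extend} is deferred to that of Lemma \ref{lem:am-extend}, which is the one-line ``straightforward contradiction argument, using Lemma \ref{lem:am-compact}, \cite{Bo} and taking $\Lambda, \delta \to 0$,'' and your write-up (compactness on exhausting annuli, removable singularity at the origin, density squeeze identifying the limit with $[\bC]$, and Bombieri--Allard regularity to close the contradiction) fills in precisely those steps, including the extension across $\{0\}$ which is the one genuine extra subtlety hidden in the paper's ``same as.''  One minor inaccuracy: you invoke \eqref{eqn:am-mono} to get uniform mass bounds, but that monotonicity formula is not available for a current defined only in an annulus; fortunately no monotonicity is needed, since the hypothesis gives directly $\|T_i\|(A_{1,\delta_i}) = \omega_n\,\theta_{T_i}(0,1) \leq \omega_n(\theta_\bC(0)+1)$.
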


\begin{proof}
Same as Lemma \ref{lem:am-extend}.
\end{proof}


\begin{thebibliography}{DLSS17}

\bibitem[AC81]{AlCa}
H. W. Alt, L. A. Caffarelli.
\newblock Existence and regularity for a minimum problem with free boundary. 
\newblock {\em J. Reine Angew. Math.}, 325:105–144, 1981.

\bibitem[Bom82]{Bo}
E.~Bombieri.
\newblock Regularity theory for almost minimal currents.
\newblock {\em Arch. Rat. Mech. Anal.}, 78:99–--130, 1982.

\bibitem[CSV18]{cospve1}
M. Colombo, L. Spolaor, B. Velichkov.
\newblock A logarithmic epiperimetric inequality for the obstacle problem. 
\newblock {\em Geom. Funct. Anal.}, 28:1029–1061, 2018.


\bibitem[CSV20]{cospve}
M. Colombo, L. Spolaor, B. Velichkov.
\newblock On the asymptotic behavior of the solutions to parabolic variational inequalities. 
\newblock {\em J. Reine Angew. Math.}, 768:149–182, 2020.



\bibitem[Dam02]{Da}
M.~Dambrine.
\newblock On variations of the shape hessian and sufficient conditions for the
  stability of critical shapes.
\newblock {\em Rev. R. Acad. Cienc. Exactas Fis. Nat. Ser. A Mat. RACSAM}, 96,
  2002.

\bibitem[DLSS17]{DeSpSp}
C. De~Lellis, E. Spadaro, L. Spolaor.
\newblock Uniqueness of tangent cones for two-dimensional almost-minimizing
  currents.
\newblock {\em Comm. Pure. Appl. Math.}, 70:1402--1421, 2017.

\bibitem[DSJS21]{DeJeSh}
D. De~Silva, D. Jerison, H. Shahgholian.
\newblock Inhomogeneous global minimizers to the one-phase free boundary
  problem.
\newblock 2021.
\newblock arXiv:2106.14576.

\bibitem[Ede21]{Ed}
N. Edelen.
\newblock Degeneration of $7$-dimensional minimal hypersurfaces which are
  stable or have bounded index.
\newblock 2021.
\newblock arXiv:2103.13563.

\bibitem[ES19]{EdSp}
N. Edelen, E. Spolaor.
\newblock Regularity of minimal surfaces near quadratic cones.
\newblock 2019.
\newblock arXiv:1910.00441.

\bibitem[ESV19]{EnSpVe2}
M. Engelstein, L. Spolaor, B. Velichkov.
\newblock (Log-)epiperimetric inequality and regularity over smooth cones for
  almost area-minimizing currents.
\newblock {\em Geometry \& Topology}, 23:513--540, 2019.

\bibitem[ESV20]{EnSpVe}
M. Engelstein, L. Spolaor, B. Velichkov.
\newblock Uniqueness of the blowup at isolated singularities for the
  Alt-Caffarelli functional.
\newblock {\em Duke Math. J.}, 169:1541--1601, 2020.

\bibitem[ESV22]{EdSpVe}
N. Edelen, L. Spolaor, B. Velichkov.
\newblock A strong maximum principle for minimizers of the one-phase Bernoulli
  problem.
\newblock 2022.
\newblock arXiv:2205.00401.

\bibitem[HS85]{HaSi}
R. Hardt, L. Simon.
\newblock Area minimizing hypersurfaces with isolated singularities.
\newblock {\em J. Reine Angew. Math.}, 362:102--129, 1985.

\bibitem[JS15]{JeSa}
D. Jerison, O. Savin. 
\newblock Some remarks on stability of cones for the one-phase free boundary problem. 
\newblock {\em Geom. Funct. Anal.}, 25(4):1240–1257, 2015.

\bibitem[MN22]{MaNo}
F. Maggi, M. Novack.
\newblock Isoperimetric residues and a mesoscale flatness criterion for
  hypersurfaces with bounded mean curvature.
\newblock 2022.
\newblock arXiv:2205.02951.

\bibitem[Rei64]{Re}
E. R. Reifenberg.
\newblock An epiperimetric inequality related to the analyticity of minimal surfaces.
\newblock {\em Ann. of Math. (2)}, 80:1–14, 1964.

\bibitem[Sim83]{Sim}
L. Simon.
\newblock {\em Lectures on geometric measure theory}, volume~3 of {\em
  Proceedings of the Centre for Mathematical Analysis, Australian National
  University}.
\newblock Australian National University Centre for Mathematical Analysis,
  Canberra, 1983.

\bibitem[Sim87]{Si:max}
L. Simon.
\newblock A strict maximum principle for area minimizing hypersurfaces.
\newblock {\em J. Differential Geom.}, 26:327--335, 1987.

\bibitem[Sim93]{Simon1}
L. Simon.
\newblock Cylindrical tangent cones and the singular set of minimal
  submanifolds.
\newblock {\em J. Differential Geom.}, 38(3):585--652, 1993.

\bibitem[Sim21]{simon:liousville}
L. Simon.
\newblock A {Liouville-type} theorem for stable minimal hypersurfaces.
\newblock {\em Ars Inveniendi Analytica}, 5:1--35, 2021.

\bibitem[SV19]{SpVe}
L. Spolaor, B. Velichkov.
\newblock An epiperimetric inequality for the regularity of some free boundary
  problems: the 2-dimensional case.
\newblock {\em Comm. Pure Appl. Math.}, 72:375--421, 2019.

\bibitem[SV21]{SpVe2}
L. Spolaor, B. Velichkov.
\newblock On the logarithmic epiperimetric inequality for the obstacle problem.
\newblock {\em Math. Eng.}, 3 (1):1--42, 2021.

\bibitem[Tay76]{Ta}
J. E. Taylor. 
\newblock The structure of singularities in solutions to ellipsoidal variational problems with
constraints in $\R^3$. 
\newblock {\em Ann. of Math. (2)}, 103(3):541–546, 1976.

\bibitem[Wei99a]{We}
G. S. Weiss. 
\newblock A homogeneity improvement approach to the obstacle problem.
\newblock {\em Invent. Math.}, 138(1):23–50, 1999.

\bibitem[Wei99b]{We2}
G. S. Weiss.
\newblock Partial regularity for a minimum problem with free boundary. 
\newblock {\em J. Geom. Anal.}, 9(2):317–326, 1999.

\bibitem[Whi83]{Wh}
B. White.
\newblock Tangent cones to two-dimensional area-minimizing integral currents are unique. 
\newblock {\em Duke Math. J.}, 50(1):143–160, 1983.

\end{thebibliography}
\end{document}